\numberwithin{equation}{section}
\newtheorem{thm}{\indent \sc Theorem}[section]
\newtheorem{prop}[thm]{\indent \sc Proposition}
\newtheorem{cor}[thm]{\indent \sc Corollary}
\newtheorem{lem}[thm]{\indent \sc Lemma}
\newtheorem{defi}[thm]{\indent \sc Definition}
\newtheorem{rem}[thm]{\indent \sc Remark}
\newcommand{\spec}{\operatorname{Spec}}
\newcommand{\et}{\mathrm{\acute{e}t}}
\newcommand{\Nis}{\mathrm{Nis}}
\newcommand{\Zar}{\mathrm{Zar}}
\newcommand{\HO}{\operatorname{H}}
\newcommand{\HB}{\operatorname{H}_{\mathrm{B}}}
\newcommand{\BO}{\operatorname{C}}
\begin{document}
\author{Makoto Sakagaito}
\title{On a generalized Brauer group in mixed characteristic cases, II}
\date{}
\maketitle
\begin{center}
Indian Institute of Technology Gandhinagar
\\
\textit{E-mail address}: sakagaito43@gmail.com
\end{center}
\begin{abstract}
As an extension of an author's previous paper (\cite[Theorem 1.1]{Sak}), 
we prove the Gersten-type conjecture for 
the mod $p$ \'{e}tale motivic cohomology
over a local ring of a smooth scheme over the spectrum of a discrete valuation ring 
of mixed characteristic $(0, p)$.
We also prove the $\mathbb{P}^{1}$-homotopy invariance 
for the generalized Brauer group
which is a generalization of a result due to
Saltman (\cite{Salt}) 
in mixed characteristic cases.    
\end{abstract}
\section{Introduction}

%
Let $A$ be a regular local ring with $\operatorname{dim}(A)\leq 1$, 
$X$ an equi-dimensional scheme over $\spec A$,
$\mathbb{Z}(n)_{\et}$ and $\mathbb{Z}(n)_{\Zar}$ be 
Bloch’s cycle complex 
for the \'{e}tale and Zariski topology, respectively (cf.\cite[p.779]{Ge}).
Denote
$\mathbb{Z}/m(n)_{\et}=\mathbb{Z}(n)_{\et}\otimes\mathbb{Z}/m\mathbb{Z}$
for a positive integer $m$.

\vspace{2.0mm}

Suppose that $X$ is smooth over $\spec(A)$. 
Let $R$ be the local ring $\mathcal{O}_{X, y}$ of $X$ at $y$ and
$\operatorname{Spec}(R)^{(i)}$ the set of points $x\in \spec(R)$ of
codimension $i$. Let us denote
\begin{equation*}
\HO^{t}_{x}(
R_{\et}, \mathbb{Z}/m(n)
)
:=
\displaystyle\lim_{\substack{\to \\ x\in U}}
\HO^{t}_{\bar{\{x\}}\cap U}
(U,
\mathbb{Z}/m(n)
)
\end{equation*}
where $U$ runs through open neighborhoods of $x$ in $\spec(R)$.
Then there is a sequence
\begin{align}\label{InBO}
0&\to 
\HO^{s}_{\et}(R, \mathbb{Z}/m(n))
\to
\bigoplus_{x\in \spec(R)^{(0)}}
\HO^{s}_{x}(R_{\et}, \mathbb{Z}/m(n))
\nonumber 
\\
&\to \bigoplus_{x\in \spec(R)^{(1)}}
\HO^{s+1}_{x}(R_{\et}, \mathbb{Z}/m(n)) 
\to\cdots
\end{align}
(cf. \cite[Part 1, (1.3)]{C-H-K} and Definition \ref{DefBO}) and it is known (or it is directly proved by known results) 
that the sequence (\ref{InBO}) is exact
in the following cases:
\begin{thm}\upshape\label{Intref}
Let the notations be the same as above. 
Let $D^{b}(X, \mathbb{Z}/m\mathbb{Z})$ be the derived category of bounded complexes of
\'{e}tale $\mathbb{Z}/m\mathbb{Z}$-sheaves on $X_{\et}$.
Then,
\begin{itemize}
\item[(a)] (cf. \cite[Theorem 1.5]{Ge-L2}, \cite[Corollary 2.2.2]{C-H-K}) In the case where $A$ is a field and $l$
is prime to $\operatorname{char}(A)$, then we
have an isomorphism
\begin{equation*}
\mathbb{Z}/l(n)_{\et}
\simeq
\mu_{l}^{\otimes n}
\end{equation*}
in $D^{b}(X_{\et}, \mathbb{Z}/l\mathbb{Z})$ and the sequence (\ref{InBO}) is exact for any $s$. Here $\mu_{l}$ is the sheaf of $l$-th roots of unity.
\item[(b)] (cf. \cite[p.787, \S 5, (12)]{Ge}, \cite[p.600, Theorem 4.1]{Sh}) In the case where $A$ is a field of positive
characteristic $p>0$,
then we have an isomorphism
\begin{equation*}
\mathbb{Z}/p^{r}(n)_{\et} 
\simeq
W_{r}\Omega^{n}_{X, \log}[-n]
\end{equation*}
in $D^{b}(X_{\et}, \mathbb{Z}/p^{r}\mathbb{Z})$ 
and the sequence (\ref{InBO}) is exact 
for $m=p^{r}$.
Here $W_{r}\Omega^{n}_{X, \log}$ 
is the logarithmic de Rham-Witt sheaf of $X$
(cf. \cite{I}, \cite[p.575, Definition 2.6]{Sh}).
\item[(c)] (cf. \cite[p.524, Conjecture 1.4.1 (1)]{SaP},
Proposition \ref{PTT}) 
In the case where $A$ is a discrete valuation ring of mixed-characteristic $(0, p)$, 
then we have an isomorphism 
\begin{equation}\label{qiTT}
\mathbb{Z}/p^{r}(n)_{\et}
\simeq  
\mathfrak{T}_{r}(n)_{X}
\end{equation}
in $D^{b}(X_{\et}, \mathbb{Z}/p^{r}\mathbb{Z})$ 
and
the sequence (\ref{InBO}) is exact for $s\leq n$ and $m=p^{r}$.
Here $\mathfrak{T}_{r}(n)_{X}$ is the $p$-adic Tate twist of $X$
(cf. \cite[p.537--538, Definition 4.2.4]{SaP},
\cite[p.187, Remark 3.7]{SaR}).
\item[(d)] (cf. \cite[p.30, Theorem 1.1]{Sak}) In the case where $A$ is a discrete valuation ring
of mixed characteristic $(0, p)$,
the sequence (\ref{InBO}) is exact at the first two
terms for $s=n+1$ and $m=p^{r}$.
\end{itemize}
\end{thm}
The exactness of the sequence (\ref{InBO}) in the case of Theorem \ref{Intref} (c)
directly follows from
the Gersten-type conjecture for the motivic cohomology (\cite[p.774, Theorem 1.2.5]{Ge}),
the Beilinson-Lichtenbaum conjecture (\cite[p.774, Theorem 1.2.2]{Ge}, \cite{V})
and the purity for $\mathfrak{T}_{r}(n)$ 
(cf. \cite[p.540, Theorem 4.4.7]{SaP}, \cite[p.187, Remark 3.7]{SaR}).
See Proposition \ref{PTT}.

\vspace{2.0mm}

One of objective of this paper is to extend Theorem \ref{Intref} (d). More precisely, we show that
the sequence (\ref{InBO}) is exact for $s\geq n+1$ and $m=p$ in the case  where
$A$ is a discrete valuation ring of mixed characteristic $(0, p)$
and $A$ contains $p$-th roots of unity.
In this paper, we call the exactness of the sequence (\ref{InBO}) for $m=p$ as the Gersten-type
conjecture for the mod $p$ \'{e}tale motivic cohomology.

In order to prove this result, it is effective to know about the object
$\HO_{x}^{t}(R_{\et}, \mathbb{Z}/p(n))$ 
for a point $x$ of the closed fiber of $\spec(R)$. 
Since we have the isomorphism (\ref{qiTT}), it suffices to know about the object
\begin{math}
\HO_{x}^{t}(X_{\et}, \mathfrak{T}_{r}(n))
\end{math}
for a point $x$ of the closed fiber of $X$.

In \S\ref{Ttexact}, we investigate the object
$\HO^{t}_{x}(\mathfrak{X}_{\et}, \mathfrak{T}_{r}(n))$
in the case where $\mathfrak{X}$ is a regular semistable family over 
$\spec(A)$ (cf. \cite[p.526, 1.11]{SaP}), 
that is, $\mathfrak{X}$ is  everywhere \'{e}tale locally isomorphic to
\begin{equation*}
\spec(
A[T_{0}, T_{1}, \cdots, T_{N}]/
(T_{0}T_{1}\cdots T_{a}-\pi)
)    
\end{equation*}
for some integer $a$ with 
$0\leq a\leq N=\operatorname{dim}(\mathfrak{X}/A)$,
where $\pi$ is a prime element of $A$.
In \S\ref{Ttexact}, we show the followings:

\begin{thm}\label{IntTh1}\upshape(Theorem \ref{iW})
Let $A$ be a discrete valuation ring of mixed characteristic $(0, p)$, 
$\mathfrak{X}$ a regular semistable family
over $\spec(A)$
and
$\iota: Y\to \mathfrak{X}$ 
the inclusion of
the closed fiber of $\mathfrak{X}$.
Let 
$n\geq 0$,
$s\geq 1$ 
be integers,
$\mathfrak{X}^{(s)}$ the set of points on $\mathfrak{X}$ of codimension $s$
and $x\in Y\cap \mathfrak{X}^{(s)}$.

Then we have an isomorphism
\begin{equation*}
\HO_{x}^{n+s}(
Y_{\et}, 
\iota^{*}\mathfrak{T}_{r}(n))   
\xrightarrow{\simeq}
\HO_{x}^{s}(
Y_{\et},
\lambda_{r}^{n}
)
\end{equation*}
for any integer $r>0$.
Here 
\begin{equation*}
\lambda_{Y, r}^{n}
:=
\operatorname{Im}\Bigl(
d\log:
(\mathbb{G}_{m, Y})^{\otimes n}
\to
\bigoplus_{y\in Y^{(0)}}i_{x *}
W_{r}\Omega^{n-1}_{y, \log}
\Bigr)
\end{equation*}
and $i_{y}$ denotes the canonical map $y\hookrightarrow Y$
for a point $y\in Y$. 
%
Moreover, we have an isomorphism
\begin{equation}
\HO_{x}^{n+t}(
Y_{\et},
\iota^{*}\mathfrak{T}_{r}(n)
)
\simeq 
0
\end{equation}
for any integers $r>0$ and $t>s$.
\end{thm}

In the case where $Y$ is a smooth variety, $\lambda^{n}_{r}$ agrees with $W_{r}\Omega^{n}_{Y, \log}$ 
(\cite[p.575, Definition 2.6]{Sh}). 
Let $j: U\hookrightarrow \mathfrak{X}$ be the open complement
$\mathfrak{X}\setminus Y$.
By using Theorem \ref{IntTh1}, the distinguished triangle
\begin{equation*}
\cdots\to 
j_{!}\mu_{p^{r}}^{\otimes n}
\to
\mathfrak{T}_{r}(n)
\to
\iota_{*}\iota^{*}\mathfrak{T}_{r}(n)
\to\cdots
\end{equation*}
(cf. \cite[pp.75--76, II, Remark 3.13]{M} and \cite[p.537, Definition 4.2.4]{SaP})
induces the following exact sequences:
\begin{thm}\upshape(Theorem \ref{exijS} and Corollary \ref{exmut})\label{Intext2}
Let the notations be the same as in Theorem \ref{IntTh1} and
$j: U\hookrightarrow \mathfrak{X}$ the open complement
$\mathfrak{X}\setminus Y$. 
Then we have an exact sequence
\begin{align*}
0
\to
\HO_{x}^{s}(
Y_{\et},
\lambda_{r}^{n}
)
\to
\HO_{x}^{n+s+1}(
\mathfrak{X}_{\et},
j_{!}\mu_{p^{r}}^{\otimes n}
)
\to
\HO_{x}^{n+s+1}(
\mathfrak{X}_{\et},
\mathfrak{T}_{r}(n)
)
\to 0
\end{align*}
and an isomorphism
\begin{equation*}
\HO_{x}^{n+t}(
\mathfrak{X}_{\et}, j_{!}\mu_{p^{r}}^{\otimes n}
)
\xrightarrow{\simeq}
\HO_{x}^{n+t}(
\mathfrak{X}_{\et},
\mathfrak{T}_{r}(n)
)
\end{equation*}
for $t\geq s+2$. Moreover, we have an exact sequence
\begin{align*}
0\to 
\HO_{x}^{s}(
Y_{\et},
\lambda_{1}^{n}
)
\to
\HO_{x}^{n+s+1}(
\mathfrak{X}_{\et},
\mathfrak{T}_{1}(n-1)
)
\to
\HO_{x}^{n+s+1}(
\mathfrak{X}_{\et},
\mathfrak{T}_{1}(n)
)
\to 0
\end{align*}
and an isomorphism
\begin{equation*}
\HO_{x}^{n+t}(
\mathfrak{X}_{\et}, j_{!}\mu_{p}^{\otimes n}
)
\xrightarrow{\simeq}
\HO_{x}^{n+t}(
\mathfrak{X}_{\et},
\mathfrak{T}_{1}(n)
)
\end{equation*}
for $t\geq s+2$
in the case where $A$ contains $p$-th roots of unity.
\end{thm}
By using the coniveau spectral sequence (cf. Theorem \ref{sp}), Theorem \ref{Intref} and Theorem \ref{Intext2}, we are able to
prove the exactness of the sequence (\ref{Intref}) in the case where $R$ is a henselian local ring as follows.
\begin{thm}\label{Intpre}\upshape (Theorem \ref{premain}) 
Let $A$ be a discrete valuation ring of mixed characteristic $(0, p)$ and  
$\mathfrak{X}$ a smooth scheme over $\spec(A)$.
Suppose that
$A$ contains $p$-th roots of unity. Let $R$ be the henselization of the local ring 
$\mathcal{O}_{\mathfrak{X}, x}$ of $\mathfrak{X}$ at $x$. 
Then the sequence (\ref{InBO}) is exact for $s\geq n+1$ and $m=p$.
\end{thm}

In \S\ref{SKH}, we prepare to extend Theorem \ref{Intpre}.
In this section, we prove the following:
\begin{prop}\label{Intsur}\upshape(Proposition \ref{Bsur})
Let $A$ be a discrete valuation ring of mixed characteristic $(0, p)$ and $\pi$ a prime element of $A$.
Let $\mathfrak{X}$ be a smooth scheme over $\spec(A)$ and 
$R$ the local ring $\mathcal{O}_{\mathfrak{X}, x}$ of $\mathfrak{X}$ at
a point $x$ of the closed fiber of $\mathfrak{X}$. Then the homomorphism
\begin{equation*}
\HO^{n+1}_{\et}(
R,
\mathbb{Z}/p(n)
) 
\to
\HO^{n+1}_{\et}(
R/(\pi),
\mathbb{Z}/p(n)
)
\end{equation*}
is surjective for integers $n\geq 0$.
\end{prop}

In \S\ref{Sahe}, we extend Theorem \ref{Intpre} as follows.

\begin{thm}\label{IntGCL}\upshape(Theorem \ref{mainGC})
Let $A$ be a discrete valuation ring of mixed characteristic $(0, p)$,  
$\mathfrak{X}$ a smooth scheme over $\spec(A)$
and $R$ the local ring $\mathcal{O}_{\mathfrak{X}, x}$ of $\mathfrak{X}$ at $x\in \mathfrak{X}$.
Suppose that $A$ contains $p$-th roots of unity. Then
the sequence (\ref{InBO}) is exact for $s\geq n+1$ and $m=p$.
\end{thm}

In \S\ref{Sahe}, we first prove that Theorem \ref{IntGCL} holds if the homomorphism
\begin{equation}\label{InRk}
\HO^{n+k}_{\et}(R, \mathbb{Z}/p(n))
\to
\HO^{n+k}_{\et}(k(R), \mathbb{Z}/p(n))
\end{equation}
is injective for $k\geq 2$ (see Proposition \ref{Asin}). Here $k(R)$ is the fraction field of $R$. 
At the end of this section, we prove the injectivity of the homomorphism (\ref{InRk}) by using
the theory of the $p$-adic vanishing cycle (cf. \S \ref{RVT}) and Proposition \ref{Intsur}.

In \S\ref{SP1h}, we consider global cases. In this section, we prove the following:
\begin{thm}\upshape(Theorem \ref{P1h} and Corollary \ref{HBP1})\label{IntHp}
Let $A$ be a discrete valuation ring 
of mixed characteristic $(0, p)$.
Then we have an isomorphism
\begin{equation*}
\HO_{\Zar}^{s}
\left(
\mathbb{P}^{m}_{A},
\tau_{\geq n+1}
R\epsilon_{*}\mathbb{Z}/p^{r}(n)
\right)
=
\bigoplus_{j=0}^{s-n-1}
\HO_{\et}^{s-2j}
(
A, 
\mathbb{Z}/p^{r}(n-j)
)
\end{equation*}
for $n+1\leq s\leq n+m+1$ and any $r>0$ where
$\epsilon: X_{\et}\to X_{\Zar}$ is the canonical map of sites
and $\epsilon_{*}$ is the forgetful functor.

In particular, we have an isomorphism
\begin{equation}\label{Ppbt}
\HB^{n}(\mathbb{P}^{s}_{A})_{p^{r}}
\simeq
\HB^{n}(A)_{p^{r}}
=
\HO^{n}_{\et}(A, \mathbb{Z}/p^{r}(n-1))
\end{equation}
for any positive integers $n$, $s$ and $r$. 
Here $\HB^{*}(X)$ is a generalized Brauer group of
a smooth scheme $X$ (cf. \cite[p.40, Definition 3.1]{Sak}) and $\HB^{*}(X)_{m}$
is the $m$-torsion subgroup of
$\HB^{*}(X)$.
\end{thm}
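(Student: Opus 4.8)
The plan is to deduce the displayed formula from the projective bundle formula for both the Zariski (motivic) and the \'{e}tale realizations of $\mathbb{Z}/p^{r}(n)$, organized around the Beilinson--Lichtenbaum truncation triangle. With mod $p^{r}$ coefficients the isomorphism $\tau_{\leq n+1}R\epsilon_{*}\mathbb{Z}(n)_{\et}\simeq\mathbb{Z}(n)_{\Zar}$ improves to $\tau_{\leq n}R\epsilon_{*}\mathbb{Z}/p^{r}(n)\simeq\mathbb{Z}/p^{r}(n)_{\Zar}$, since $\mathbb{Z}(n)_{\Zar}$ is concentrated in degrees $\leq n$, so the unit of adjunction fits into a distinguished triangle
\[
\mathbb{Z}/p^{r}(n)_{\Zar}\to R\epsilon_{*}\mathbb{Z}/p^{r}(n)\to\tau_{\geq n+1}R\epsilon_{*}\mathbb{Z}/p^{r}(n)\xrightarrow{+1}
\]
on $\mathbb{P}^{m}_{A}$. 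First I would push this triangle forward along $\pi\colon\mathbb{P}^{m}_{A}\to\spec A$ and invoke the two projective bundle formulas: the motivic one gives $R\pi_{\Zar *}\mathbb{Z}/p^{r}(n)_{\Zar}\simeq\bigoplus_{j=0}^{m}\mathbb{Z}/p^{r}(n-j)_{\Zar}[-2j]$, while $\mathbb{Z}/p^{r}(n)_{\et}\simeq\mathfrak{T}_{r}(n)$ together with the projective bundle formula for the $p$-adic Tate twist gives $R\pi_{\et *}\mathbb{Z}/p^{r}(n)_{\et}\simeq\bigoplus_{j=0}^{m}\mathbb{Z}/p^{r}(n-j)_{\et}[-2j]$; combined with $R\pi_{\Zar *}R\epsilon_{*}=R\epsilon_{*}R\pi_{\et *}$ (functoriality of pushforward for the commuting square of sites) this decomposes the middle vertex as well.

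The crucial point is that these two decompositions are compatible with the comparison map, since both are effected by cup product with powers of $c_{1}(\mathcal{O}(1))$ after $\pi^{*}$ and the adjunction unit carries the motivic Chern class to the \'{e}tale one; I expect this compatibility at the level of complexes to be the main obstacle, along with securing the Tate-twist projective bundle formula in this mixed-characteristic setting. Granting it, the triangle splits as the direct sum over $0\leq j\leq m$ of its weight-shifted constituents, and the third vertex becomes
\[
R\pi_{\Zar *}\tau_{\geq n+1}R\epsilon_{*}\mathbb{Z}/p^{r}(n)\simeq\bigoplus_{j=0}^{m}\bigl(\tau_{\geq n-j+1}R\epsilon_{*}\mathbb{Z}/p^{r}(n-j)\bigr)[-2j],
\]
using that truncation commutes with finite direct sums and shifts and that the cone of $\mathbb{Z}/p^{r}(w)_{\Zar}\to R\epsilon_{*}\mathbb{Z}/p^{r}(w)$ is $\tau_{\geq w+1}R\epsilon_{*}\mathbb{Z}/p^{r}(w)$.

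It then remains to compute $\HO^{t}_{\Zar}(\spec A,\tau_{\geq w+1}R\epsilon_{*}\mathbb{Z}/p^{r}(w))$ for $w=n-j$ and $t=s-2j$. Here I would use that $\spec A$ is a two-point space, so that Zariski sheaf cohomology vanishes in positive degrees; the hypercohomology spectral sequence degenerates and identifies this group with $\Gamma(\spec A,\mathcal{H}^{t}(R\epsilon_{*}\mathbb{Z}/p^{r}(w)))$, which by the same degeneration equals $\HO^{t}_{\et}(A,\mathbb{Z}/p^{r}(w))$ when $t\geq w+1$ and vanishes otherwise. The nonzero contributions are exactly those $j$ with $s-2j\geq(n-j)+1$, i.e. $0\leq j\leq s-n-1$, and the constraint $j\leq m$ is automatic for $s\leq n+m+1$; summing them yields precisely $\bigoplus_{j=0}^{s-n-1}\HO^{s-2j}_{\et}(A,\mathbb{Z}/p^{r}(n-j))$, which is the asserted formula.

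For the final isomorphism I would specialize the formula just proved, replacing $n$ by $n-1$ and $m$ by $s$, so that the relevant triangle has third vertex $\tau_{\geq n}R\epsilon_{*}\mathbb{Z}/p^{r}(n-1)$ and the formula reads $\HO^{t}_{\Zar}(\mathbb{P}^{s}_{A},\tau_{\geq n}R\epsilon_{*}\mathbb{Z}/p^{r}(n-1))=\bigoplus_{j=0}^{t-n}\HO^{t-2j}_{\et}(A,\mathbb{Z}/p^{r}(n-1-j))$ for $n\leq t\leq n+s$. Evaluating in the smallest admissible degree $t=n$ collapses the index range $0\leq j\leq 0$ to the single term $j=0$, giving $\HO^{n}_{\Zar}(\mathbb{P}^{s}_{A},\tau_{\geq n}R\epsilon_{*}\mathbb{Z}/p^{r}(n-1))\simeq\HO^{n}_{\et}(A,\mathbb{Z}/p^{r}(n-1))$. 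Unwinding the definition of the generalized Brauer group (compatibly with $\HB^{n}(R)_{p^{r}}=\HO^{n}_{\et}(R,\mathbb{Z}/p^{r}(n-1))$ for local $R$ from \cite[Proposition 3.4]{Sak}), which presents $\HB^{n}(X)_{p^{r}}$ as this degree-$n$ Zariski hypercohomology of $\tau_{\geq n}R\epsilon_{*}\mathbb{Z}/p^{r}(n-1)$, this identifies $\HB^{n}(\mathbb{P}^{s}_{A})_{p^{r}}\simeq\HB^{n}(A)_{p^{r}}=\HO^{n}_{\et}(A,\mathbb{Z}/p^{r}(n-1))$, as claimed.
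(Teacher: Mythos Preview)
Your proposal is correct and takes essentially the same approach as the paper: both arguments pivot on the Beilinson--Lichtenbaum truncation triangle together with the projective bundle formulas for $\mathbb{Z}/p^{r}(n)_{\Zar}$ (from \cite{Ge}) and for $\mathfrak{T}_{r}(n)$ (from \cite{SaR}), and both must check that the two decompositions are compatible under the comparison map. The paper packages this as an induction on $m$ via the hyperplane Gysin map, proving the compatibility you flag as the main obstacle through the projection formula (Proposition~\ref{GyPr}, Corollary~\ref{ProjC}) and the identification $(i_{m})_{*}(1)=\xi$ (Lemma~\ref{GyC}), while you instead push the triangle forward to $\spec A$ and exploit the vanishing of higher Zariski cohomology on a local scheme; these are equivalent reformulations of the same computation.
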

In this paper, we call  the isomorphism (\ref{Ppbt}) 
the $\mathbb{P}^{1}$-homotopy invariance for 
the generalized Brauer group.
Let $X$ be a smooth scheme over the spectrum of a regular local
ring $A$ with $\operatorname{dim}(A)\leq 1$
and $k(X)$ 
the ring of rational functions on $X$.
By \cite[p.51, Theorem 4.6]{Sak}, we have the exact sequence
\begin{equation*}
0\to
\HB^{n}(X)
\to
\operatorname{Ker}\left(
\HB^{n}(k(X))
\to
\prod_{x\in X^{(1)}}
\HB^{n}(k(\mathcal{O}_{X, x}^{sh}))
\right)
\to
\bigoplus_{x\in X^{(1)}}
\HB^{n-1}(\kappa(x))
\end{equation*}
where $\mathcal{O}_{X, x}^{sh}$ is the strict
henselization of a local ring $\mathcal{O}_{X, x}$, 
$\kappa(x)$ is the residue field of $x\in X$
and 
$X^{(1)}$ is the set of points of codimension $1$. 
Moreover, $\HB^{2}(X)$ is the Brauer group
$\operatorname{Br}(X)$ of $X$ by \cite[p.41, Proposition 3.3]{Sak}. 
So we can regard the isomorphism (\ref{Ppbt}) as a generalization of a result due to
Saltman
(\cite[p.55, Proposition 1.1]{Salt} and \cite[p.57, Proposition 1.7]{Salt}) 
in mixed characteristic cases.
In the equi-characteristic cases,
Auel-Bigazzi–B\"{o}hning–Graf von Bothmer (\cite[p.2480, Theorem 1.1]{A-B-B-G}) proved 
that a smooth proper universally
$\operatorname{CH}_{0}$-trivial variety over a field has
universally trivial Brauer group.
Moreover, this result is generalized for the mod $p$ unramified cohomology by Otabe (\cite[p.216, Corollary 1.3]{O}).

As an application of Theorem \ref{Intpre} 
and Theorem \ref{IntHp}, we have the following:
\begin{cor}\upshape(Corollary \ref{Ppbm2})
Let $A$ be a henselian discrete valuation ring
of mixed characteristic $(0, p)$. 
Suppose that $A$ contains $p$-th roots of unity.
Then we have an isomorphism
\begin{equation*}
\HO_{\Nis}^{s}\left(
\mathbb{P}^{m}_{A}, R^{n+1}\alpha_{*}\mathbb{Z}/p(n)
\right)
\simeq
\HO_{\et}^{n-s+1}
(A, \mathbb{Z}/p(n-s))
\end{equation*}
for $0\leq s\leq m$ where 
$\alpha: (\mathbb{P}_{A}^{m})_{\et}\to(\mathbb{P}_{A}^{m})_{\Nis}$ 
is the canonical map of sites
and $\alpha_{*}$ is the forgetful functor.

Moreover, the sequence
\begin{align*}
0\to&    
\bigoplus_{x\in (\mathbb{P}_{A}^{m})^{(0)}}
\HO^{n+2}_{x}\left(
(\mathbb{P}^{m}_{A})_{\et}, 
\mathbb{Z}/p(n)
\right)
\to
\bigoplus_{x\in (\mathbb{P}_{A}^{m})^{(1)}}
\HO^{n+3}_{x}\left(
(\mathbb{P}^{m}_{A})_{\et}, \mathbb{Z}/p(n)
\right) \nonumber \\
\to&
\bigoplus_{x\in (\mathbb{P}_{A}^{m})^{(2)}}
\HO^{n+4}_{x}\left(
(\mathbb{P}^{m}_{A})_{\et}, \mathbb{Z}/p(n)
\right)
\to\cdots
\end{align*}
is exact.
\end{cor}
\subsection*{Notations}
For a scheme $X$,
$X^{(i)}$ denotes the set of points
of codimension $i$ and
$k(X)$ denotes the ring of rational functions on $X$.
For a point $x\in X$, $\kappa(x)$ denotes the residue field
of $x$, $\mathcal{O}_{X, x}$ denotes the local ring of $X$ at $x\in X$
and
$\mathcal{O}^{h}_{X, x}$ denotes the henselization of $\mathcal{O}_{X, x}$.
$X_{\et}$, $X_{\Nis}$ and $X_{\Zar}$
denote the category of \'{e}tale schemes over a scheme $X$
equipped with
the \'{e}tale, Nisnevich and Zariski topology, respectively.
Let $\epsilon: X_{\et}\to X_{\Zar}$ be the canonical map of sites
and $\epsilon_{*}$ the forgetful functor.

Let $\mathbb{Z}(n)_{\et}$ (resp. $\mathbb{Z}(n)_{\Zar}$)
be Bloch’s cycle complex for
\'{e}tale (resp. Zariski) topology and 
$\mathbb{Z}/m(n)_{t}=\mathbb{Z}(n)_{t}\otimes\mathbb{Z}/m\mathbb{Z}$
for $t\in \{\et, \Zar\}$ and a positive integer $m$.
For an integer $m>0$, $\mu_{m}$ denotes the sheaf of $m$-th
roots of unity.
For a smooth scheme $Y$ over a field of positive characteristic
$p>0$, $\nu^{n}_{Y, r}=W_{r}\Omega_{Y, \log}^{n}$ denotes
the logarithmic de Rham-Witt sheaf
(cf. \cite{I}, \cite[p.575, Definition 2.6]{Sh}).
\section{Preliminary}\label{Prel}
In this section, we review results which is used in this paper.

\subsection{The coniveau spectral sequence}
In \cite{C-H-K}, the coniveau spectral sequence 
\begin{align*}
E^{s, t}_{1}
=\bigoplus_{x\in X^{(s)}}\HO^{s+t}_{x}(
X, \mathcal{F}
)
\Rightarrow
\HO^{s+t}_{\et}(
X, \mathcal{F}
)
\end{align*}
is constructed in the case where
$X$ is equidimensional and $\mathcal{F}$ is a sheaf of abelian groups on $X_{\et}$.
By using the same method as in the construction of the above coniveau spectral sequence, we have the following:
\begin{thm}\upshape(cf. \cite[Part 1, \S 1]{C-H-K})\label{sp}
Let $X$ be an equidimensional scheme and noetherian of dimension $d$.
Let $\mathcal{F}^{\bullet}$ be a bounded below complex of
sheaves of abelian groups on $X_{\et}$.
Then we have a spectral sequence
\begin{equation}\label{sp1}
E_{1}^{s, t}(X, \mathcal{F}^{\bullet})
=
\bigoplus_{x\in X^{(s)}}
\HO^{s+t}_{x}
(
X_{\et}, 
\mathcal{F}^{\bullet}
)
\Rightarrow
E^{s+t}(X, \mathcal{F}^{\bullet})
=
\HO_{\et}^{s+t}(X, \mathcal{F}^{\bullet}
).
\end{equation}
\end{thm}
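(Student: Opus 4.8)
The plan is to realize the spectral sequence (\ref{sp1}) as the spectral sequence of the coniveau filtration, i.e. the filtration of $X$ by codimension of the support, packaged as an exact couple in \'{e}tale hypercohomology with supports. Since $\mathcal{F}^{\bullet}$ is bounded below, I may choose an injective (or Cartan--Eilenberg) resolution and, for each closed subset $Z\subseteq X$, form the hypercohomology $\HO^{s+t}_{Z}(X_{\et}, \mathcal{F}^{\bullet})$ with support in $Z$. For a nested pair $Z'\subseteq Z$ of closed subsets the localization triangle gives a long exact sequence
\begin{equation*}
\cdots\to
\HO^{n}_{Z'}(X_{\et}, \mathcal{F}^{\bullet})
\to
\HO^{n}_{Z}(X_{\et}, \mathcal{F}^{\bullet})
\to
\HO^{n}_{Z\setminus Z'}((X\setminus Z')_{\et}, \mathcal{F}^{\bullet})
\to
\HO^{n+1}_{Z'}(X_{\et}, \mathcal{F}^{\bullet})
\to\cdots,
\end{equation*}
which is the basic input.

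Next I would assemble the exact couple. Writing $X_{\geq s}$ for the set of closed subsets of $X$ of codimension $\geq s$, directed by inclusion, I set
\begin{equation*}
D_{1}^{s, t}
=
\varinjlim_{Z\in X_{\geq s}}
\HO^{s+t}_{Z}(X_{\et}, \mathcal{F}^{\bullet}).
\end{equation*}
The inclusions $X_{\geq s+1}\hookrightarrow X_{\geq s}$ together with the connecting maps of the localization sequences assemble into an exact couple $(D_{1}, E_{1})$ with
\begin{equation*}
E_{1}^{s, t}
=
\varinjlim_{Z'\subseteq Z}
\HO^{s+t}_{Z\setminus Z'}((X\setminus Z')_{\et}, \mathcal{F}^{\bullet}),
\end{equation*}
the colimit being taken over pairs with $Z\in X_{\geq s}$ and $Z'\in X_{\geq s+1}$; its derived couple then produces the spectral sequence (\ref{sp1}).

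The heart of the argument is the identification
\begin{equation*}
E_{1}^{s, t}
\simeq
\bigoplus_{x\in X^{(s)}}
\HO^{s+t}_{x}(X_{\et}, \mathcal{F}^{\bullet}).
\end{equation*}
Because $X$ is noetherian, any $Z\in X_{\geq s}$ has only finitely many codimension-$s$ components, and enlarging $Z'$ so as to absorb all components of codimension $\geq s+1$ together with the pairwise intersections removes every stratum below the top one; in the colimit the support $Z\setminus Z'$ therefore separates into the disjoint generic points of the codimension-$s$ components. By excision the colimit splits as a direct sum indexed by $x\in X^{(s)}$, and the $x$-summand is the local cohomology at $x$, namely the \'{e}tale hypercohomology of $\spec\mathcal{O}_{X, x}$ with support in its closed point, which is $\HO^{s+t}_{x}(X_{\et}, \mathcal{F}^{\bullet})$.

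Finally, convergence is automatic: since $\dim X=d$ one has $X_{\geq s}=\emptyset$ for $s>d$, so the filtration is finite, and the boundedness below of $\mathcal{F}^{\bullet}$ bounds the $E_{1}$-page below in total degree; hence for each $n$ only finitely many $E_{1}^{s, n-s}$ are nonzero and the spectral sequence converges to $\varinjlim_{Z}\HO^{n}_{Z}(X_{\et}, \mathcal{F}^{\bullet})=\HO^{n}_{\et}(X, \mathcal{F}^{\bullet})$, the colimit over all closed $Z$ exhausting $X$. The main obstacle is the $E_{1}$-identification: one must commute the derived functor of sections-with-support past the filtered colimit over closed subsets and verify, via excision, that the graded pieces localize exactly to the codimension-$s$ points. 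This is precisely where the noetherian and equidimensionality hypotheses enter, the latter guaranteeing that codimension is additive along the filtration so that the strata of the coniveau filtration are indexed cleanly by $X^{(s)}$.
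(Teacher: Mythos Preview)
Your proposal is correct and follows essentially the same route as the paper: both construct the coniveau spectral sequence from the exact couple arising from localization long exact sequences, then identify the $E_{1}$-page with local cohomology at codimension-$s$ points by passing to a filtered colimit over closed subsets and invoking excision. The only cosmetic difference is packaging---the paper first fixes a flag $\emptyset\subset Z_{d}\subset\cdots\subset Z_{0}=X$, builds the exact couple and verifies convergence for that flag, and only then takes the colimit over all flags, whereas you take the colimit in the definition of $D_{1}^{s,t}$ and $E_{1}^{s,t}$ from the outset; the underlying mechanism and the key excision step (the paper's isomorphism $\bigoplus\HO^{*}_{T_{i}}\simeq\HO^{*}_{\cup T_{i}}$ for pairwise disjoint closed $T_{i}$) are identical.
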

\begin{proof}\upshape
The proof of the statement is the same as \cite[Part 1, \S 1]{C-H-K}.
We review the proof of \cite[Part 1, \S 1]{C-H-K} for convenience.
Consider a chain of closed subsets of $X$
\begin{equation*}
\overset{\to}{Z}:
\emptyset
\subset
Z_{d}\subset Z_{d-1}\subset
\cdots
\subset Z_{0}=X
\end{equation*}
where
$Z_{i}$ is
a closed subset of $X$ with
$\operatorname{codim}_{X}Z_{i}=i$. 
Take the convention that
$Z_{i}=0$ for $i>d$ and $Z_{i}=X$ for $i<0$.
For a pair $(Z_{p+1}\subset Z_{p})$,
the sequence
\begin{align*}
\cdots
&\to
\HO^{p+q}_{Z_{p+1}}(
X_{\et}, \mathcal{F}^{\bullet})
\xrightarrow{i^{p+1, q-1}}
\HO^{p+q}_{Z_{p}}(
X_{\et}, \mathcal{F}^{\bullet})
\xrightarrow{j^{p, q}}
\HO^{p+q}_{Z_{p}\setminus Z_{p+1}}(
(X\setminus Z_{p+1})_{\et},
\mathcal{F}^{\bullet})  \\
&\xrightarrow{k^{p, q}}
\HO^{p+q+1}_{Z_{p+1}}(
X_{\et}, \mathcal{F}^{\bullet})
\to
\cdots
\end{align*}
is exact 
(cf. \cite[p.92, III, Remark 1.26]{M}).

So we can construct an exact couple 
\begin{math}
C_{\underset{Z}{\to}}(D, E, i, j, k)
\end{math}
by setting
\begin{equation*}
D^{p, q}=\HO^{p+q}_{Z_{p}}(X_{\et}, 
\mathcal{F}^{\bullet}
)    
~~
\mathrm{and}
~~
E^{p, q}
=
\HO^{p+q}_{Z_{p}\setminus Z_{p+1}}(
(X\setminus Z_{p+1})_{\et}, 
\mathcal{F}^{\bullet}
):
\end{equation*}
\begin{equation*}
\xymatrix@C=10pt{
D^{p+1, q-1} \ar[rr]^-{i^{p+1, q-1}} && D^{p, q} \ar[ld]^-{j^{p, q}}
\\
& E^{p, q}\ar[lu]^-{k^{p, q}}_{(0, +1)}. & 
}   
\end{equation*}

By \cite[p.39, Theorem 2.8]{Mc},
this exact couple $C_{\underset{Z}{\to}}$ yields a spectral sequence of cohomological type. 
Put
\begin{equation*}
F^{p}\HO^{p+q}=
\operatorname{Im}\left(
D^{p, q}
\xrightarrow{i^{1, p+q-1}\circ\cdots\circ i^{p, q}}
D^{0, p+q}
\right)
~~\textrm{and}~~
K^{p, q}=
\operatorname{Ker}(i^{1, p+q-1}\circ\cdots\circ i^{p, q}).
\end{equation*}
Then we have an exact sequence
\begin{equation*}
0\to j^{p, q}(K^{p, q})
\to j^{p, q}(D^{p, q})
\to F^{p}\HO^{p+q}/F^{p+1}\HO^{p+q}
\to 0
\end{equation*}
by applying the snake lemma to the diagram
\footnotesize
\begin{equation*}
\xymatrix{
0 \ar[r]
& K^{p+1, q-1} \ar[r]\ar[d]
& D^{p+1, q-1} \ar[r]\ar[d]^{i^{p+1, q-1}}
& F^{p+1}\HO^{p+q} \ar[r]\ar@{^{(}-_>}[d]
& 0 
\\
0 \ar[r]
& K^{p, q} \ar[r]
& D^{p, q} \ar[r]
& F^{p}\HO^{p+q} \ar[r]
& 0.
}    
\end{equation*}
\normalsize

Consequently, the spectral sequence associated to 
the exact couple $C_{\underset{Z}{\to}}$ 
converges to 
$\HO^{*}_{\et}(X, \mathcal{F}^{\bullet})$ 
with the filtration $F^{p}\HO^{*}$
by \cite[p.40, Proposition 2.9]{Mc}.

Order the set of $(d+1)$-tuples
$\overset{\to}{Z}$ by
$\overset{\to}{Z}\leq \overset{\to}{Z^{\prime}}$ 
if
$Z_{q}\subset Z^{\prime}_{q}$
for all $q$.
Passing to the limit now defines a new couple $\underset{\to}{C}$
and the spectral sequence associated to the exact couple $\underset{\to}{C}$
still converges to $\HO^{*}_{\et}(X, \mathcal{F}^{\bullet})$.
If 
$T_{1}, \cdots, T_{r}$
are pairwise distinct closed subsets of $X$, then
we have an isomorphism
\begin{equation}\label{PCS}
\bigoplus\HO^{*}_{T_{i}}(
X, \mathcal{F}^{\bullet}
)
\simeq
\HO^{*}_{\cup T_{i}}(
X, \mathcal{F}^{\bullet}
)
\end{equation}
(cf. \cite[Part I, \S 1.2, The proof of Lemma 1.2.1]{C-H-K}).
Hence we have an isomorphism
\begin{equation*}
\underset{\to}{\lim}_{
\overset{\to}{Z}}  
\HO^{q+r}_{Z_{q}\setminus Z_{q+1}}(
(X\setminus Z_{q+1})_{\et}, 
\mathcal{F}^{\bullet}
)
\simeq
\bigoplus_{x\in X^{(q)}}
\HO^{q+r}_{x}(
X_{\et}, \mathcal{F}^{\bullet}
)
\end{equation*}
by (\ref{PCS}). This completes the proof.
\end{proof}
\begin{defi}\upshape\label{DefBO}
Let $m$ be a positive integer. 
Let $X$ be an essentially smooth scheme
over the spectrum of a regular local ring $A$ with $\operatorname{dim}(A)\leq 1$.
We call the following complex 
the Cousin complex
$(\BO^{q, n}_{m}(X)^{\bullet})$:
\begin{align*}
0
\to    
\bigoplus_{x\in X^{(0)}}
\HO^{n+q}_{x}
(
X_{\et}, 
\mathbb{Z}/m(n)
)
\xrightarrow{d_{1}^{0, n+q}(X, n)}
\bigoplus_{x\in X^{(1)}}
\HO^{n+q+1}_{x}
(
X_{\et}, \mathbb{Z}/m(n)
)
\\
\xrightarrow{d_{1}^{1, n+q}(X, n)}
\cdots 
\xrightarrow{d_{1}^{j-1, n+q}(X, n)}
\bigoplus_{x\in X^{(j)}}
\HO^{n+q+j}_{x}
(
X_{\et}, \mathbb{Z}/m(n)
)
\xrightarrow{d_{1}^{j, n+q}(X, n)}
\cdots,
\end{align*}
which is defined as the complex
$E_{1}^{\bullet, n+q}$ -terms of the coniveau spectral sequence
(\ref{sp1})
for $\mathcal{F}^{\bullet}=\mathbb{Z}/m(n)_{\et}$.
\end{defi}
\begin{lem}\upshape\label{spsqc}
Let
$X$ be an equidimensional scheme and noetherian of dimension $d$. Let 
\begin{math}
\alpha^{\bullet}: \mathcal{F}^{\bullet}
\to
\mathcal{G}^{\bullet}
\end{math}
be a homomorphism of bounded below complexes of sheaves of
abelian groups on $X_{\et}$.
Then the diagram
\begin{align*}
\xymatrix@C=44pt{
\displaystyle
\bigoplus_{x\in X^{(i)}}
\HO_{x}^{n+q+i}(X_{\et}, \mathcal{F}^{\bullet})
\ar[r]^-{d_{1}^{i,n+q}(
X, \mathcal{F}^{\bullet})}\ar[d]_{\alpha^{i, n+q}}
& \displaystyle
\bigoplus_{x\in X^{(i+1)}}
\HO_{x}^{n+q+i+1}(
X_{\et}, 
\mathcal{F}^{\bullet})
\ar[d]^{\alpha^{i, n+q+1}}   
\\
\displaystyle
\bigoplus_{x\in X^{(i)}} 
\HO_{x}^{n+q+i}(
X_{\et}, 
\mathcal{G}^{\bullet}
)
\ar[r]^-{d_{1}^{i, n+q}(X, \mathcal{G}^{\bullet})}
& \displaystyle
\bigoplus_{x\in X^{(i+1)}}
\HO_{x}^{n+q+i+1}(
X_{\et}, \mathcal{G}^{\bullet}
)
}    
\end{align*}
is commutative where the vertical arrows are induced by $\alpha^{\bullet}$.
\end{lem}
\begin{proof}\upshape
Since the composite map
\footnotesize
\begin{align*}
\xymatrix{
\HO^{q+r}_{
Z_{q}\setminus Z_{q+1}}(
(X\setminus Z_{q})_{\et},
\mathcal{F}^{\bullet}
)
\ar[r]^-{k^{q, r}}
&
\HO^{q+r+1}_{Z_{q+1}}
(X_{\et}, \mathcal{F}^{\bullet}
)
\ar[r]^-{j^{q+1, r}}
&
\HO^{q+r+1}_{
Z_{q+1}\setminus Z_{q+2}}(
(X\setminus Z_{q+2})_{\et},
\mathcal{F}^{\bullet}
)
}
\end{align*}
\normalsize
induces the differential
\begin{equation*}
d_{1}^{q, r}(X, \mathcal{F}^{\bullet}):   
\bigoplus_{x\in X^{(q)}}
\HO^{q+r}_{x}(
X_{\et}, 
\mathcal{F}^{\bullet}
)
\to
\bigoplus_{x\in X^{(q+1)}}
\HO^{q+r+1}_{x}(
X_{\et}, \mathcal{F}^{\bullet}
)
\end{equation*}
by passing to the limit, the statement directly follows.
\end{proof}
\subsection{Logarithmic Hodge-Witt sheaves}
Let $Y$ be a normal crossing variety over 
the spectrum of
a field $k$ of positive characteristic $p$, that is,
a pure-dimensional scheme of finite type over $\spec(k)$ which 
is everywhere \'{e}tale locally isomorphic to
\begin{equation*}
\spec(k[T_{0}, \cdots, T_{d}]/(T_{0}T_{1}\cdots T_{r}))    
\end{equation*}
for some integer $r$ with $0\leq r\leq d=\operatorname{dim}(Y)$.

Let us denote the set of points on $Y$ of codimension $s$ by $Y^{(s)}$.
For a point $y\in Y$, let $i_{y}$ be the canonical map
$y\hookrightarrow Y$. Then
we have logarithmic Hodge-Witt sheaves:%
\begin{equation*}
\lambda^{n}_{Y, r}
:=\operatorname{Im}
\left(
d\log:
(\mathbb{G}_{m, Y})^{\otimes n}
\to
\bigoplus_{y\in Y^{(0)}}
i_{y *}W_{r}\Omega^{n}_{y, \log}
\right)    
\end{equation*}
(cf. \cite[p.726, Definition 3.1.1]{SaL}) and
\begin{equation*}
\nu^{n}_{Y, r}:=\operatorname{Ker}\left(
\delta^{n}_{Y, r}:
\bigoplus_{y\in Y^{(0)}}i_{y *}W_{r}\Omega^{n}_{y, \log}
\to
\bigoplus_{y\in Y^{(1)}}i_{y *}W_{r}\Omega^{n-1}_{y, \log}
\right)    
\end{equation*}
(cf. \cite[p.715, Definition 2.1.1]{SaL}) which 
agree with $W_{r}\Omega^{n}_{Y, \log}$ in the case where $Y$ is a smooth scheme over $\spec(k)$ 
(cf. \cite[p.575, Definition 2.6]{Sh}, \cite[p.600, Theorem 4.1]{Sh}). 
\subsection{
$p$-adic vanishing cycle and
$p$-adic \'{e}tale Tate twist $\mathfrak{T}_{r}(n)$
}\label{RVT}
Let $A$ be a discrete valuation ring of mixed characteristic $(0, p)$
with the quotient field $K$. Let
$\pi$ be a prime element of $A$
and $k$ the residue field of $A$. 

Let $\mathfrak{X}$ be a regular semistable family over $\spec(A)$, that is,
a regular scheme of pure dimension which is flat of finite type over $\spec(A)$,
$\mathfrak{X}_{K}=\mathfrak{X}\otimes \spec(K)$ is
smooth over $\spec(K)$ and the special fiber $Y$ of $\mathfrak{X}$
is a reduced divisor with normal crossings on $\mathfrak{X}$ (cf. \cite[p.526, 1.11]{SaP}).

Let $j$ and $\iota$ be as follows:
\begin{equation*}
\mathfrak{X}_{K}
\xrightarrow{j} 
\mathfrak{X}
\xleftarrow{\iota}
Y.
\end{equation*}
First, we review the structure of the $p$-adic vanishing cycle
\begin{equation*}
M^{n}_{r}:=\iota^{*}R^{n}j_{*}\mu_{p^{r}}^{\otimes n}    
\end{equation*}
for integers $n\geq 0$ and $r>0$.
We define the \'{e}tale sheaf
$\mathcal{K}^{M}_{n, \mathfrak{X}_{K}/Y}$ on $Y$ as
\begin{equation*}
\mathcal{K}^{M}_{n, \mathfrak{X}_{K}/Y}
:=
(
\iota^{*}j_{*}\mathcal{O}^{\times}_{\mathfrak{X}_{K}}
)^{\otimes n}/J
\end{equation*}
where $J$ denotes the subsheaf which is generated by local sections of the form
\begin{equation*}
x_{1}\otimes\cdots\otimes x_{n}
~~(x_{i}\in \iota^{*}j_{*}\mathcal{O}^{\times}_{\mathfrak{X}_{K}})
\end{equation*}
with $x_{i}+x_{j}=0$ or $1$ for some $1\leq i<j\leq n$. By \cite[(1.2)]{B-K}, there is a natural map
\begin{equation}\label{TKFM}
\mathcal{K}^{M}_{n, \mathfrak{X}_{K}/ Y}
\to 
M^{n}_{r}
\end{equation}
and we define the filtrations $U^{*}$ and $V^{*}$ on the $p$-adic vanishing cycle 
$M^{n}_{r}$ by using the map (\ref{TKFM}) as follows:
\begin{defi}\label{DeUV}\upshape
\begin{description}
\item[(1)]
Let $U^{0}_{\mathfrak{X}_{K}}$ be the full sheaf 
$\iota^{*}j_{*}\mathcal{O}_{\mathfrak{X}_{K}}^{\times}$.
For $q\geq 1$, let
$U^{q}_{\mathfrak{X}_{K}}$ be the \'{e}tale subsheaf of 
$\iota^{*}j_{*}\mathcal{O}^{\times}_{\mathfrak{X}_{K}}$
which is generated by local sections of the form
$1+\pi^{q}\cdot a$ with $a\in \iota^{*}\mathcal{O}_{\mathfrak{X}}$.
We define the subsheaf
$U^{q}\mathcal{K}^{M}_{n, \mathfrak{X}_{K}/ Y}$
as the part which is generated by
$U^{q}_{\mathfrak{X}_{K}}\otimes\{\iota^{*}j_{*}\mathcal{O}^{\times}_{\mathfrak{X}_{K}}\}^{\otimes (n-1)}$.
\item[(2)] We define the subsheaf $U^{q}M^{n}_{r}$ $(q\geq 0)$
of $M^{n}_{r}$ as the image of
$U^{q}\mathcal{K}^{M}_{n, \mathfrak{X}_{K}/ Y}$
under the map (\ref{TKFM}).
We define the subsheaf $V^{q}M^{n}_{r}$ $(q\geq 0)$ of
$M^{n}_{r}$ as the part which is generated by $U^{q+1}M_{r}^{n}$
and the image of 
$U^{p}\mathcal{K}^{M}_{n-1, \mathfrak{X}_{K}/ Y}\otimes\langle a\rangle$
under the map (\ref{TKFM}).
\end{description}
\end{defi}
Since $\mathfrak{X}$ is a regular semistable family over $\spec(A)$, 
$\mathfrak{X}$ is everywhere \'{e}tale locally isomorphic to
\begin{equation*}
\spec
\left(
A[T_{1}, \cdots, T_{d}]/
(T_{1}\cdots T_{a}-\pi)
\right)
\end{equation*}
for some $1\leq a\leq d$. Then the sheaf of the modified differential modules 
$\omega^{q}_{\mathfrak{X}}$ are defined as
\begin{equation*}
\omega^{q}_{\mathfrak{X}}  
:=\displaystyle\wedge^{q}_{\mathcal{O}_{\mathfrak{X}}}
\omega^{1}_{\mathfrak{X}}
\end{equation*}
where $\omega^{1}_{\mathfrak{X}}$ is the $\mathcal{O}_{\mathfrak{X}}$-module 
which is generated
by $dT_{i}/T_{i}$ $(1\leq i\leq a)$ and $dT_{i}$ $(a+1\leq i\leq d)$
with the relation
\begin{equation*}
\displaystyle\sum^{a}_{i=1} 
dT_{i}/T_{i}
=0.
\end{equation*}
Moreover, we define $\omega^{q}_{Y}$ as
\begin{equation*}
\omega_{Y}^{q}:=
\omega_{\mathfrak{X}}^{q}\otimes_{\mathcal{O}_{\mathfrak{X}}}\mathcal{O}_{Y}
\end{equation*}
(cf. \cite[p.546, (1.5)]{Hy}. See also \cite[p.531]{SaP}). 

\vspace{1.0mm}

Then,  
\begin{align*}
\operatorname{gr}^{q}_{U/V}M^{n}_{r}
:=U^{q}M^{n}_{r}/V^{q}M^{n}_{r}
&&
\textrm{and}
&&
\operatorname{gr}^{q}_{V/U}M^{n}_{r}
:=V^{q}M^{n}_{r}/U^{q+1}M^{n}_{r}
\end{align*}
are described as follows:
\begin{thm}\upshape\label{CUV}(cf. Bloch-Kato \cite[pp.112--113, Corollary (1.4.1)]{B-K}
/ Hyodo \cite[p.548, (1.7) Corollary]{Hy},
Sato \cite[p.532, Theorem 3.3.7]{SaP},
\cite[p.184--185, Theorem 3.3]{SaR})
Let the notations be the same as above. Then
\begin{description}
\item[(1)] The map (\ref{TKFM}) is surjective, that is, the subsheaf $U^{0}M_{r}^{n}$
is the full sheaf $M^{n}_{r}$ for any integers $n\geq 0$ and $r>0$.
\item[(2)] Let $e$ be the absolute ramification index of $K$,
and let $r=1$. Then for $q$ with 
$1\leq q<e^{\prime}:=pe/(p-1)$, there are isomorphisms
\begin{align*}
\operatorname{gr}^{q}_{U/V}M^{n}_{1}
&\simeq
\begin{cases}
\omega^{n-1}_{Y}/\mathcal{B}^{n-1} & (p\nmid q)
\\
\omega^{n-1}_{Y}/\mathcal{Z}^{n-1}_{Y} & (p\mid q)
\end{cases} \\
~~
\\
\operatorname{gr}^{q}_{V/U}M^{n}_{1}
&\simeq 
\omega^{n-2}_{Y}/\mathcal{Z}^{n-2}_{Y}
\end{align*}
given by the following, respectively:
\footnotesize
\begin{align*}
\{
1+\pi^{q}a, x_{1}, 
\cdots, x_{n-1}
\}
~\mathrm{mod}~ 
V^{q}M^{n}_{1}  
&\mapsto
\begin{cases}
\bar{a}\cdot 
d\log(\bar{x_{1}})
\wedge\cdots\wedge d\log(\bar{x_{n-1}})
~\mathrm{mod}~\mathcal{B}^{n-1}_{Y}
& (p\nmid q)
\\
\bar{a}\cdot
d\log(\bar{x_{1}})
\wedge\cdots\wedge d\log(\bar{x_{n-1}})
~\mathrm{mod}~\mathcal{Z}^{n-1}_{Y}
& (p\mid q)
\end{cases}
\\
~~
\\
\{
1+\pi^{q}a, x_{1}, \cdots, x_{n-2}, \pi
\}
~\operatorname{mod}~U^{q+1}M_{1}^{n}
&\mapsto
\bar{a}\cdot d\log(\bar{x_{1}})
\wedge\cdots\wedge d\log(\bar{x_{n-2}})
~\operatorname{mod}~\mathcal{Z}^{n-2}_{Y}
\end{align*}
\normalsize
where $\mathcal{B}^{m}_{Y}$ (resp. $\mathcal{Z}_{Y}^{m}$) denotes 
the image of $d: \omega^{m-1}_{Y}\to \omega^{m}_{Y}$
(resp. the kernel of 
$d: \omega^{m}_{Y}\to \omega^{m+1}_{Y}$),
$a$ denotes a local section of $\mathcal{O}_{X}$ and
$\bar{a}$ denotes its residue class in $\mathcal{O}_{Y}$.
\item[(3)] We have
\begin{equation*}
U^{q}M^{n}_{1}
=
V^{q}M^{n}_{1}
=0
\end{equation*}
for any $q\geq e^{\prime}$.
\end{description}
\end{thm}

Next, we review the structure of $M^{n}_{r}/U^{1}M^{n}_{r}$.
We define the \'{e}tale subsheaf $FM_{r}^{n}$ as the part which is generated by
$U^{1}M^{n}_{r}$ and the image of
$(\iota^{*}\mathcal{O}^{\times}_{\mathfrak{X}})^{\otimes n}$
under the map (\ref{TKFM}). Then we have the followings:

\begin{thm}\upshape\label{CFU}(cf. \cite[p.533, Theorem 3.4.2]{SaP}, \cite[p.186, Theorem 3.4]{SaR})
There are short exact sequences of sheaves on $Y_{\et}$
\begin{equation*}
\xymatrix@R=15pt{
0
\ar[r]
&
FM^{n}_{r}
\ar[r]
&
M^{n}_{r}
\ar[r]^{\sigma}
&
\nu^{n-1}_{Y, r}
\ar[r]
&
0,
\\
0
\ar[r]
&
U^{1}M_{r}^{n}
\ar[r]
&
FM^{n}_{r}
\ar[r]^{\tau}
&
\lambda_{r}^{n}
\ar[r]
&
0,
}
\end{equation*}
where 
$\sigma$ is induced by the composite map (\cite[p.530, (3.2.2)]{SaP}) of \'{e}tale sheaves and
\cite[p.530, Lemma 3.2.4]{SaP},
and $\tau$ sends a symbol
\begin{equation*}
\{
x_{1}, x_{2}, \cdots, x_{n}
\} 
~(x_{i}\in \iota^{*}\mathcal{O}^{\times}_{\mathfrak{X}})
\end{equation*}
to
\begin{equation*}
d\log(
\bar{x_{1}}
\otimes
\bar{x_{2}}
\otimes
\cdots
\otimes
\bar{x_{n}}
).    
\end{equation*}
\end{thm}

Now we review the definition of the $p$-adic \'{e}tale Tate twist.

\begin{defi}\upshape\label{DTT}(\textbf{$p$-adic \'{e}tale Tate twist}, cf. \cite[p.537, Definition 4.2.4]{SaP} 
and \cite[p.187, Remark 3.7]{SaR})
Let the notations be the same as above. For $n=0$, 
\begin{equation*}
\mathfrak{T}_{r}(0)
:=
\mathbb{Z}/p^{r}\mathbb{Z}.
\end{equation*}
For $n\geq 1$, $\mathfrak{T}_{r}(n)$ is defined as a complex which is fitted into
the distinguished triangle
\begin{align*}
\iota_{*}\nu^{n-1}_{Y, r}[-n-1]
\to
\mathfrak{T}_{r}(n)
\to
\tau_{\leq n}Rj_{*}\mu_{p^{r}}^{\otimes n}
\xrightarrow{\sigma^{n}_{\mathfrak{X}, r}}
\iota_{*}\nu^{n-1}_{Y, r}[-n]
\end{align*}
where the morphism $\sigma^{n}_{\mathfrak{X}, r}$ is induced by the homomorphism $\sigma$
in Theorem \ref{CFU}.
\end{defi}
\begin{rem}\upshape\label{RTT}
\begin{description}
\item[(1)] Since we have an isomorphism 
\begin{equation*}
j^{*}R^{s}j_{*}\mu_{p^{r}}^{\otimes n}
\simeq 0
\end{equation*}
for $s>0$, we have an isomorphism
\begin{equation*}
R^{n}j_{*}\mu_{p^{r}}^{\otimes n}
\xrightarrow{\simeq}
\iota_{*}M^{n}_{r}
\end{equation*}
and so we have an isomorphism
\begin{equation*}
\mathcal{H}^{t}(
\mathfrak{T}_{r}(n)
)
\simeq 
0
\end{equation*}
for integers $n\geq 0$ and $t>n$ by Definition \ref{DTT} and Theorem \ref{CFU}
(cf. \cite[p.520, Theorem 1.1.1]{SaP}).
%
\item[(2)]
By Definition \ref{DTT} and Theorem \ref{CFU},
we have an isomorphism
\begin{equation*}
FM^{n}_{r}
\simeq 
\mathcal{H}^{n}(
\iota^{*}\mathfrak{T}_{r}(n)
)
\end{equation*}
and so we
have a finite filtration of $\mathcal{H}^{n}(\iota^{*}\mathfrak{T}_{1}(n))$ whose
subquotients are described by 
$\lambda_{1}^{n}$ and the quotients of
$\omega^{n-1}_{Y}$ and $\omega^{n-2}_{Y}$
by Theorem \ref{CUV} and Theorem \ref{CFU}.
\item[(3)]
In the case where $\mathfrak{X}$ is a smooth scheme over
$\spec(A)$,
we have a quasi-isomorphism
\begin{equation*}
\mathbb{Z}/p^{r}(n)_{\et}^{\mathfrak{X}}
\simeq
\mathfrak{T}_{r}(n)_{\mathfrak{X}}
\end{equation*}
by \cite[p.524, Conjecture 1.4.1]{SaP}.
Here $\mathbb{Z}(n)_{\et}$ is Bloch's cycle complex for \'{e}tale
topology and $\mathbb{Z}/p^{r}(n)_{\et}=\mathbb{Z}(n)_{\et}\otimes\mathbb{Z}/p^{r}$.
\end{description}
\end{rem}

Moreover, we have the following purity theorem for $\mathfrak{T}_{r}(n)$.

\begin{thm}\label{PETP}\upshape(cf. \cite[p.540, Theorem 4.4.7]{SaP} and \cite[p.187, Remark 3.7]{SaR})
Let the notations be the same as above.
Let $Z\hookrightarrow Y$ be a closed immersion of pure codimension with
$\operatorname{codim}_{\mathfrak{X}}(Z)=c$ and
$i_{Z}$ the composite map
$Z\hookrightarrow Y\hookrightarrow \mathfrak{X}$. 
Suppose that $Z$ is a normal crossing variety over $\spec(k)$.

Then the morphism
\begin{equation*}
\tau_{\leq n+c}(\operatorname{Gys}^{n}_{i_{Z}}):
\nu^{n-c}_{Z, r}[-n-c]
\to
\tau_{\leq n+c}R(i_{Z})^{!}
\mathfrak{T}_{r}(n)_{\mathfrak{X}}
\end{equation*}
is an isomorphism 
where $\operatorname{Gys}^{n}_{i_{Z}}$ is defined in 
\cite[p.540, Definition 4.4.5]{SaP}.
\end{thm}

The following directly follows from the theory of the motivic cohomology (cf.\cite{Ge})
and the above.

\begin{prop}\upshape\label{PTT}
Let $\mathfrak{X}$ be a smooth scheme over the spectrum of a discrete valuation ring of mixed characteristic $(0, p)$
and $R$ (the henselization of) the local ring $\mathcal{O}_{\mathfrak{X}, x}$ of $\mathfrak{X}$ at $x$.   
Then the sequence
\begin{align*}
0&\to 
\HO^{s}_{\et}(R, \mathbb{Z}/p^{r}(n))
\to
\bigoplus_{x\in \spec(R)^{(0)}}
\HO^{s}_{x}(R_{\et}, \mathbb{Z}/p^{r}(n))
\nonumber 
\\
&\to \bigoplus_{x\in \spec(R)^{(1)}}
\HO^{s+1}_{x}(R_{\et}, \mathbb{Z}/p^{r}(n)) 
\to\cdots
\end{align*}
is exact for integers $s\leq n$ and $r>0$.
\end{prop}
\begin{proof}\upshape
By \cite[p.774, Theorem 1.2.5]{Ge}, we have an  exact sequence
\begin{align*}
0
&\to 
\HO^{s}_{\Zar}(
R,
\mathbb{Z}/p^{r}(n)
)
\to
\displaystyle\bigoplus_{x\in \spec(R)^{(0)}}
\HO^{s}_{\Zar}(
\kappa(x),
\mathbb{Z}/p^{r}(n)
) \\
&\to
\displaystyle\bigoplus_{x\in \spec(R)^{(1)}}
\HO^{s-1}_{\Zar}(
\kappa(x),
\mathbb{Z}/p^{r}(n-1)
)
\to \cdots
\end{align*}
where the sequence is induced by the spectral sequence
\begin{equation*}
E^{u, v}_{1}
=
\displaystyle\bigoplus_{x\in \spec(R)^{(u)}}
\HO^{2n-u+v}_{\Zar}(
\kappa(x),
\mathbb{Z}/p^{r}(n-u)
)
\Rightarrow
E^{u+v}
=\HO^{2n+u+v}_{\Zar}(
R_{\et},
\mathbb{Z}/p^{r}(n)
)
\end{equation*}
(cf. \cite[p.782]{Ge}).
So we have an exact sequence
\begin{align*}
0
&
\to 
\HO^{s}_{\et}(
R,
\mathfrak{T}_{r}(n)
)
\to
\displaystyle\bigoplus_{x\in \spec(R)^{(0)}}
\HO^{s}_{\et}(
\kappa(x),
\mathfrak{T}_{r}(n)
)  
\\
&
\to
\displaystyle\bigoplus_{x\in \spec(R)^{(1)}}
\HO^{s-1}_{\et}(
\kappa(x),
\mathfrak{T}_{r}(n-1)
)
\to\cdots
\end{align*}
for $s\leq n$ and $r>0$
by the Beilinson-Lichtenbaum conjecture (cf. \cite[p.774, Theorem 1.2.2]{Ge}, \cite{V}) 
and Remark \ref{RTT} (3). 
Hence the statement follows from the purity theorem for $\mathfrak{T}_{r}(n)$ 
(cf. Theorem \ref{PETP} and \cite[p.241, VI, Theorem 5.1]{M}) and Remark \ref{RTT} (3).
\end{proof}
\section{\'{E}tale hypercohomology of $\mathfrak{T}_{r}(n)$ with support}\label{Ttexact}

In this section, we prove Theorem \ref{Intpre} (cf. Theorem \ref{premain}). 
In order to prove Theorem \ref{Intpre}, we investigate 
\'{e}tale hypercohomology groups of $\mathfrak{T}_{r}(n)$ with support.

\begin{prop}\upshape\label{VaniG}
Let $A$ be an equidimensional catenary local ring with the maximal ideal $\mathfrak{m}$
and $\mathcal{F}$ a sheaf on $\spec(A)_{\et}$.
Suppose that
\begin{equation*}
\HO^{s}_{\et}(
A_{x}, \mathcal{F}
)
=0
\end{equation*}
for any $s\geq N$ and $x\in \spec(A)_{\et}$. Then we have
\begin{equation*}
\HO^{t}_{\mathfrak{m}}(
A_{\et},
\mathcal{F}
)
=0
\end{equation*}
for any $t\geq N+\operatorname{dim}(A)$.
\end{prop}
\begin{proof}\upshape
The proof of the statement is similar to the proof of \cite[Proposition 2.6]{Sak4}.
We prove the statement by induction on 
$\operatorname{dim}(A)$.

Suppose that $\operatorname{dim}(A)=0$. Then the statement directly follows from 
the assumption of the statement.

Suppose that $\operatorname{dim}(A)=d+1$ and the statement holds in the case where
$\operatorname{dim}(A)\leq d$. 
By Theorem \ref{sp}, we have a spectral sequence
\begin{equation*}
E_{1}^{s, t}=
\bigoplus_{x\in \spec(A)^{(s)}}
\HO^{s+t}_{x}(A_{\et}, \mathcal{F})
\Rightarrow
E^{s+t}=
\HO^{s+t}_{\et}(A, \mathcal{F}).
\end{equation*}
Then we have
\begin{equation*}
E^{s+t}=0
\end{equation*}
for $s+t\geq N+d+1$ by the assumption of the statement. So we have
\begin{equation*}
E_{\infty}^{d+1, t}=0
\end{equation*}
for $t\geq N$. 
Since the statement holds in the case where $\operatorname{dim}(A)\leq d$
by the assumption of induction,
we have
\begin{equation*}
E_{r}^{s, t}
=0
\end{equation*}
for $s\leq d$, $t\geq N$ and $r\geq 1$. Hence we have
\begin{equation*}
E_{1}^{d+1, t}=
E_{2}^{d+1, t}=
\cdots 
=
E_{\infty}^{d+1, t}
=0
\end{equation*}
for $t\geq N$. This completes the proof.
\end{proof}
\begin{cor}\upshape(cf.
\cite[Proposition 2.6]{Sak4}
)\label{VaniSP}
Let $A$ be an equidimensional catenary 
local ring of positive characteristic $p>0$ and 
$\mathfrak{m}$ the maximal ideal of $A$. 
Let $\mathcal{F}$ be a $p$-torsion sheaf on $\spec(A)_{\et}$. 
Then we have
\begin{equation*}
\HO^{t}_{\mathfrak{m}}(
A_{\et},
\mathcal{F}
)
=0
\end{equation*}
for $t\geq \operatorname{dim}(A)+2$.
\end{cor}
\begin{proof}\upshape
The statement follows from \cite[Expos\'{e} X, Th\'{e}or\`{e}me 5]{SGA4} and Proposition \ref{VaniG}.
\end{proof}
\begin{cor}\upshape\label{VaniSC}
Let $A$ be an equidimensional catenary local ring with the maximal ideal $\mathfrak{m}$.
Let $\mathcal{F}_{\Zar}$ be a quasi-coherent sheaf 
of $\mathcal{O}_{\spec(A)}$-module
on $\spec(A)_{\Zar}$ and let $\mathcal{F}$ be the corresponding sheaf
$W(\mathcal{F}_{\Zar})$
on $\spec(A)_{\et}$
(cf. \cite[p.48, II, Examples 1.2 (d)]{M}, \cite[p.51, II, Corollary 1.6]{M}). 
Then we have
\begin{equation*}
\HO^{t}_{\mathfrak{m}}(
A_{\et},
\mathcal{F}
)
=0
\end{equation*}
for $t\geq \operatorname{dim}(A)+1$.
\end{cor}
\begin{proof}\upshape
By
\cite[p.103, III, Lemma 2.15]{M} and
\cite[p.114, III, Remark 3.8]{M}, we have
\begin{equation*}
\HO^{N}_{\et}(
A_{x}, \mathcal{F}
)=
\HO^{N}_{\Zar}(
A_{x}, \mathcal{F}_{\Zar}
)
=0
\end{equation*}
for any point $x\in \spec(A)$ and $N\geq 1$. Hence the statement follows from Proposition \ref{VaniG}.
\end{proof}
\begin{lem}\upshape\label{VanU}
Let $\mathfrak{X}$ be a regular semistable family
over the spectrum of a discrete valuation ring of
mixed characteristic $(0, p)$
and
$\iota: Y\to \mathfrak{X}$ 
the inclusion of
the closed fiber of $\mathfrak{X}$.
Let $n\geq 0$ be an integer.
Then we have    
\begin{equation*}
\HO_{x}^{t}(
Y_{\et},
U^{1}M^{n}_{1})
=0
\end{equation*}
for $x\in Y\cap \mathfrak{X}^{(s)}$ and
$t\geq s$.
Here $U^{1}M^{n}_{1}$ is defined in Definition \ref{DeUV}.
\end{lem}
\begin{proof}\upshape
Let $x\in Y\cap \mathfrak{X}^{(s)}$.
By Corollary \ref{VaniSC}, we have
\begin{equation}\label{SWV}
\HO_{x}^{t}(Y_{\et}, 
\omega_{Y}^{n-1})
=
\HO_{x}^{t}(Y_{\et}, 
\omega_{Y}^{n-2})
=
0
\end{equation}
for $t\geq s$. 
Here $\omega_{Y}^{*}$ is defined in \S \ref{RVT}.
So we have
\begin{equation*}
\HO_{x}^{t}(
Y_{\et}, \omega_{Y}^{n-1}/\mathcal{B}^{n-1}_{Y}
)    
=
\HO_{x}^{t}(
Y_{\et}, \omega_{Y}^{n-1}/\mathcal{Z}^{n-1}_{Y}
)
=
\HO_{x}^{t}(
Y_{\et}, \omega_{Y}^{n-2}/\mathcal{Z}^{n-2}_{Y}
)
=0
\end{equation*}
for $t\geq s$ by (\ref{SWV}) and Corollary \ref{VaniSP}.
Hence the statement follows from Theorem \ref{CUV}.
\end{proof}
\begin{thm}\upshape\label{iW}
Let $\mathfrak{X}$ be a regular semistable family
over the spectrum of a discrete valuation ring of
mixed characteristic $(0, p)$ and
$\iota: Y\to \mathfrak{X}$ 
the inclusion of
the closed fiber of $\mathfrak{X}$.
Let 
$n\geq 0$,
$r>0$,
$s\geq 1$ 
be integers
and $x\in Y\cap \mathfrak{X}^{(s)}$.
Then the homomorphism
\begin{equation}\label{isoil}
\HO_{x}^{n+t}(
Y_{\et}, 
\iota^{*}\mathfrak{T}_{r}(n))   
\to
\HO_{x}^{t}(
Y_{\et},
\lambda_{r}^{n}
)
\end{equation}
is an isomorphism for any $r>0$ and $t=s$.
Here the homomorphism (\ref{isoil}) is induced by
the composite
\begin{align}\label{FITFL}
\iota^{*}\mathfrak{T}_{r}(n)  
\to
\iota^{*}\tau_{\geq n}\mathfrak{T}_{r}(n)
\xleftarrow{\simeq}
\iota^{*}\mathcal{H}^{n}(\mathfrak{T}_{r}(n))[-n]
\to
\lambda_{r}^{n}[-n]
\end{align}
where the last map is induced by $\tau$ in Theorem \ref{CFU}.
Moreover, we have an isomorphism
\begin{equation}\label{YxS0}
\HO_{x}^{n+t}(
Y_{\et},
\iota^{*}\mathfrak{T}_{r}(n)
)
\simeq 
0
\end{equation}
for any $r>0$ and $t>s$.

\end{thm}
\begin{proof}\upshape
Let $n$ be any non-negative integer. 
By the spectral sequence
\begin{equation*}
E^{u, v}_{2}
=
\HO_{x}^{u}\left(
Y_{\et},
\mathcal{H}^{v}(
\iota^{*}\tau_{\leq n-1}
\mathfrak{T}_{r}(n)
)
\right)    
\Rightarrow
\HO^{u+v}_{x}(
Y_{\et},
\iota^{*}\tau_{\leq n-1}\mathfrak{T}_{r}(n)
)
\end{equation*}
and Corollary \ref{VaniSP}, 
we have an isomorphism
\begin{equation}\label{SiTv}
\HO_{x}^{n+t}(
Y_{\et},
\iota^{*}\tau_{\leq n-1}
\mathfrak{T}_{r}(n)
)
\simeq
0
\end{equation}
for $t\geq s$ and
the isomorphism (\ref{YxS0}). 
So the homomorphism
\begin{equation*}
\HO_{x}^{n+t}(
Y_{\et},
\iota^{*}
\mathfrak{T}_{r}(n)
)    
\to
\HO_{x}^{t}(
Y_{\et},
\iota^{*}\mathcal{H}^{n}
(\mathfrak{T}_{r}(n)
)
)
\end{equation*}
is an isomorphism for $t\geq s$ and is surjective for $t=s-1$ by (\ref{SiTv}).
Moreover, the homomorphism
\begin{equation*}
\HO_{x}^{t}\left(
Y_{\et},
\iota^{*}\mathcal{H}^{n}(
\mathfrak{T}_{1}(n)
)
\right)
\to
\HO_{x}^{t}(
Y_{\et},
\lambda_{1}^{n}
)
\end{equation*}
is an isomorphism for $t=s$
and is surjective for $t=s-1$ by Lemma \ref{VanU}.

So the homomorphism (\ref{isoil}) is an isomorphism for $t=s$ and 
is surjective for $t=s-1$
in the case where $r=1$.
Moreover, we have the commutative diagram
%
\footnotesize
\begin{equation*}
\xymatrix@C=9pt{
\HO_{x}^{n+s-1}(
Y_{\et},
\iota^{*}\mathfrak{T}_{1}(n)
)
\ar[r]\ar[d]
&
\HO_{x}^{n+s}(
Y_{\et},
\iota^{*}\mathfrak{T}_{r}(n)
)
\ar[r]\ar[d]
&
\HO_{x}^{n+s}(
Y_{\et},
\iota^{*}\mathfrak{T}_{r+1}(n)
)
\ar[r]\ar[d]
&
\HO_{x}^{n+s}(
Y_{\et},
\iota^{*}\mathfrak{T}_{1}(n)
)
\ar[r]\ar[d]
&
0
\\
\HO_{x}^{s-1}(
Y_{\et},
\lambda_{1}^{n}
)
\ar[r]
&
\HO_{x}^{s}(
Y_{\et},
\lambda_{r}^{n}
)
\ar[r]
&
\HO_{x}^{s}
(
Y_{\et},
\lambda_{r+1}^{n}
)
\ar[r]
&
\HO_{x}^{s}
(
Y_{\et},
\lambda_{1}^{n}
)
}
\end{equation*}
\normalsize
where the leftmost map is surjective and
the rightmost map is an isomorphism.
Here the upper row is exact by \cite[pp.538--539, Proposition 4.3.1 (3)]{SaP} and (\ref{YxS0}).
Moreover, the bottom row is exact by 
\cite[p.728, Corollary 3.2.3]{SaL}.
Hence, by induction on $r$, the statement
follows from the five lemma.
\end{proof}
\begin{prop}\upshape\label{Otit}
Let the notations be the same as in Theorem \ref{iW}. 
Let 
$j: U\hookrightarrow \mathfrak{X}$ be the open complement
$\mathfrak{X}\setminus Y$.
Then the distinguished triangle
\begin{equation*}
\iota^{*}\mathfrak{T}_{r}(n)
\to
\iota^{*}\tau_{\leq n}Rj_{*}\mu_{p^{r}}^{\otimes n}
\to
\nu_{r}^{n-1}[-n]
\end{equation*}
(cf. \cite[p.537, Definition 4.2.4]{SaP})
induces an exact sequence
\begin{align*}
0\to    
\HO_{x}^{s}(
Y_{\et},
\lambda_{r}^{n}
)
\to
\HO_{x}^{n+s}(
Y_{\et},
\iota^{*}\tau_{\leq n}
Rj_{*}\mu^{\otimes n}_{p^{r}}
)
\to
\HO_{x}^{s}(
Y_{\et},
\nu_{r}^{n-1}
)
\to 0
\end{align*}
for $x\in Y\cap X^{(s)}$.
\end{prop}
\begin{proof}\upshape
By Theorem \ref{iW}, it suffices to show that
the homomorphism
\begin{equation}\label{injYiTt}
\HO_{x}^{n+s}(Y_{\et}, \iota^{*}\mathfrak{T}_{r}(n))
\to 
\HO_{x}^{n+s}(Y_{\et}, \iota^{*}\tau_{\leq n}
Rj_{*}\mu_{p^{r}}^{\otimes n}
)
\end{equation}
is injective. So it suffices to show that 
the homomorphism
\begin{equation}\label{SurYiTt}
\HO_{x}^{n+s-1}(
Y_{\et},
\iota^{*}\tau_{\leq n}Rj_{*}\mu_{p^{r}}^{\otimes n}
)    
\to
\HO_{x}^{s-1}(
Y_{\et},
\nu_{r}^{n-1}
)
\end{equation}
is surjective. 

Suppose that $s=1$. 
By \cite[p.93, III, Corollary 1.28]{M},
it suffices to show the statement in the case where 
$\mathfrak{X}$ is the spectrum of a henselian discrete valuation ring $R$.
Then
we have isomorphisms
%
\begin{align*}
&
\HO^{n+1}_{x}(
Y_{\et},
\iota^{*}\mathfrak{T}_{r}(n)
)
\simeq 
\HO^{n+1}_{\et}(
R,
\mathfrak{T}_{r}(n)
)
\end{align*}
and
\begin{align*}
\HO^{n+1}_{x}(
Y_{\et},
\iota^{*}\tau_{\leq n}Rj_{*}\mu_{p^{r}}^{\otimes n}
)
\simeq
\HO^{n+1}_{\et}(
R, \tau_{\leq n}Rj_{*}\mu_{p^{r}}^{\otimes n}
)
\end{align*}
by \cite[p.777, The proof of Proposition 2.2.b)]{Ge}. 
So the homomorphism (\ref{injYiTt}) corresponds to
the homomorphism
\begin{equation*}
\HO^{n+1}_{\et}(R, \mathfrak{T}_{r}(n))
\to
\HO^{n+1}_{\et}(
R, 
\tau_{\leq n}Rj_{*}\mu_{p^{r}}^{\otimes n}
)
\end{equation*}
and the homomorphism (\ref{injYiTt}) is an injective for $s=1$ by 
\cite[p.52, Corollary 4.7]{Sak} (or by \cite[Lemma 4.1]{Sak4}). 
Hence the homomorphism (\ref{SurYiTt}) is surjective for $s=1$.

Suppose that $s\geq 1$. For any point $x\in Y\cap X^{(s+1)}$, there exists a 
$y\in Y\cap X^{(s)}$ such that $\bar{\{y\}}\cap\spec(\mathcal{O}_{X, x})$
is the spectrum of a discrete valuation ring. Then the homomorphism
\begin{equation*}
\HO_{y}^{s-1}(
Y_{\et}, \nu_{r}^{n-1}
)
\to
\HO_{x}^{s}(
Y_{\et}, \nu_{r}^{n-1}
)
\end{equation*}
corresponds to
\begin{equation*}
\HO^{0}_{\et}(
\kappa(y),
\nu_{r}^{n-s}
)    
\to
\HO^{0}_{\et}(
\kappa(x),
\nu_{r}^{n-s-1}
)
\end{equation*}
by \cite[p.718, Theorem 2.4.2]{SaL} and so is surjective. Moreover, we have a commutative diagram
\begin{equation*}
\xymatrix{
\HO_{y}^{n+s-1}
(Y_{\et}, 
\iota^{*}\tau_{\leq n}Rj_{*}\mu_{p^{r}}^{\otimes n})
\ar[r]\ar[d]
&
\HO_{y}^{s-1}(
Y_{\et}, \nu_{r}^{n-1}
)\ar[d]
\\
\HO_{x}^{n+s}
(Y_{\et}, 
\iota^{*}\tau_{\leq n}Rj_{*}\mu_{p^{r}}^{\otimes n})
\ar[r]
&
\HO_{x}^{s}(
Y_{\et}, \nu_{r}^{n-1}
)
}    
\end{equation*}
where the right map is surjective. Hence the statement follows by the induction on $s$.
\end{proof}
\begin{lem}\label{SAP}\upshape
Let $Z$ be a closed scheme of a scheme $X$ and
$\iota: Y\to X$ a closed immersion of $X$. Then we have an isomorphism
\begin{equation*}
\HO^{s}_{Z}(
X_{\et}, 
\iota_{*}\mathcal{F}^{\bullet}
)    
\xrightarrow{\simeq}
\HO^{s}_{Z\cap Y}
(
Y_{\et},
\mathcal{F}^{\bullet}
)
\end{equation*}
for any $s$ and any bounded below complex $\mathcal{F}^{\bullet}$
of torsion sheaves on $Y_{\et}$.    
\end{lem}
\begin{proof}
The statement follows from the proper base change theorem. 
Let
\footnotesize
\begin{equation*}
\xymatrix{
Y \ar[d]_{\iota} 
&
U^{\prime}\ar[l]_{j^{\prime}}\ar[d]^{\iota^{\prime}}  
\\
X &\ar[l]^{j} U
}    
\end{equation*}
\normalsize
be Cartesian. Here $U=X\setminus Z$. Since $\iota$ is proper, we have an isomorphism
\begin{equation*}
j^{*}\iota_{*}\mathcal{F}^{\bullet}
\xrightarrow{\simeq}
(\iota^{\prime})_{*}(j^{\prime})^{*}
\mathcal{F}^{\bullet}
\end{equation*}
by the proper base change theorem (\cite[pp.223--224, VI, Corollary 2.3]{M})
(or by \cite[p.69, II, Theorem 3.2 (a)]{M} 
and \cite[p.71, II, Corollary 3.5 (a)]{M}).
So we have isomorphisms
\begin{equation*}
\HO^{s}_{\et}(
X, 
Rj_{*}j^{*}(\iota_{*}\mathcal{F}^{\bullet})
)  
\xrightarrow{\simeq}
\HO^{s}_{\et}(
Y, 
R(j^{\prime})_{*}(j^{\prime})^{*}\mathcal{F}^{\bullet}
)
\end{equation*}
and
\begin{equation*}
\HO^{s}_{Z}(
X_{\et}, \iota_{*}\mathcal{F}^{\bullet}
)
\xrightarrow{\simeq}
\HO^{s}_{Y\cap Z}(
Y_{\et}, \mathcal{F}^{\bullet}
)
\end{equation*}
for any $s$.
\end{proof}
\begin{thm}\upshape\label{exijS}
Let $\mathfrak{X}$ be a regular semistable family over the spectrum of a discrete valuation ring $B$
of mixed characteristic $(0, p)$,
$\iota: Y\to \mathfrak{X}$ the inclusion of the closed fiber of $\mathfrak{X}$
and $j: U\hookrightarrow \mathfrak{X}$ the open complement
$U:=\mathfrak{X}\setminus Y$.
Let 
$n\geq 0$,
$r>0$,
$s\geq 1$ 
be integers
and $x\in Y\cap \mathfrak{X}^{(s)}$.
Then the distinguished triangle
\begin{align*}
j_{!}\mu^{\otimes n}_{p^{r}}
\to
\mathfrak{T}_{r}(n)
\to
\iota_{*}\iota^{*}\mathfrak{T}_{r}(n)
\end{align*}
(cf. \cite[pp.75--76, II, Remark 3.13]{M} and 
\cite[p.537, Definition 4.2.4]{SaP}) induces an exact sequence
\begin{align*}
0
\to
\HO_{x}^{s}(
Y_{\et},
\lambda_{r}^{n}
)
\to
\HO_{x}^{n+s+1}(
\mathfrak{X}_{\et},
j_{!}\mu_{p^{r}}^{\otimes n}
)
\to
\HO_{x}^{n+s+1}(
\mathfrak{X}_{\et},
\mathfrak{T}_{r}(n)
)
\to 0
\end{align*}
and an isomorphism
\begin{equation*}
\HO_{x}^{n+t}(
\mathfrak{X}_{\et}, j_{!}\mu_{p^{r}}^{\otimes n}
)
\xrightarrow{\simeq}
\HO_{x}^{n+t}(
\mathfrak{X}_{\et},
\mathfrak{T}_{r}(n)
)
\end{equation*}
for $t\geq s+2$.
\end{thm}
\begin{proof}\upshape
By Theorem \ref{iW} and Lemma \ref{SAP},
it suffices to show that the homomorphism 
\begin{equation}\label{Sji}
\HO^{n+s}_{x}(
\mathfrak{X}_{\et},
j_{!}\mu_{p^{r}}^{\otimes n}
)
\to
\HO^{n+s}_{x}(
\mathfrak{X}_{\et},
\mathfrak{T}_{r}(n)
)
\end{equation}
is surjective.
We have a commutative diagram
\begin{equation*}
\xymatrix{
\iota_{*}Ri^{!}\mathfrak{T}_{r}(n)
\ar@{=}[r]\ar[d]
&
\iota_{*}R\iota^{!}\mathfrak{T}_{r}(n)  \ar[d]
\\
\mathfrak{T}_{r}(n)
\ar[r]
&
\iota_{*}\iota^{*}\mathfrak{T}_{r}(n)
}    
\end{equation*}
where the horizontal arrows are induced by
the unit of the adjunction
$\operatorname{id}\to \iota_{*}\iota^{*}$.
So we have a commutative diagram
\begin{equation*}
\xymatrix{
\tau_{\leq n+1}\iota_{*}R\iota^{!}\mathfrak{T}_{r}(n)
\ar[r]\ar[d] 
&
\iota_{*}\iota^{*}\mathfrak{T}_{r}(n) \ar@{=}[d]
\\
\mathfrak{T}_{r}(n) \ar[r]
&
\iota_{*}\iota^{*}\mathfrak{T}_{r}(n)
}    
\end{equation*}
and this commutative diagram induces
a morphism of distinguished triangles
\begin{equation*}
\xymatrix{
\tau_{\leq n+1}\iota_{*}Ri^{!}\mathfrak{T}_{r}(n)
\ar[r]\ar[d]
&
\iota_{*}\iota^{*}\mathfrak{T}_{r}(n)
\ar[r]\ar@{=}[d]
&
\iota_{*}\tau_{\leq n}\iota^{*}Rj_{*}\mu_{p^{r}}^{\otimes n}  
\ar[d]
\\
\mathfrak{T}_{r}(n)
\ar[r]
&
\iota_{*}\iota^{*}\mathfrak{T}_{r}(n)
\ar[r]
&
j_{!}\mu_{p^{r}}^{\otimes n}[+1].
}    
\end{equation*}
Hence we have a commutative diagram
\begin{equation*}
\xymatrix{
\HO^{n+s-1}_{x}(
Y_{\et},
\iota^{*}\tau_{\leq n}Rj_{*}\mu_{p^{r}}^{\otimes n}
)
\ar[r]\ar[d]
&
\HO^{n+s}_{x}(
\mathfrak{X}_{\et},
\tau_{\leq n+1}
\iota_{*}Ri^{!}\mathfrak{T}_{r}(n)
) \ar[d]
\\
\HO^{n+s}_{x}(
\mathfrak{X}_{\et},
j_{!}\mu_{p^{r}}^{\otimes n}
)
\ar[r]  
&
\HO^{n+s}_{x}(
\mathfrak{X}_{\et},
\mathfrak{T}_{r}(n)
)
}    
\end{equation*}
where the right arrow is an isomorphism 
by Definition \ref{DTT} and 
Theorem \ref{PETP}.
Since the homomorphism (\ref{SurYiTt}) is surjective by Proposition \ref{Otit}, 
the homomorphism (\ref{Sji}) is surjective by the above commutative diagram. 
This completes the proof.

\end{proof}
\begin{cor}\upshape\label{exmut}
Let the notations be the same as in Theorem \ref{exijS}.
Suppose that $B$ contains $p$-th roots of unity.   
Then we have an exact sequence
\begin{equation*}
0\to  
\HO^{s}_{x}(
Y_{\et}, \lambda^{n}_{1}
)
\to
\HO^{n+s+1}_{x}(
\mathfrak{X}_{\et}, \mathfrak{T}_{1}(n-1)
)
\to
\HO^{n+s+1}_{x}(
\mathfrak{X}_{\et}, \mathfrak{T}_{1}(n)
)
\to 
0
\end{equation*}
and an isomorphism
\begin{equation*}
\HO^{n+s+1}_{x}(
\mathfrak{X}_{\et},
\mathfrak{T}_{1}(n-1)
)    
\simeq 
\HO^{n+s+1}_{x}(
\mathfrak{X}_{\et},
\mathfrak{T}_{1}(n-k)
)
\end{equation*}
for $1\leq k\leq n$.
\end{cor}
\begin{proof}\upshape
Since $B$ contains $p$-th roots of unity, 
we have an isomorphism
\begin{equation*}
\mu_{p}^{\otimes (n-k)} 
\simeq
\mu_{p}^{\otimes n}
\end{equation*}
for $1\leq k\leq n$. Hence the statement follows from Theorem \ref{exijS}.
\end{proof}
\begin{rem}\upshape\label{Usup}
Let $\mathfrak{X}$ be
a regular semistable family over the spectrum of a discrete valuation ring $B$
of mixed characteristic $(0, p)$,
$\iota: Y\to \mathfrak{X}$ the inclusion of the closed fiber of $\mathfrak{X}$
and $j: U\hookrightarrow \mathfrak{X}$ the open complement
$\mathfrak{X}\setminus Y$.
Let $\mathcal{F}$ be a sheaf on $Y_{\et}$.
Then we have
\begin{equation*}
\HO^{s}_{x}(
\mathfrak{X}_{\et},
\iota_{*}\mathcal{F}
)    
=0
\end{equation*}
for $x\in U$ and $s\geq 0$.
So we have an isomorphism
\begin{equation*}
\HO^{s}_{x}(
\mathfrak{X}_{\et}, j_{!}\mu_{p^{r}}^{\otimes n}
)
\xrightarrow{\sim}
\HO^{s}_{x}(
\mathfrak{X}_{\et}, \mathfrak{T}_{r}(n)
)
\end{equation*}
for $x\in U$ and $s\geq 0$.
If $B$ contains $p$-th roots of unity, then we have isomorphisms
\begin{equation*}
\HO_{x}^{s}(
\mathfrak{X}_{\et},
\mathfrak{T}_{1}(n-1)
)
\xleftarrow{\sim}   
\HO_{x}^{s}(
\mathfrak{X}_{\et}, j_{!}\mu_{p}^{\otimes (n-1)}
)
\simeq
\HO_{x}^{s}(
\mathfrak{X}_{\et},
j_{!}\mu_{p}^{\otimes n}
)
\xrightarrow{\sim}
\HO^{s}_{x}(
\mathfrak{X}_{\et},
\mathfrak{T}_{1}(n)
)
\end{equation*}
for $x\in U$ and $s\geq 0$.
\end{rem}
By using Corollary \ref{exmut}, we prove Theorem \ref{Intpre}.
Before proving Theorem \ref{Intpre}, we prove the followings:
\begin{lem}\upshape\label{comdel}
Let $X$ be a scheme, 
$\iota: Y\to X$
a closed immersion of $X$
and 
$j: U:=X\setminus Y\to X$
the open immersion of $X$.
Let $\mathcal{F}^{\bullet}$ be a bounded below complex of sheaves of
abelian groups on $X_{\et}$.
Then the diagram
\begin{equation*}
\xymatrix{
\HO^{n}_{\et}(
X,
\mathcal{F}^{\bullet}
)
\ar[r]
\ar[d]
&
\HO^{n}_{\et}(
X,
Rj_{*}j^{*}\mathcal{F}^{\bullet}
)
\ar[d]^{\delta}
\\
\HO^{n}_{\et}(
X,
\iota_{*}\iota^{*}\mathcal{F}^{\bullet}
\ar[r]_-{\delta^{\prime}}
)
&
\HO^{n+1}_{\et}(
X,
\iota_{*}R\iota^{!}(
j_{!}j^{*}\mathcal{F}^{\bullet}
)
)
}    
\end{equation*}
is anti-commutative. Here 
$\delta$ and $\delta^{\prime}$ are the connecting homomorphisms 
in the long exact cohomology sequences associated to
\begin{equation*}
\cdots\to 
\iota_{*}R\iota^{!}(j_{!}j^{*}\mathcal{F}^{\bullet})
\to
j_{!}j^{*}\mathcal{F}^{\bullet}
\to
Rj_{*}j^{*}\mathcal{F}^{\bullet}
\to\cdots
\end{equation*}
and
\begin{equation*}
\cdots\to 
\iota_{*}R\iota^{!}(j_{!}j^{*}\mathcal{F}^{\bullet})
\to
\iota_{*}R\iota^{!}\mathcal{F}^{\bullet}
\to
\iota_{*}\iota^{*}\mathcal{F}^{\bullet}
\to\cdots
\end{equation*}
respectively.
\end{lem}
\begin{proof}\upshape
By \cite[Lemma 13.18.9]{SP}, there is a commutative diagram
%
\begin{equation*}
\xymatrix{
0
\ar[r]
&
j_{!}j^{*}\mathcal{F}^{\bullet}
\ar[r]\ar[d]
&
\mathcal{F}^{\bullet}
\ar[r]\ar[d]
&
\iota_{*}\iota^{*}\mathcal{F}^{\bullet}
\ar[r]\ar[d]
&
0 \\
0
\ar[r]
&
I_{1}^{\bullet}
\ar[r]
&
I_{2}^{\bullet}
\ar[r]
&
I_{3}^{\bullet}
\ar[r]
&
0
}    
\end{equation*}
%
where the vertical arrows are injective resolutions and the rows are short exact sequences
of complexes.
Then the statement follows from the definition of the connecting homomorphism.

The detail of the proof is as follows.
Since 
\begin{math}
R^{s}\iota_{*}\iota^{!}I^{n}_{t}=0
\end{math}
for $s>0$ and $t=1, 2, 3$,
we have exact sequences
\begin{equation*}
0
\to
\iota_{*}\iota^{!}I^{n}_{t}
\to
I^{n}_{t}
\to
j_{*}j^{*}I^{n}_{t}
\to
0
\end{equation*}
for $t=1, 2, 3$ and
\begin{equation*}
0\to   
\iota_{*}\iota^{!}I^{n}_{1}
\to   
\iota_{*}\iota^{!}I^{n}_{2}
\to   
\iota_{*}\iota^{!}I^{n}_{3}
\to 0.
\end{equation*}
Since the functors $\iota_{*}$, $\iota^{!}$ preserve injective objectives,
the above exact sequences are also split. Since we have an isomorphism $j^{*}\iota_{*}\simeq 0$, 
the morphisms
\begin{equation*}
\iota_{*}\iota^{!}I^{\bullet}_{3}
\to
I^{\bullet}_{3}
\end{equation*}
and
\begin{equation*}
j_{*}j^{*}I^{\bullet}_{1}
\to
j_{*}j^{*}I^{\bullet}_{2}
\end{equation*}
are quasi-isomorphisms.
Moreover, $j_{*}j^{*}I_{1}^{\bullet}$ and $j_{*}j^{*}I_{2}^{\bullet}$
are also injective objects because 
the functors $j_{*}$, $j^{*}$ preserve injective objects.
Consider the commutative diagrams
\footnotesize
\begin{equation*}
\xymatrix{
&
\Gamma(X,
I_{1}^{n}
)
\ar[r]^{b^{n}_{1}}\ar[d]_{f^{n}}
&
\Gamma(X, j_{*}j^{*}I_{1}^{n})
\ar[d]_{h^{n}}
\ar[r]
&
0
\\
\Gamma(X, \iota_{*}\iota^{!}I_{2}^{n})
\ar[r]^{a^{n}_{2}}\ar[d]_{e^{n}}
&
\Gamma(X, I_{2}^{n})
\ar[r]^{b^{n}_{2}}\ar[d]_{g^{n}}
\ar[r]
&
\Gamma(X, j_{*}j^{*}I_{2}^{n})
\ar[r]
&
0
\\
\Gamma(X, \iota_{*}\iota^{!}I_{3}^{n})
\ar[r]^{a^{n}_{3}}
&
\Gamma(X, I_{3}^{n})
}    
\end{equation*}
\normalsize
where the sequences are exact. Then $a^{\bullet}_{3}$ and $h^{\bullet}$ are quasi-isomorphisms 
by \cite[p.41, Lemma 4.5]{H}.

Let
\begin{equation*}
d^{k}[\mathcal{A}^{\bullet}]: 
\Gamma(X, \mathcal{A}^{k})
\to
\Gamma(X, \mathcal{A}^{k+1})
\end{equation*}
be a boundary operator of
\begin{math}
\Gamma(X, \mathcal{A}^{\bullet})    
\end{math}
for an integer $k$ and a bounded
below complex $\mathcal{A}^{\bullet}$
of sheaves of abelian groups on $X_{\et}$.
Let $\tilde{x}$ be an element of
\begin{equation*}
\HO^{n}_{\et}(X, \mathcal{F}^{\bullet})
=
\operatorname{ker}\left(
d^{n}[I_{2}^{\bullet}]
\right)
/
\operatorname{im}\left(
d^{n-1}[I_{2}^{\bullet}]
\right)
\end{equation*}
and $x$ be an element of $\operatorname{ker}(d^{n}[I_{2}^{\bullet}])$
such that
\begin{math}
\tilde{x}=x+\operatorname{im}(d^{n-1}[I^{\bullet}_{2}]).    
\end{math}
Since $h^{\bullet}$ is a quasi-isomorphism and
$b^{n}_{1}$ is surjective,
there exists a 
\begin{math}
y\in
\Gamma(
X,
I^{n}_{1}
)
\end{math}
such that
\begin{equation*}
h^{n}\circ b_{1}^{n}(y)
-
b_{2}^{n}(x)
\in 
\operatorname{im}\left(
d^{n-1}[j_{*}j^{*}I^{\bullet}_{2}]
\right).
\end{equation*}
Moreover,
\begin{math}
(d^{n}[I_{1}^{\bullet}])(y)
\in \operatorname{ker}(d^{n+1}[\iota_{*}\iota^{!}(I^{\bullet}_{1})])
\end{math}
and then the image under the composite
\begin{equation*}
\HO^{n}_{\et}(
X, \mathcal{F}^{\bullet}
)
\to
\HO^{n}_{\et}(
X, Rj_{*}j^{*}\mathcal{F}^{\bullet}
)
\xrightarrow{\delta}
\HO^{n+1}_{\et}(
X, \iota_{*}R\iota^{!}(j_{!}j^{*}\mathcal{F}^{\bullet})
)
\end{equation*}
of a point $\tilde{x}$ is
\begin{equation*}
(d^{n}[I_{1}^{\bullet}])(y)
+
\operatorname{im}(
d^{n}[\iota_{*}\iota^{!}(I^{\bullet}_{1})]
).
\end{equation*}

On the other hand, there exists a 
$z\in \Gamma(X, \iota_{*}\iota^{!}I^{n}_{2})$
such that
\begin{equation}\label{yz}
a_{2}^{n}(z)
-   
\left(x-f^{n}(y)\right)
\in 
\operatorname{im}\left(
d^{n-1}[I^{\bullet}_{2}]
\right).
\end{equation}
Then
\begin{math}
(d^{n}[\iota_{*}\iota^{!}(I^{\bullet}_{2})])(z)
\in
\operatorname{ker}\left(
d^{n+1}[\iota_{*}\iota^{!}(I^{\bullet}_{1})]
\right)
\end{math}
and
the image under the composite
\begin{equation*}
\HO^{n}_{\et}(
X, \mathcal{F}^{\bullet}
)    
\to
\HO^{n}_{\et}(
X, \iota_{*}\iota^{*}\mathcal{F}^{\bullet}
)
\xrightarrow{\delta^{\prime}}
\HO^{n+1}_{\et}(
X,
\iota_{*}\iota^{!}(j_{!}j^{*}\mathcal{F}^{\bullet})
)
\end{equation*}
of a point $\bar{x}$ is
\begin{equation*}
(d^{n}[\iota_{*}\iota^{!}(I^{\bullet}_{2})])(z)
+
\operatorname{im}(
d^{n}[\iota_{*}\iota^{!}(I^{\bullet}_{1})]
).
\end{equation*}
Since $x\in \operatorname{ker}(d^{n}[I^{\bullet}_{2}])$,
we have
\begin{equation*}
a^{n+1}_{2}\circ(d^{n}[\iota_{*}\iota^{!}(I^{\bullet}_{2})])(z)
=
-f^{n+1}\circ(d^{n}[I^{\bullet}_{1}])(y)
\end{equation*}
by the relation (\ref{yz}). So the statement follows.
\end{proof}
Let $B$ be a discrete valuation ring of mixed characteristic $(0, p)$,
$\pi$ a prime element of $B$ and $\mathfrak{X}$ a smooth scheme over $\spec(B)$. 
Let $R$ be the local ring $\mathcal{O}_{\mathfrak{X}, x}$ of $\mathfrak{X}$ at a point $x$
or the henselization of $\mathcal{O}_{\mathfrak{X}, x}$.

Let $U=\spec(R)\setminus\spec(R/(\pi))$ and $j: U\to \spec(R)$ 
the inclusion of the generic fiber of $\spec(R)$.
Put
\begin{align*}
C^{s}=
\displaystyle\bigoplus_{x\in \spec(R/(\pi))^{(s-1)}}
\HO^{s}_{x}(
(R/(\pi))_{\et},
\lambda^{n}_{r}
),
&
&D^{s}=
\displaystyle\bigoplus_{x\in \spec(R)^{(s)}}
\HO^{n+s+1}_{x}(
R_{\et},
j_{!}\mu_{P^{r}}^{\otimes n}
)
\end{align*}
and
\begin{equation*}
E^{s}
=
\displaystyle\bigoplus_{x\in \spec(R)^{(s)}}
\HO^{n+s+1}_{x}(
R_{\et},
\mathfrak{T}_{r}(n)
)
\end{equation*}
for integers $s\geq 0$.
By Lemma \ref{spsqc} and Theorem \ref{exijS}, 
there is an exact sequence of cochain complexes
\begin{equation*}
0\to
C^{\bullet}
\to
D^{\bullet}
\to
E^{\bullet}
\to
0
\end{equation*}
and this exact sequence of cochain complexes induces the homomorphism
\begin{align*}
&\operatorname{ker}\left(
E^{0}\to E^{1}
\right)
\to
\operatorname{ker}\left(
C^{1}\to C^{2}
\right).
\end{align*}
So this homomorphism induces the homomorphism
\begin{equation}\label{frsp}
\HO^{n+1}_{\et}(
R, \mathfrak{T}_{r}(n)
)    
\to
\HO^{1}_{\et}(
R/(\pi), \lambda_{r}^{n}
)
\end{equation}
by \cite[p.42, Proposition 3.4]{Sak},
\cite[p.51, Theorem 4.6]{Sak} and \cite[p.600, Theorem 4.1]{Sh}.
\begin{lem}\label{Cornat}\upshape
Let the notations be the same as above.
Then the homomorphism (\ref{frsp}) is equal up to sign the homomorphism 
\begin{equation}\label{natred}
\HO^{n+1}_{\et}(
R, \mathfrak{T}_{r}(n)
)    
\to
\HO^{1}_{\et}(
R/(\pi), \lambda_{r}^{n}
)    
\end{equation}
which is induced by the unit of adjunction
\begin{math}
\mathfrak{T}_{r}(n)
\to
\iota_{*}\iota^{*}\mathfrak{T}_{r}(n)
\end{math}
and the morphism (\ref{FITFL}).
\end{lem}
\begin{proof}
Consider the diagram 
\begin{equation}\label{comrd}
\xymatrix{
\HO^{n+1}_{\et}(
R,
\mathfrak{T}_{r}(n)
)
\ar[r]\ar[d]
&
\HO^{1}_{\et}(
R/(\pi), 
\lambda_{r}^{n}
) \ar[d] 
\\
\HO^{n+1}_{\et}(
R_{(\pi)},
\mathfrak{T}_{r}(n)
)
\ar[r]
&
\HO^{1}_{\et}(
\kappa((\pi)),
\lambda_{r}^{n}
)
}    
\end{equation}
where the vertical arrows are the natural map and the horizontal arrows  are the homomorphisms (\ref{frsp}).

Then the diagram (\ref{comrd}) is commutative.
If the horizontal arrows in the diagram (\ref{comrd}) are the homomorphisms (\ref{natred}), the diagram (\ref{comrd}) is also commutative. Moreover, the right map in the diagram (\ref{comrd}) is injective by
\cite[p.600, Theorem 4.1]{Sh}. So it suffices to show the statement in the case where $R$ is a discrete valuation ring of
mixed characteristic case $(0, p)$.
Hence the statement follows from Lemma \ref{comdel}.
\end{proof}
\begin{thm}\upshape\label{premain}
Let $B$ be a discrete valuation ring of mixed characteristic
$(0, p)$ and $\pi$ a prime element of $B$.
Let $\mathfrak{X}$ be a smooth scheme over $\spec(B)$
and 
$R=\mathcal{O}_{\mathfrak{X}, x}^{h}$ the henselization
of the local ring of $\mathfrak{X}$ at $x\in \mathfrak{X}$.
Suppose that $B$ contains $p$-th roots of unity.
Then the Cousin complex
%
\begin{align*}
0
\to
& \HO^{n+1}_{\et}(
R, \mathbb{Z}/p(n-r)
)
\to
\bigoplus_{x\in \spec(R)^{(0)}}
\HO_{x}^{n+1}(R_{\et}, \mathbb{Z}/p(n-r))
\to \\
& \bigoplus_{x\in \spec(R)^{(1)}}
\HO_{x}^{n+2}(R_{\et}, \mathbb{Z}/p(n-r))
\to
\cdots
\end{align*}

%
is exact
for $r=0, 1$.
\end{thm}
\begin{proof}\upshape
Let 
$\iota: Y=\spec(R/(\pi))\to \spec(R)$
be the inclusion of the closed fiber of
$\spec(R)$ and 
$j: U\hookrightarrow \spec(R)$
the open complement $\spec(R)\setminus Y$.
Since $X$ is a smooth scheme over $\spec(B)$, we have an isomorphism
\begin{equation*}
\mathbb{Z}/p^{r}(n)^{\spec(R)}_{\et} 
\simeq
\mathfrak{T}_{r}(n)
\end{equation*}
in $D^{+}(\spec(R)_{\et}, \mathbb{Z}/p^{r}\mathbb{Z})$ 
by \cite[p.524, Conjecture 1.4.1]{SaP}. 
Here 
$D^{+}(\spec(R)_{\et}, \mathbb{Z}/m\mathbb{Z})$ 
is the derived category of 
bounded complexes of \'{e}tale 
$\mathbb{Z}/m\mathbb{Z}$-sheaves on $X$. 
Moreover, we have isomorphisms
\begin{equation*}
\mathbb{Z}/p^{r}(n)_{\et}^{U}
\simeq
\mu_{p^{r}}^{\otimes n}
\end{equation*}
in $D^{+}(U_{\et}, \mathbb{Z}/p^{r}\mathbb{Z})$ and
\begin{equation*}
\mathbb{Z}/p^{r}(n)^{Z}_{\et}
\simeq
\nu^{n}_{r}[-n]
\end{equation*}
in $D^{+}(Y_{\et}, \mathbb{Z}/p^{r}\mathbb{Z})$ 
by \cite[Theorem 1.5]{Ge-L2} and \cite[p.787, \S 5, (12)]{Ge}.
Here $\mu_{p^{r}}$ is the sheaf of $p^{r}$-th roots of unity.

It suffices to show the statement in the case where 
$Y\neq\emptyset$. 
Let $R^{\prime}$ be an equidimensional Noetherian local ring.
Then we have the spectral sequence
\begin{equation}\label{gsp}
E^{s, t}_{1}(R^{\prime}, n)=
\bigoplus_{x\in\spec(R^{\prime})^{(s)}}
\HO^{s+t}_{x}(
R^{\prime}_{\et}, 
\mathbb{Z}/p(n)
)
\Rightarrow
E^{s+t}(
R^{\prime}, n
)
=
\HO^{s+t}_{\et}(
R^{\prime}, 
\mathbb{Z}/p(n)
)
\end{equation}
by Theorem \ref{sp}. We prove the statement by using the spectral sequence (\ref{gsp}).  
By \cite[p.52, Corollary 4.7]{Sak}, we have
\begin{equation*}
E^{0, n+1}_{2}(R, n)
= 
E^{n+1}(R, n)
\end{equation*}
for any non-negative integer $n$. 
Let $R^{h}_{(\pi)}$ be the henselization of the local ring $R_{(\pi)}$.
Since we have an isomorphism
\begin{equation*}
\HO^{n+v}_{\et}(
R^{h}_{(\pi)}, 
j_{!}\mu_{p}^{\otimes n}
)  
\simeq 0
\end{equation*}
by \cite[p.777, The proof of Proposition 2.2.b)]{Ge},
we have an isomorphism
\begin{equation*}
\HO_{\et}^{n+v}(
k(R^{h}_{(\pi)}),
\mu_{p}^{\otimes n}
)    
\simeq
\HO_{(\pi)}^{n+v+1}(
(R^{h}_{(\pi)})_{\et},
j_{!}\mu_{p}^{\otimes n}
).
\end{equation*}
So we have
\begin{equation*}
E^{0, n+v}_{2}(R, n)
=0
\end{equation*}
for $v\geq 2$
by \cite[p.61, Proposition 6.1]{Sak} and Theorem \ref{exijS}. 
Since we have an isomorphism
\begin{equation*}
\HO^{n+v}_{\et}(
R,
\mathbb{Z}/p(n)
)    
\simeq
\HO^{n+v}_{\et}(
R/(\pi),
\iota^{*}(\mathbb{Z}/p(n))
) 
\end{equation*}
by \cite[p.777, The proof of Proposition 2.2.b)]{Ge}, we have an isomorphism
\begin{equation}\label{isoRz}
\HO^{n+v}_{\et}(
R,
\mathbb{Z}/p(n)
) 
\simeq 0
\end{equation}
for $v\geq 2$ by \cite[Expos\'{e} X, Th\'{e}or\`{e}me 5.1]{SGA4} and
the spectral sequence
\begin{equation*}
E^{s, t}_{2}
=\HO^{s}_{\et}(
R/(\pi),
\mathcal{H}^{t}(
\iota^{*}\mathbb{Z}/p(n)
)
)
\Rightarrow
\HO^{s+t}_{\et}(
R/(\pi),
\iota^{*}\mathbb{Z}/p(n)
).
\end{equation*}
Consequently, we have
\begin{equation*}
E^{0, n+v}_{2}(R, n)=E^{n+v}(R, n)
=0
\end{equation*}
for $v\geq 2$. So it suffices to show that
\begin{equation}\label{1E2v}
E^{s, n+v}_{2}(R, n)=0    
\end{equation}
for $s\geq 1$ and $v=1, 2$.
We prove the equation (\ref{1E2v}) by induction on $s$.

First we show the equation (\ref{1E2v}) in the case where $s=1$.
By the spectral sequence (\ref{gsp}) and the isomorphism (\ref{isoRz}), 
we have
\begin{equation*}
E^{1, n+1}_{\infty}(R, n)
=
E^{n+2}(R, n)
=0.
\end{equation*}
By Proposition \ref{PTT}, we have
\begin{equation*}
E^{1+r, n+2-r}_{r}(R, n)=0    
\end{equation*}
for $r\geq 2$. Hence we have
\begin{equation}\label{1En+1}
E^{1, n+1}_{2}(R, n)
=
E^{1, n+1}_{3}(R, n)
=
\cdots 
=
E^{1, n+1}_{\infty}(R, n)
=0.
\end{equation}
By Lemma \ref{spsqc}, Corollary \ref{exmut} and Remark \ref{Usup}, we have an exact sequence
\begin{equation*}
E^{0, n+1}_{2}(R, n)
\to
E^{0, n+1}_{2}(R/(\pi), n)
\to
E^{1, n+1}_{2}(R, n-1)
\to
E^{1, n+1}_{2}(R, n).
\end{equation*}
Moreover, we have
\begin{equation*}
E^{0, n+1}_{2}(R/(\pi), n)
=
E^{n+1}(R/(\pi), n)
\end{equation*}
by \cite[p.600, Theorem 4.1]{Sh}.
So the map
\begin{equation*}
E^{0, n+1}_{2}(R, n)
\to
E^{0, n+1}_{2}(R/(\pi), n)
\end{equation*}
is surjective by Lemma \ref{Cornat} and \cite[p.59, Theorem 5.6]{Sak}. 
Hence we have
\begin{equation*}
E^{1, n+1}_{2}(R, n-1)
=0
\end{equation*}
by the equation (\ref{1En+1}). Therefore we have the equation (\ref{1E2v}) in the case where $s=1$.

Next we assume that the equation (\ref{1E2v}) holds in the case where
$1\leq s\leq s^{\prime}$. By the spectral sequence (\ref{gsp}) and the isomorphism (\ref{isoRz}), 
we have
\begin{equation*}
E^{s^{\prime}+1, n+1}_{\infty}(R, n)
=
E^{s^{\prime}+n+2}(R, n)
=
0.
\end{equation*}
By the assumption of induction, we have
\begin{equation*}
E_{r}^{s^{\prime}-r+1, n+r}(R, n)
=
0
\end{equation*}
for $r\geq 2$.
Moreover, we have
\begin{equation*}
E_{r}^{s^{\prime}+r+1, n-r+2}(R, n)=0    
\end{equation*}
for $r\geq 2$ by Proposition \ref{PTT}. So we have
\begin{equation}\label{IAE}
E_{2}^{s^{\prime}+1, n+1}(R, n)
=
E_{3}^{s^{\prime}+1, n+1}(R, n)
=
\cdots
=
E^{s^{\prime}+1, n+1}_{\infty}(R, n)
=
0.
\end{equation}
Moreover, we have an exact sequence
\begin{equation*}
E_{2}^{s^{\prime}, n+1}(R/(\pi), n)
\to
E_{2}^{s^{\prime}+1, n+1}(R, n-1)
\to
E_{2}^{s^{\prime}+1, n+1}(R, n)
\end{equation*}
by Lemma \ref{spsqc}, Corollary \ref{exmut} and Remark \ref{Usup}.
Hence we have
\begin{equation*}
E_{2}^{s^{\prime}+1, n+1}
(R, n-1)=0
\end{equation*}
by \cite[p.600, Theorem 4.1]{Sh} and the equation (\ref{IAE}). 
Therefore the equation (\ref{1E2v}) holds in the case where $s=s^{\prime}+1$.
This completes the proof.
\end{proof}

\section{The surjectivity of the map between the mod $p$ \'{e}tale motivic cohomology groups}\label{SKH}
In this section, we prepare the proof of Theorem \ref{IntGCL}.

Let $A$ be a regular local ring of characteristic $p>0$.
Let 
\begin{math}
\Omega^{n}_{A}
\end{math}
be the exterior algebra 
\begin{math}
\Omega^{n}_{\spec (A)/\mathbb{Z}}
\end{math}
over
$\mathcal{O}_{\spec (A)}$
of the sheaf
\begin{math}
\Omega^{1}_{\spec (A)/\mathbb{Z}}
\end{math}
of the absolute differentials on
$\spec (A)$ and define the subsheaf 
$\mathfrak{B}^{n}$
of $\Omega^{n}_{A}$ by
\begin{equation*}
\mathfrak{B}^{n}
=
\operatorname{Im}\left(
d:\Omega^{n-1}_{A}
\to
\Omega^{n}_{A}
\right).
\end{equation*}
Then we have
\begin{equation*}
\HO^{0}_{\et}
(A, 
\Omega^{n}_{A}
/\mathfrak{B}^{n}
)
=
\Omega_{A}^{n}/\operatorname{B}^{n}_{A}
\end{equation*}
by \cite[Lemma 5.2]{Sak}. 
Here $\operatorname{B}^{n}_{A}$ denotes
$\mathfrak{B}^{n}(\spec (A))$.
So we have a surjective homomorphism
\begin{equation*}
\delta_{n}:
\Omega^{n}_{A}
/\operatorname{B}^{n}_{A}
\to
\HO_{\et}^{1}
(
A, 
\Omega_{A, \log}
)
\end{equation*}
by the exact sequence
\begin{equation*}
0\to\Omega^{n}_{A, \log}
\to\Omega^{n}_{A}
\to\Omega^{n}_{A}/\mathfrak{B}^{n}
\to 0
\end{equation*}
(cf. \cite[p.576, Proposition 2.8]{Sh}). Here 
$\Omega^{n}_{A, \log}$ denotes $W_{1}\Omega_{\spec(A), \log}$.
Then we have the following:
\begin{lem}\upshape\label{Cpf}
Let $A$ be a regular local ring of characteristic $p>0$. 
Then the diagram
\begin{equation*}
\xymatrix{
A\otimes (A^{*})^{\otimes n}
\ar[r]^-{\gamma_{n}}
\ar[d]_{
\delta_{0}\otimes\psi_{n}
} 
&
\Omega_{A}^{n}/\operatorname{B}^{n}_{A}
\ar[d]^-{\delta_{n}}  \\
\HO^{1}_{\et}(A, \mathbb{Z}/p)
\otimes
\HO^{0}_{\et}(A, \Omega^{n}_{A, \log})
\ar[r]_-\cup &
\HO^{1}_{\et}(A, \Omega^{n}_{A, \log})
}    
\end{equation*}
is commutative.
Here 
we write the homomorphisms
\begin{equation*}
\gamma_{n}:  
A\otimes (A^{*})^{\otimes n}
\to
\Omega_{A}^{n}/\operatorname{B}^{n}_{A};~~ 
\gamma_{n}(
x\otimes y_{1}\otimes\cdots
\otimes y_{n}
)
=
x\frac{dy_{1}}{y_{1}}
\wedge\cdots\wedge
\frac{dy_{n}}{y_{n}}
\bmod \operatorname{B}^{n}_{A}
\end{equation*}
and
\begin{equation*}
\psi_{n}: (A^{*})^{\otimes n}\to 
\HO^{0}_{\et}(
A,
\Omega^{n}_{A, \log}
); ~~
\psi_{n}(
y_{1}
\otimes\cdots\otimes
y_{n}
)
=
\frac{dy_{1}}{y_{1}}
\wedge\cdots\wedge
\frac{dy_{n}}{y_{n}}.
\end{equation*}
\end{lem}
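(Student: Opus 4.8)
The plan is to exhibit both $\delta_{0}$ and $\delta_{i}$ as the connecting homomorphisms on $\HO^{0}_{\et}$ attached to the short exact sequences defining the logarithmic de Rham--Witt sheaves, and then to deduce the commutativity from the naturality of the connecting map with respect to a single morphism of short exact sequences, namely wedging with the fixed logarithmic form. Since the diagram is additive in all its entries, I would fix an element $x\in A$ and units $y_{1},\dots,y_{i}\in A^{*}$, write $\omega=\psi_{i}(y_{1}\otimes\cdots\otimes y_{i})=\tfrac{dy_{1}}{y_{1}}\wedge\cdots\wedge\tfrac{dy_{i}}{y_{i}}\in\HO^{0}_{\et}(A,\nu^{i}_{1})$, and reduce everything to the single identity $\delta_{i}(x\omega)=\delta_{0}(x)\cup\omega$ inside $\HO^{1}_{\et}(A,\nu^{i}_{1})$. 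Under the identification $\HO^{0}_{\et}(A,\Omega^{i}_{\spec(A)}/\mathfrak{B}^{i})=\Omega^{i}_{A}/\operatorname{B}^{i}_{A}$ used above, the class $x\omega \bmod \operatorname{B}^{i}_{A}$ is exactly $\gamma_{i}(x\otimes y_{1}\otimes\cdots\otimes y_{i})$, so this identity is precisely the asserted commutativity.

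The core of the argument is a morphism of short exact sequences of \'etale sheaves obtained by multiplying the sequence for $\nu^{0}_{1}=\mathbb{Z}/p$ by the logarithmic form $\omega$:
\begin{equation*}
\xymatrix{
0 \ar[r] & \mathbb{Z}/p \ar[r]\ar[d]^{\cdot\,\omega} & \Omega^{0}_{\spec(A)} \ar[r]\ar[d]^{\cdot\,\omega} & \Omega^{0}_{\spec(A)}/\mathfrak{B}^{0} \ar[r]\ar[d]^{\cdot\,\omega} & 0 \\
0 \ar[r] & \nu^{i}_{1} \ar[r] & \Omega^{i}_{\spec(A)} \ar[r] & \Omega^{i}_{\spec(A)}/\mathfrak{B}^{i} \ar[r] & 0.
}
\end{equation*}
The left vertical map sends $1\mapsto\omega$ (the constant section lands in $\nu^{i}_{1}$ because $\omega$ is a $d\log$-form), the middle map is $\eta\mapsto\eta\,\omega$, and the right map is induced on the quotients. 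The essential point is that the right-hand square commutes: the third map of each row is the Artin--Schreier-type map $\operatorname{pr}-C^{-1}$ coming from \cite[p.576, Proposition 2.8]{Sh}, where $C^{-1}\colon\Omega^{i}_{\spec(A)}\to\Omega^{i}_{\spec(A)}/\mathfrak{B}^{i}$ is the inverse Cartier operator, and for $\eta\in\Omega^{0}_{\spec(A)}=\mathcal{O}$ one has $(\operatorname{pr}-C^{-1})(\eta\,\omega)=\eta\,\omega-C^{-1}(\eta)\,C^{-1}(\omega)=(\eta-\eta^{p})\,\omega$, using the multiplicativity of $C^{-1}$ together with $C^{-1}(\omega)\equiv\omega \bmod\mathfrak{B}^{i}$, which is exactly the characterization of $\omega$ as a logarithmic form. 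This equals $\big((\operatorname{pr}-C^{-1})(\eta)\big)\cdot\omega$, so the square commutes.

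By naturality of the connecting homomorphism, the induced square relating $\delta_{0}$ and $\delta_{i}$ through multiplication by $\omega$ commutes, i.e. $\delta_{i}(x\,\omega)$ equals the image of $\delta_{0}(x)$ under the map $\HO^{1}_{\et}(A,\mathbb{Z}/p)\to\HO^{1}_{\et}(A,\nu^{i}_{1})$ induced by $\mathbb{Z}/p\to\nu^{i}_{1},\ 1\mapsto\omega$. It then remains to identify this induced map with $-\cup\,\omega$: cup product with a global section $\omega\in\HO^{0}_{\et}(A,\nu^{i}_{1})$, computed via the module pairing $\mathbb{Z}/p\otimes\nu^{i}_{1}\to\nu^{i}_{1}$ (which is the only pairing available, $\nu^{i}_{1}$ being a sheaf of $\mathbb{F}_{p}$-vector spaces), is by definition the functorial map induced by $1\mapsto\omega$. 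Combining these gives $\delta_{i}(x\,\omega)=\delta_{0}(x)\cup\omega=\delta_{0}(x)\cup\psi_{i}(y_{1}\otimes\cdots\otimes y_{i})$, which is the commutativity of the square.

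I expect the main obstacle to be the verification that the wedging maps genuinely form a morphism of the two short exact sequences, i.e. the commutativity of the right-hand square; this rests on the multiplicativity of the inverse Cartier operator and the invariance $C^{-1}(\omega)\equiv\omega$ of logarithmic forms, which is the technical heart of the calculation. A secondary point requiring care is the matching of the abstract cup product in \'etale cohomology with the concrete "multiply by the global logarithmic section" map; this is the standard fact that cupping with a degree-zero class is functoriality along the coefficient pairing, but I would state it explicitly to make the identification $-\cup\,\omega$ unambiguous.
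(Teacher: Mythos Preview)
Your argument is correct and takes a genuinely different route from the paper. The paper first reduces to the case where $A$ is a field by invoking the injectivity of $\HO^{1}_{\et}(A,\nu^{i}_{1})\to\HO^{1}_{\et}(k(A),\nu^{i}_{1})$, and then, over a field, identifies \'etale cohomology with Galois cohomology and appeals to the explicit cocycle description of $\delta_{i}$ and of the cup product. Your approach instead stays over the regular local ring $A$ and works at the level of \'etale sheaves: you build a morphism from the Artin--Schreier sequence to the sequence $0\to\nu^{i}_{1}\to\Omega^{i}\to\Omega^{i}/\mathfrak{B}^{i}\to 0$ by wedging with the fixed logarithmic form $\omega$, check commutativity using the multiplicativity of $C^{-1}$ and the identity $C^{-1}(\omega)\equiv\omega$, and then invoke naturality of the connecting map together with the standard identification of cup product with a degree-zero class. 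This buys you a self-contained sheaf-theoretic proof that avoids the auxiliary reduction step and the dependence on Galois-cohomological formulas; the paper's route, on the other hand, outsources the verification to well-known references once the problem is over a field. Both are short; yours is arguably more conceptual.
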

\begin{proof}\upshape
Since the homomorphism
\begin{equation*}
\HO^{1}_{\et}(A, \Omega^{n}_{A, \log})
\to
\HO^{1}_{\et}(k(A), \Omega^{n}_{A, \log})
\end{equation*}
is injective, 
the proof of the statement is reduced to
the case where $A$ is a field.

Suppose that $A$ is a field. 
By \cite[p.53, II, Theorem 1.9]{M},
\'{e}tale cohomology of a field
coincides with Galois cohomology.
Hence the statement follows from the definition of
$\delta_{n}$ (cf. \cite[p.27, I, (1.3.2) Theorem]{N}) and
the definition of the cup product 
(cf. \cite[p.37, I, \S 4]{N}).
\end{proof}
Let $\mathfrak{X}$ be a regular semistable family over the spectrum of
a discrete valuation ring of mixed characteristic $(0, p)$
and $\iota: Y\to \mathfrak{X}$ the inclusion of
the closed fiber. Let $r>0$ be an integer.
Let $\lambda^{n}_{Y, r}$ be the sheaf on $Y_{\et}$
which is defined in
\cite[p.726, Definition 3.1.1]{SaL}, i.e.,
\begin{equation*}
\lambda^{n}_{Y, r}
=
\operatorname{Im}\left(
(\mathbb{G}_{m, Z})^{\otimes n}
\to
\bigoplus_{y\in Y^{(0)}}
(i_{y})_{*}
W_{r}\Omega^{n}_{\spec(\kappa(y)), \log}
\right)
\end{equation*}
and
$\mathfrak{T}_{r}(n)$ be the $p$-adic Tate twist (cf. \cite[p.537, Definition 4.2.4]{SaP}).
By \cite[p.537, (4.2.5) in Definition 4.2.4]{SaP} and \cite[p.533, Theorem 3.4.2]{SaP}, 
we define the homomorphism
\begin{equation}\label{PrTN0}
\iota^{*}: \iota^{*}\mathfrak{T}_{r}(n)
\to
\lambda^{n}_{r}[-n]
\end{equation}
as the composite map of \'{e}tale sheaves
\begin{equation*}
\iota^{*}\mathfrak{T}_{r}(n)
\to 
\iota^{*}\tau_{\geq n}\mathfrak{T}_{r}(n)
\simeq
\mathcal{H}^{n}(
\iota^{*}\mathfrak{T}_{r}(n))[-n]
\simeq
FM^{n}_{r}[-n]
\to
\lambda^{n}_{r}[-n]
\end{equation*}
where the last map
sends
a symbol $\{x_{1}, \cdots, x_{n}\}$ 
to
\begin{math}
\frac{d\bar{x_{1}}}{\bar{x_{1}}}
\wedge\cdots\wedge
\frac{d\bar{x_{n}}}{\bar{x_{n}}}.
\end{math}
Here $j:U\to \mathfrak{X}$ is the inclusion of the generic fiber
and
$FM^{n}_{r}$ is the \'{e}tale subsheaf of $\iota^{*}R^{n}j_{*}\mu_{p^{r}}^{\otimes n}$
which is defined in \cite[\S 3.4]{SaP}. Then we have the following:
\begin{lem}\upshape\label{PullC}
Let the notations be the same as above.
Then the diagram
\begin{align*}
\xymatrix{
\iota^{*}\mathfrak{T}_{r}(n)\otimes \iota^{*}\mathfrak{T}_{r}(n^{\prime}) 
\ar[r]^-{\cup}\ar[d]
& \iota^{*}\mathfrak{T}_{r}(n+n^{\prime}) 
\ar[d]  \\
\lambda^{n}_{r}[-n]\otimes\lambda^{n^{\prime}}_{r}[-n^{\prime}]  
\ar[r]_-{\cup}
& \lambda^{n+n^{\prime}}_{r}[-(n+n^{\prime})]
}    
\end{align*}
is commutative. Here the upper map is
the cup product which is defined in 
\cite[p.538, Proposition 4.2.6]{SaP}.
\end{lem}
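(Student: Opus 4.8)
The plan is to verify the commutativity by propagating the cup product through each stage of the definition (\ref{PrTN0}) of the projection $i^{*}$. Since the target $\lambda^{n+n^{\prime}}_{r}[-(n+n^{\prime})]$ is concentrated in a single degree, the composite map $i^{*}$ factors through the canonical projection of $i^{*}\mathfrak{T}_{r}(n+n^{\prime})$ onto its top cohomology sheaf; hence it suffices to prove commutativity after passing to the induced maps on cohomology sheaves $\mathcal{H}^{\bullet}$.

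First I would establish that the cup product of \cite[p.538, Proposition 4.2.6]{SaP} is compatible with the canonical maps $\mathfrak{T}_{r}(n)\to\tau_{\geq n}\mathfrak{T}_{r}(n)$, i.e. that it descends to a pairing $i^{*}\tau_{\geq n}\mathfrak{T}_{r}(n)\otimes i^{*}\tau_{\geq n^{\prime}}\mathfrak{T}_{r}(n^{\prime})\to i^{*}\tau_{\geq n+n^{\prime}}\mathfrak{T}_{r}(n+n^{\prime})$ fitting in a commutative square with the original cup product. This yields a product on the top cohomology sheaves $\mathcal{H}^{n}(i^{*}\mathfrak{T}_{r}(n))$, and under the isomorphism $\mathcal{H}^{n}(i^{*}\mathfrak{T}_{r}(n))\simeq FM^{n}_{r}$ of \cite[p.533, Theorem 3.4.2]{SaP} this product is identified with the natural multiplication on the subsheaves $FM^{n}_{r}$.

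It then remains to check that the final map of (\ref{PrTN0}), sending a symbol $\{x_{1},\cdots,x_{n}\}$ to $\frac{d\bar{x_{1}}}{\bar{x_{1}}}\wedge\cdots\wedge\frac{d\bar{x_{n}}}{\bar{x_{n}}}$, is multiplicative. Because the sheaves $FM^{n}_{r}$ and $\lambda^{n}_{r}$ inject into the logarithmic de Rham--Witt sheaves at the points of $Y^{(0)}$, this reduces to a computation over a field of characteristic $p$, where the identity

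\begin{equation*}
\left(\frac{d\bar{x_{1}}}{\bar{x_{1}}}\wedge\cdots\wedge\frac{d\bar{x_{n}}}{\bar{x_{n}}}\right)\wedge\left(\frac{d\bar{y_{1}}}{\bar{y_{1}}}\wedge\cdots\wedge\frac{d\bar{y_{n^{\prime}}}}{\bar{y_{n^{\prime}}}}\right)=\frac{d\bar{x_{1}}}{\bar{x_{1}}}\wedge\cdots\wedge\frac{d\bar{y_{n^{\prime}}}}{\bar{y_{n^{\prime}}}}
\end{equation*}

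matches the product rule $\{x_{1},\cdots,x_{n}\}\cup\{y_{1},\cdots,y_{n^{\prime}}\}=\{x_{1},\cdots,x_{n},y_{1},\cdots,y_{n^{\prime}}\}$ for symbols, exactly as in the field-level computation underlying Lemma \ref{Cpf}.

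The hard part will be the first structural step: unwinding the construction of the cup product in \cite[p.538, Proposition 4.2.6]{SaP} at the level of the filtered complex defining $\mathfrak{T}_{r}(\bullet)$ and verifying that it respects both the truncation $\tau_{\geq n}$ and the filtration used in \cite[p.533, Theorem 3.4.2]{SaP} to identify $\mathcal{H}^{n}(i^{*}\mathfrak{T}_{r}(n))$ with $FM^{n}_{r}$; one must also confirm that the Koszul signs introduced by the shifts $[-n]$ and $[-n^{\prime}]$ agree on both routes through the diagram. Once this compatibility is secured, the diagram reduces to the elementary symbol identity above and the conclusion follows.
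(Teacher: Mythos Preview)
Your proposal is correct and follows essentially the same route as the paper: the paper's proof is the single sentence ``By the definition of $i^{*}: i^{*}\mathfrak{T}_{r}(n)\to\lambda^{n}_{r}[-n]$, the statement follows immediately,'' and what you have written is precisely the unpacking of that sentence through each stage of the composite (\ref{PrTN0}).

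One remark: the step you flag as the ``hard part'' is less delicate than you suggest. The cup product of \cite[p.538, Proposition 4.2.6]{SaP} is constructed so as to be compatible with the truncations $\tau_{\leq n}$ (and hence with their quotients $\tau_{\geq n}$) and with the identification $\mathcal{H}^{n}(i^{*}\mathfrak{T}_{r}(n))\simeq FM^{n}_{r}$; this compatibility is part of the package in \cite{SaP} rather than something requiring a separate verification here. Once that is granted, the only content is the symbol computation you give at the end, which is why the paper regards the lemma as immediate.
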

\begin{proof}\upshape
By the definition of 
$\iota^{*}: \iota^{*}\mathfrak{T}_{r}(n)\to\lambda^{n}_{r}[-n]$, 
the statement directly follows.
\end{proof}
\begin{lem}\upshape\label{mun}
Let $R$ be a local ring of mixed characteristic
$(0, p)$. Let $a\in R$ and
$P(X)$
be a monic polynomial
\begin{math}
X^{p}-X-a\in R[X].
\end{math}
Then we have
\begin{equation*}
(P, P^{\prime})=R[X]   
\end{equation*}
where $P^{\prime}(X)$
is the formal derivative of
$P(X)$.
\end{lem}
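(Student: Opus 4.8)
The plan is to show that the ideal $(P,P')\subseteq R[X]$ contains a unit, which forces $(P,P')=R[X]$; equivalently, I want $1\in(P,P')$. The essential observation is that although $P'=pX^{p-1}-1$ is not a constant over $R$, its reduction modulo the maximal ideal $\mathfrak m$ of $R$ is: since $R$ has residue characteristic $p$, one has $p=0$ in $k=R/\mathfrak m$, so $P'$ becomes $-1$, a unit, while $\bar P=X^p-X-\bar a$ is the separable Artin--Schreier polynomial. The whole proof is thus a matter of leveraging this separability modulo $\mathfrak m$ and lifting it back to $R$ by a Nakayama-type argument.

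The cleanest route is to pass to the finite $R$-algebra $S=R[X]/(P)$, which is free of rank $p$ because $P$ is monic. The assertion $(P,P')=R[X]$ is equivalent to the image $\bar{P'}\in S$ being a unit. Since $S$ is finite, hence integral, over the local ring $R$, every maximal ideal of $S$ contracts to $\mathfrak m$, so $\mathfrak m S$ lies in the Jacobson radical of $S$; therefore $\bar{P'}$ is a unit in $S$ if and only if its image in $S/\mathfrak m S=k[X]/(X^p-X-\bar a)$ is a unit. As $p=0$ in $k$, that image equals $-1$, which is visibly a unit, so $\bar{P'}$ is a unit and the claim follows.

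Alternatively, and more explicitly, I would work inside $(P,P')$ directly. From $XP'-pP=(p-1)X+pa$ one sees that the linear polynomial $(p-1)X+pa$ lies in $(P,P')$. Here $p-1$ is a unit of $R$, because it reduces to $-1$ modulo $\mathfrak m$, so $X+b\in(P,P')$ with $b=pa/(p-1)$. Dividing $P'(X)-P'(-b)$ by $X+b$ then shows that the constant $P'(-b)=p(-b)^{p-1}-1$ lies in $(P,P')$; since it again reduces to $-1$ modulo $\mathfrak m$ it is a unit of $R$, and hence $1\in(P,P')$.

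The only genuine subtlety — the point to watch rather than a serious obstacle — is the mixed-characteristic bookkeeping: $p$ is neither invertible nor zero in $R$, so one may not simply declare $P'=-1$. One must retain $P'=pX^{p-1}-1$ over $R$ and invoke $p\in\mathfrak m$ only at the moment of reducing modulo $\mathfrak m$ (in the first route) or to recognize that $p-1$ and $P'(-b)$ are units (in the second). Either way the statement collapses to the classical fact that the Artin--Schreier polynomial is separable over a field of characteristic $p$, which needs no further argument.
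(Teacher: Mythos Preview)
Your proof is correct. Your second, explicit route is essentially identical to the paper's argument: the paper computes $pP - XP' = (1-p)X - pa$, extracts $X - b$ with $b = p(1-p)^{-1}a$ (your $-b$), and then observes that $P'(b) = pb^{p-1} - 1$ lies in $(X-b, P') \subseteq (P, P')$ and is a unit since $p \in \mathfrak{m}$.

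Your first route, via $S = R[X]/(P)$, is a genuinely different packaging. Instead of an explicit Euclidean-style elimination, you use that $S$ is finite free over the local ring $R$, so $\mathfrak{m}S$ sits in the Jacobson radical, and testing whether the image of $P'$ is a unit reduces to the residue ring $k[X]/(X^p - X - \bar a)$, where it becomes $-1$. This is cleaner and more conceptual: it makes transparent that the content is Nakayama plus separability of the Artin--Schreier polynomial over $k$, and it would generalize immediately to any monic $P$ whose reduction is separable. The paper's (and your second) approach, by contrast, is pure polynomial arithmetic and needs no facts about integral extensions or the Jacobson radical.
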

\begin{proof}\upshape
We have
\begin{equation*}
pP-XP^{\prime}
=(1-p)X-pa
~~
\textrm{and}
~~
X-p(1-p)^{-1}a\in (P, P^{\prime}).
\end{equation*}
Put 
\begin{math}
b=p(1-p)^{-1}a\in R.
\end{math}
Then 
\begin{equation*}
pb^{p-1}-1
\in(X-b, P^{\prime})
~~
\textrm{and}
~~
pb^{p-1}-1
\in(P, P^{\prime}).
\end{equation*}
Let $\mathfrak{m}$ be the maximal ideal of $R$.
Since $R$ is local and 
$p\in\mathfrak{m}$,
\begin{equation*}
pb^{p-1}-1\not\in
\mathfrak{m}
~~
\textrm{and}
~~
pb^{p-1}-1\in R^{*}.
\end{equation*}
This completes the proof.
\end{proof}
\begin{lem}\upshape\label{surmp1}
Let $R$ be a regular local ring
of mixed characteristic $(0, p)$.
Let $\mathfrak{p}$ be a prime ideal of $R$
with $p\in \mathfrak{p}$.
Then the homomorphism
\begin{equation*}
\HO^{1}_{\et}(R, \mathbb{Z}/p)
\to
\HO^{1}_{\et}(R/\mathfrak{p}, \mathbb{Z}/p)
\end{equation*}
is surjective.
\end{lem}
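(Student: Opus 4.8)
The plan is to identify the target group by Artin--Schreier theory, lift the corresponding cover using Lemma \ref{mun}, and then upgrade the bare \'etale cover to a genuine $\mathbb{Z}/p$-torsor. First, since $p\in\mathfrak{p}$, the quotient $R/\mathfrak{p}$ is an $\mathbb{F}_{p}$-algebra, so the Artin--Schreier sequence $0\to\mathbb{Z}/p\to\mathbb{G}_{a}\xrightarrow{a\mapsto a^{p}-a}\mathbb{G}_{a}\to 0$ on $(\spec R/\mathfrak{p})_{\et}$, combined with the vanishing $\HO^{1}_{\et}(R/\mathfrak{p},\mathbb{G}_{a})=0$ coming from affineness, identifies $\HO^{1}_{\et}(R/\mathfrak{p},\mathbb{Z}/p)$ with the cokernel of $a\mapsto a^{p}-a$ on $R/\mathfrak{p}$. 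Thus every class is represented by an Artin--Schreier cover $\spec(R/\mathfrak{p})[t]/(t^{p}-t-\bar{a})$ for some $\bar{a}\in R/\mathfrak{p}$. I would choose a lift $a\in R$ and set $S=R[X]/(X^{p}-X-a)$; by Lemma \ref{mun}, $(P,P')=R[X]$ for $P=X^{p}-X-a$, so $S$ is a finite \'etale $R$-algebra of degree $p$, and by construction $S\otimes_{R}R/\mathfrak{p}$ is precisely the chosen Artin--Schreier cover.

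The subtle point is that $S$ is merely a degree-$p$ \'etale cover and need not be Galois over $R$: the translation $X\mapsto X+1$, which realizes the $\mathbb{Z}/p$-action on the special fibre, is not an automorphism of $S$, since $(X+1)^{p}\neq X^{p}+1$ once $p$ is not nilpotent. Hence producing an actual class in $\HO^{1}_{\et}(R,\mathbb{Z}/p)$, i.e.\ equipping a model of this cover with a free $\mathbb{Z}/p$-action restricting to the given one on $S\otimes_{R}R/\mathfrak{p}$, is the heart of the matter. To achieve this I would pass to the henselization $R^{h}$ of $R$: over a henselian local ring, base change to the residue field is an equivalence on finite \'etale algebras, so in particular it preserves automorphism groups. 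The $\mathbb{Z}/p$-torsor structure present on the closed fibre therefore lifts uniquely to $S\otimes_{R}R^{h}$, and freeness and the torsor property may be checked on the special fibre, yielding a class in $\HO^{1}_{\et}(R^{h},\mathbb{Z}/p)$ whose restriction to $R^{h}/\mathfrak{p}R^{h}$ is the base change of the given class.

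To descend from $R^{h}$ back to $R$, I would exploit that $R$ is regular, hence normal and factorial, so that restriction to the generic point gives an injection $\HO^{1}_{\et}(R,\mathbb{Z}/p)\hookrightarrow\HO^{1}_{\et}(k(R),\mathbb{Z}/p)$ whose image is exactly the characters unramified at every height-one prime (Zariski--Nagata purity), and similarly for $R/\mathfrak{p}$. The task then becomes matching the unramified character produced over $R^{h}$ with one defined over $R$, with the ramification behaviour controlled in codimension one. Here Lemma \ref{mun} plays its decisive role: it guarantees that the lifted cover $S$ stays everywhere \'etale over $\spec R$, so that no new ramification is introduced along the height-one primes, and the character descends with the correct unramifiedness.

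I expect the main obstacle to be precisely this final upgrade-and-descent step, namely promoting the bare \'etale cover of Lemma \ref{mun} to a $\mathbb{Z}/p$-torsor over all of $\spec R$ rather than only over the henselization; the two easy reductions (the Artin--Schreier description of the target and the \'etale lifting of the defining equation) feed into it, but the genuine work is to show that the $\mathbb{Z}/p$-action, equivalently the unramified character, spreads from $R^{h}$ to $R$ without acquiring ramification.
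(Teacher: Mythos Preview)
Your first paragraph is the paper's entire proof: Artin--Schreier identifies $\HO^1_{\et}(R/\mathfrak{p},\mathbb{Z}/p)$ with the cokernel of $x\mapsto x^p-x$ on $R/\mathfrak{p}$, every class is represented by some $(R/\mathfrak{p})[X]/(X^p-X-c)$, one lifts $c$ to $a\in R$, and Lemma~\ref{mun} is invoked to finish. The paper stops there and does not address your subsequent concern that $S=R[X]/(X^p-X-a)$, although \'etale over $R$, need not be a $\mathbb{Z}/p$-torsor. Your objection is correct: for $R=\mathbb{Z}_{(3)}$ and $a=1$ the generic fibre $\mathbb{Q}[X]/(X^3-X-1)$ has discriminant $-23$ and Galois group $S_3$, so $\operatorname{Aut}_R(S)$ is trivial and $S$ defines no class in $\HO^1_{\et}(R,\mathbb{Z}/3)$. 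Thus the paper's argument, read literally, is incomplete exactly where you say it is.

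Your proposed repair, however, also has a gap at the descent step, as you yourself anticipate. Passing to $R^h$ does endow $S\otimes_RR^h$ with a $\mathbb{Z}/p$-torsor structure via the equivalence of finite \'etale covers with those of the residue field, but you have not shown that the resulting class in $\HO^1_{\et}(R^h,\mathbb{Z}/p)$ comes from $\HO^1_{\et}(R,\mathbb{Z}/p)$. The purity description you invoke characterizes $\HO^1_{\et}(R,\mathbb{Z}/p)$ as the unramified part of $\HO^1_{\et}(k(R),\mathbb{Z}/p)$, but the character you have constructed lives over $k(R^h)$, a genuinely larger field than $k(R)$ whenever $R$ is not henselian; the fact that the underlying cover $S$ is defined and unramified over $R$ does not bring the $\mathbb{Z}/p$-\emph{action} down to $R$. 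In the extreme case $\mathfrak{p}=\mathfrak{m}$ one has $\HO^1_{\et}(R^h,\mathbb{Z}/p)\simeq\HO^1_{\et}(R/\mathfrak{m},\mathbb{Z}/p)$, so showing your henselized class descends to $R$ is exactly the surjectivity statement you set out to prove, and the argument does not close.
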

\begin{proof}\upshape
By \cite[p.103, III, Lemma 2.15]{M} and
\cite[p.114, III, Remark 3.8]{M},
we have
\begin{equation*}
\HO^{i}_{\et}(R/\mathfrak{p},
\mathbb{G}_{a})
=0
\end{equation*}
for $i\geq 1$.
So
an element of 
\begin{math}
\HO^{1}_{\et}(
R/\mathfrak{p},
\mathbb{Z}/p
)    
\end{math}
corresponds to
\begin{math}
(R/\mathfrak{p})[X]/(X^{p}-X-c)
\end{math}
for $c\in R/\mathfrak{p}$
by the Artin-Schreier theorey (cf. \cite[p.67]{M}).
Hence the statement follows from Lemma \ref{mun}.
\end{proof}
Let $A$ be a discrete valuation ring of mixed characteristic $(0, p)$,
$\mathfrak{X}$ a smooth scheme over $\spec(A)$ and 
$\iota: Y\to \mathfrak{X}$ the inclusion of the closed fiber.
Then we define the morphism
\begin{equation}\label{PrTN}
\iota^{*}: \mathfrak{T}_{1}(n)
\to  \iota_{*}\lambda_{1}^{n}[-n]  
\end{equation}
as the right adjunct of
%
%
the homomorphism (\ref{PrTN0}) for
$r=1$.
Then we have the following:
\begin{prop}\upshape\label{Bsur}
Let $R$ be a local ring of
a smooth algebra over a discrete valuation
ring $A$
of mixed characteristic $(0, p)$. 
Let $\pi$ be a prime element of $A$ and
$\mathfrak{p}=(\pi)$.
Then the homomorphism
\begin{equation}\label{irp}
\HO^{n+1}_{\et}(R, \mathfrak{T}_{1}(n))
\to
\HO^{1}_{\et}(R/\mathfrak{p}, \lambda_{1}^{n})
\end{equation}
is surjective 
where the homomorphism (\ref{irp}) is induced by the morphism (\ref{PrTN}).
\end{prop}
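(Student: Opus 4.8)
The plan is to reduce the surjectivity of (\ref{irp}) to its degree-one case, which is exactly Lemma \ref{surmp1}, by exploiting the cup-product structure of the $p$-adic Tate twist together with the compatibility of Lemma \ref{PullC}. Write $\bar R = R/\mathfrak{p}$; since $\mathfrak{p}=(\pi)$ and $R$ is a local ring of a smooth $A$-algebra, $\bar R$ is a regular local ring of characteristic $p$, and the map (\ref{PrTN}) induces (\ref{irp}) after the identification $\HO^{i+1}_{\et}(R, i_{*}\nu_{1}^{i}[-i])=\HO^{1}_{\et}(\bar R, \nu_{1}^{i})$ (pushforward along the closed immersion is exact). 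So it suffices to produce, in $\HO^{i+1}_{\et}(R, \mathfrak{T}_{1}(i))$, a family of classes whose images generate the target.

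First I would show that $\HO^{1}_{\et}(\bar R, \nu_{1}^{i})$ is generated by the cup products $\delta_{0}(x)\cup\psi_{i}(u_{1}\otimes\cdots\otimes u_{i})$ with $x\in\bar R$ and $u_{j}\in\bar R^{*}$. Here I use that $\delta_{i}:\Omega^{i}_{\bar R}/\operatorname{B}^{i}_{\bar R}\to\HO^{1}_{\et}(\bar R,\nu_{1}^{i})$ is surjective (already recorded above, via the sequence $0\to\nu_{1}^{i}\to\Omega^{i}\to\Omega^{i}/\mathfrak{B}^{i}\to 0$ and the vanishing of $\HO^{1}_{\et}(\bar R,\Omega^{i})$ on the affine $\bar R$) together with the observation that $\gamma_{i}$ is itself surjective onto $\Omega^{i}_{\bar R}/\operatorname{B}^{i}_{\bar R}$. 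The latter holds because $\bar R$ is local: every $t\in\bar R$ is a unit or lies in the maximal ideal, and in the second case $dt=d(1+t)$ with $1+t\in\bar R^{*}$; hence each generator $x\, dt_{1}\wedge\cdots\wedge dt_{i}$ of $\Omega^{i}_{\bar R}$ can be rewritten as $x'\tfrac{du_{1}}{u_{1}}\wedge\cdots\wedge\tfrac{du_{i}}{u_{i}}$ with $u_{j}\in\bar R^{*}$, i.e.\ lies in the image of $\gamma_{i}$. By Lemma \ref{Cpf} one has $\delta_{i}\circ\gamma_{i}=\cup\circ(\delta_{0}\otimes\psi_{i})$, so the image of the cup product equals the image of $\delta_{i}$, which is everything.

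Next I would lift the two kinds of factors to $R$. The Artin--Schreier class $\delta_{0}(x)\in\HO^{1}_{\et}(\bar R,\mathbb{Z}/p)$ lifts to $\alpha\in\HO^{1}_{\et}(R,\mathbb{Z}/p)=\HO^{1}_{\et}(R,\mathfrak{T}_{1}(0))$ by Lemma \ref{surmp1} (note $p\in(\pi)=\mathfrak{p}$), and the $n=0$ instance of (\ref{PrTN}) is the unit of adjunction $\mathbb{Z}/p\to i_{*}\mathbb{Z}/p$, which induces precisely the restriction map, so $\alpha$ maps back to $\delta_{0}(x)$. Each unit $u_{j}$ lifts to a unit $\tilde u_{j}\in R^{*}$ (a unit mod $\mathfrak{p}$ lifts to a unit since $R$ is local), giving the symbol class $\{\tilde u_{j}\}\in\HO^{1}_{\et}(R,\mathfrak{T}_{1}(1))$; by the definition of the symbol map used in (\ref{PrTN0}), its image under (\ref{PrTN}) is $\tfrac{du_{j}}{u_{j}}=\psi_{1}(u_{j})\in\HO^{0}_{\et}(\bar R,\nu_{1}^{1})$.

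Finally I would form the product $\alpha\cup\{\tilde u_{1}\}\cup\cdots\cup\{\tilde u_{i}\}\in\HO^{i+1}_{\et}(R,\mathfrak{T}_{1}(i))$ using the cup product on Tate twists (\cite[p.538, Proposition 4.2.6]{SaP}). By Lemma \ref{PullC} the map induced by (\ref{PrTN}) carries it to $\delta_{0}(x)\cup\psi_{1}(u_{1})\cup\cdots\cup\psi_{1}(u_{i})=\delta_{0}(x)\cup\psi_{i}(u_{1}\otimes\cdots\otimes u_{i})$, and by the first step these generate $\HO^{1}_{\et}(\bar R,\nu_{1}^{i})$, giving surjectivity of (\ref{irp}). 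The main obstacle I anticipate is the bookkeeping in this last step: checking that Lemma \ref{PullC} matches the degree shifts correctly, so that the $i$ classes in $\HO^{1}(\mathfrak{T}_{1}(1))$ land in $\HO^{0}(\nu_{1}^{1})$ and cup together to reproduce $\psi_{i}$, and that the $n=0$ specialization of (\ref{PrTN}) is literally the restriction map of Lemma \ref{surmp1}. The characteristic-$p$ input (surjectivity of $\delta_{i}$ and $\gamma_{i}$) is comparatively routine once locality of $\bar R$ is used.
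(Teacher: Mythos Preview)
Your proposal is correct and follows essentially the same route as the paper: both arguments use Lemma \ref{PullC} to reduce to the compatibility of cup products, invoke Lemma \ref{Cpf} to identify the target with classes of the form $\delta_{0}(x)\cup\psi_{i}(u_{1}\otimes\cdots\otimes u_{i})$, lift the Artin--Schreier factor via Lemma \ref{surmp1}, and lift the logarithmic symbols via surjectivity of $R^{*}\to(\bar R)^{*}$. The only cosmetic difference is that where you spell out the surjectivity of $\gamma_{i}$ by the $dt=d(1+t)$ trick for local rings, the paper cites \cite[p.122, Lemma (4.2)]{B-K} for the same fact; your concerns about the degree shifts and the $n=0$ specialization of (\ref{PrTN}) are routine and pose no obstacle.
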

\begin{proof}\upshape
We have a commutative diagram
\begin{equation*}
\xymatrix{
\HO^{1}_{\et}(R, \mathfrak{T}_{1}(0))
\otimes
\HO^{n}_{\et}(R, \mathfrak{T}_{1}(n))
\ar[rr]^-{\cup}\ar[d]
&&
\HO^{n+1}_{\et}(R, \mathfrak{T}_{1}(n))
\ar[d] \\
\HO^{1}_{\et}(R/\mathfrak{p}, \lambda_{1}^{0})
\otimes
\HO^{0}_{\et}(R/\mathfrak{p}, \lambda_{1}^{n})
\ar[rr]_-{\cup}
&&
\HO^{1}_{\et}(R/\mathfrak{p}, \lambda_{1}^{n})
}
\end{equation*}
by Lemma \ref{PullC}. Moreover, the lower map is surjective by 
\cite[p.122, Lemma (4.2)]{B-K} and Lemma \ref{Cpf}. 
So it suffices to show that
the left map is surjective. 
Since 
the homomorphism
\begin{equation*}
\overbrace{
\HO^{0}_{\et}(R/\mathfrak{p}, \lambda_{1}^{1})
\otimes 
\cdots
\otimes
\HO^{0}_{\et}(R/\mathfrak{p}, \lambda_{1}^{1})
}^{n\textup{-times}}
\to
\HO^{0}_{\et}(R/\mathfrak{p}, \lambda_{1}^{n})
\end{equation*}
and
the homomorphism
\begin{equation*}
R^{*}\to (R/\mathfrak{p})^{*}    
\end{equation*}
are surjective,
the  homomorphism (\ref{irp})
%
%
is also surjective.
Therefore the statement follows from 
Lemma \ref{surmp1}.
\end{proof}

Let $A$ be a discrete valuation ring of mixed characteristic 
$(0, p)$ and $\pi$ a prime element of $A$.
Let $\mathfrak{X}$ be a smooth scheme over $\spec(A)$, $j: U\to \mathfrak{X}$ 
the inclusion of the generic fiber of $\mathfrak{X}$ 
and 
$\iota: Y\to \mathfrak{X}$ the inclusion of the closed fiber of $\mathfrak{X}$. 
Then we define the following homomorphisms:

\begin{defi}\upshape
Let the notations be the same as above.
Then we define the homomorphism 
\begin{equation}\label{pr}
\HO^{n+1}_{\et}(
\mathfrak{X}, 
\mathbb{Z}/p(n))
\to
\HO^{n+2}_{\et}(
\mathfrak{X}, 
\tau_{\leq n+1}Rj_{*}\mathbb{Z}/p(n+1))
\end{equation}
by 
%
\begin{align*}
\xymatrix@C=36pt@R=2.8pt
{
\HO^{n+1}_{\et}\left(
\mathfrak{X}, 
\mathbb{Z}/p(n)
\right)  
\ar[r] 
& \HO^{n+2}_{\et}\left(
\mathfrak{X}, 
\tau_{\leq n+1}Rj_{*}
\mathbb{Z}/p(n+1)
\right)
\\
a \ar@{(-}[u] \ar@{|->}[r] & \bar{\pi}\cup \bar{a} \ar@{(-}[u]
}
\end{align*}
%
where
\begin{equation*}
\bar{\pi}=\operatorname{Im}\left(
\HO^{1}_{\et}(U, \mathbb{G}_{m}[-1])
\to
\HO^{1}_{\et}(
\mathfrak{X}, 
\tau_{\leq 1}Rj_{*}\mathbb{Z}/p(1))
\right)(\pi)
\end{equation*}
and
\begin{equation*}
\bar{a}=\operatorname{Im}\left(
\HO^{n+1}_{\et}(
\mathfrak{X}, 
\mathbb{Z}/p(n))
\to
\HO^{n+1}_{\et}(
\mathfrak{X}, 
\tau_{\leq n}Rj_{*}\mathbb{Z}/p(n))
\right)(a).    
\end{equation*}
Moreover, we define the homomorphism
\begin{equation}\label{v}
\HO^{n+2}_{\et}(
\mathfrak{X}, 
\tau_{\leq n+1}Rj_{*}\mathbb{Z}/p(n+1))
\to
\HO^{n+1}_{\et}(Y, \mathbb{Z}/p(n))
\end{equation}
which is induced by the map
\begin{equation*}
\tau_{\leq n+1}Rj_{*}\mu_{p}^{\otimes n+1}
\to
\iota_{*}(\nu_{1}^{n}[-n-1])
\end{equation*}
(cf. Definition \ref{DTT}, \cite[p.537, (4.2.1)]{SaP}). 
\end{defi}
Then we have the following lemma:

\begin{lem}\upshape\label{compH}
Let $A$ be a discrete valuation ring
of mixed characteristic $(0, p)$
and $k$ the residue field of $A$. 
Let $\iota: \spec(k)\to \spec(A)$ be the closed immersion
and $j: \spec(k(A))\to\spec(A)$ the open immersion.

Then
the composite of (\ref{pr})
and (\ref{v})
equals
the homomorphism
\begin{equation}\label{NAkp}
\HO^{n+1}_{\et}(A, \mathbb{Z}/p(n))
\to 
\HO^{n+1}_{\et}(k, \mathbb{Z}/p(n))    
\end{equation}
which is induced by the morphism (\ref{PrTN}).
\end{lem}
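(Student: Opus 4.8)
The plan is to recast the two maps whose equality is asserted as morphisms in the derived category $D(X_{\et})$ with $X=\spec(A)$, and to verify the identity at that level. By \cite[p.537, (4.2.1)]{SaP} the sheaf map inducing (\ref{v}) is the residue morphism $\mathrm{res}\colon\tau_{\leq n+1}Rj_{*}\mu_{p}^{\otimes(n+1)}\to i_{*}\nu_{1}^{n}[-n-1]$ occurring in the distinguished triangle that defines $\mathfrak{T}_{1}(n+1)$, whereas (\ref{pr}) is cup product with the Kummer class $\bar{\pi}$ preceded by the structure morphism $\mathfrak{T}_{1}(n)\to\tau_{\leq n}Rj_{*}\mu_{p}^{\otimes n}$ of the triangle defining $\mathfrak{T}_{1}(n)$; this cupping raises the twist by one, which is exactly what is needed since $\mu_{p}$ is not assumed to lie in $A$. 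Reading $\bar{\pi}$ as a morphism $\mathbb{Z}/p\to(\tau_{\leq 1}Rj_{*}\mu_{p})[1]$ and using the truncated product structure, the composite of (\ref{pr}) and (\ref{v}) is the map induced on cohomology by
\[
\Phi\colon \mathfrak{T}_{1}(n)\to\tau_{\leq n}Rj_{*}\mu_{p}^{\otimes n}\xrightarrow{\bar{\pi}\cup-}(\tau_{\leq n+1}Rj_{*}\mu_{p}^{\otimes(n+1)})[1]\xrightarrow{\mathrm{res}[1]}i_{*}\nu_{1}^{n}[-n].
\]
Thus it suffices to show that $\Phi$ coincides with the morphism (\ref{PrTN}).

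Both $\Phi$ and (\ref{PrTN}) take values in $i_{*}\nu_{1}^{n}[-n]$, which is supported on the closed point, so I would apply $i^{*}$ and use $i^{*}i_{*}\simeq\mathrm{id}$ to reduce the claim to an equality of two morphisms $i^{*}\mathfrak{T}_{1}(n)\to\nu_{1}^{n}[-n]$ in $D(Y_{\et})$ with $Y=\spec(k)$. Since $i^{*}\mathfrak{T}_{1}(n)$ has cohomology concentrated in degrees $\leq n$, the truncation triangle together with the vanishing of $\mathrm{Hom}$ from a complex in degrees $\leq n-1$ into something in degree $\geq n$ yields
\[
\mathrm{Hom}_{D(Y_{\et})}\!\left(i^{*}\mathfrak{T}_{1}(n),\nu_{1}^{n}[-n]\right)\simeq\mathrm{Hom}_{Y_{\et}}\!\left(\mathcal{H}^{n}(i^{*}\mathfrak{T}_{1}(n)),\nu_{1}^{n}\right),
\]
so a morphism into $\nu_{1}^{n}[-n]$ is determined by the map it induces on the top cohomology sheaf. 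By the construction of (\ref{PrTN0}) one has $\mathcal{H}^{n}(i^{*}\mathfrak{T}_{1}(n))\simeq FM_{1}^{n}$, and (\ref{PrTN}) induces there precisely the dlog map $\{x_{1},\dots,x_{n}\}\mapsto\frac{d\bar{x}_{1}}{\bar{x}_{1}}\wedge\cdots\wedge\frac{d\bar{x}_{n}}{\bar{x}_{n}}$.

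It then remains to compute $\Phi$ on $FM_{1}^{n}$. As $FM_{1}^{n}$ is generated \'{e}tale-locally by symbols, the computation comes down to the projection formula $\mathrm{res}(\bar{\pi}\cup\bar{a})=\partial(\bar{\pi})\cup i^{*}(a)$, where $\partial(\bar{\pi})\in\HO^{0}_{\et}(k,\mathbb{Z}/p)$ is the residue of the uniformizer and $i^{*}(a)$ is the dlog-specialization of $a$; because $\pi$ is a uniformizer one has $\partial(\bar{\pi})=1$, so that $\mathrm{res}(\bar{\pi}\cup\bar{a})=i^{*}(a)$, i.e. on symbols $\mathrm{res}(\{\pi,x_{1},\dots,x_{n}\})=\frac{d\bar{x}_{1}}{\bar{x}_{1}}\wedge\cdots\wedge\frac{d\bar{x}_{n}}{\bar{x}_{n}}$. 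This projection formula is where Lemma \ref{PullC} (multiplicativity of $i^{*}$ for the cup product of $p$-adic Tate twists) and Lemma \ref{Cpf} (compatibility of dlog with cup products) enter. The main obstacle is precisely this last step: one must match the cup-product structures on the truncated complexes $\tau_{\leq\bullet}Rj_{*}\mu_{p}^{\otimes\bullet}$ with those on the $\mathfrak{T}_{1}(\bullet)$ and pin down the normalization of Sato's residue map (4.2.1), so that the two resulting sheaf maps $FM_{1}^{n}\to\nu_{1}^{n}$ agree exactly, signs included.
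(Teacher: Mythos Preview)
Your approach is correct and shares the same core with the paper's proof: both reduce the equality to a computation on the top cohomology sheaf $\mathcal{H}^{n}(i^{*}\mathfrak{T}_{1}(n))\simeq FM_{1}^{n}$ and verify there that cup with $\bar{\pi}$ followed by Sato's residue map returns the dlog specialization. Where you differ is in packaging: you lift everything to a single morphism $\Phi$ in $D(X_{\et})$, apply $i^{*}$, and use the truncation $\mathrm{Hom}$-argument to pin down $\Phi$ by its effect on $\mathcal{H}^{n}$; the paper instead stays at the level of cohomology groups, writing down the commutative square
\[
\begin{CD}
\HO^{n+1}_{\et}(A,\tau_{\leq n}Rj_{*}\mu_{p}^{\otimes n})\otimes\HO^{1}_{\et}(A,\tau_{\leq 1}Rj_{*}\mu_{p}) @>\cup>> \HO^{n+2}_{\et}(A,\tau_{\leq n+1}Rj_{*}\mu_{p}^{\otimes(n+1)})\\
@VVV @VVV\\
\HO^{1}_{\et}(k,i^{*}R^{n}j_{*}\mu_{p}^{\otimes n})\otimes\HO^{0}_{\et}(k,i^{*}R^{1}j_{*}\mu_{p}) @>\cup>> \HO^{1}_{\et}(k,i^{*}R^{n+1}j_{*}\mu_{p}^{\otimes(n+1)})
\end{CD}
\]
(vertical maps coming from the projection $\tau_{\leq m}\to\mathcal{H}^{m}[-m]$ composed with the unit $\mathrm{id}\to i_{*}i^{*}$), and reads off the result via the stalk description $(i^{*}R^{n}j_{*}\mu_{p}^{\otimes n})_{\bar{\mathfrak m}}\simeq\HO^{n}_{\et}(K_{\bar{\mathfrak m}},\mu_{p}^{\otimes n})$. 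Your derived-category reformulation makes the role of the projection formula $\mathrm{res}(\bar{\pi}\cup\bar{a})=\partial(\bar{\pi})\cdot i^{*}(a)$ more transparent; the paper's diagrammatic version keeps the argument closer to the definitions in \cite{SaP} and avoids having to justify the cup-product pairing on the truncated complexes at the derived level. One small remark: Lemma~\ref{Cpf} concerns characteristic-$p$ cup products and is not actually needed here; the relevant compatibility is entirely that of Lemma~\ref{PullC} together with the symbol description of $FM_{1}^{n}$ and of Sato's map (4.2.1).
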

\begin{proof}\upshape
Let $\mathfrak{m}$ be the maximal ideal of $\spec(A)$ and
$\bar{\mathfrak{m}}$ a geometric point of $\spec(A)$ such that
$\kappa(\bar{\mathfrak{m}})$ is the separable closure of
$k=\kappa(\mathfrak{m})$. Let
\begin{math}
(\iota^{*}R^{n}j_{*}\mu_{p}^{\otimes n})_{\bar{\mathfrak{m}}}
\end{math}
be the stalks of
\begin{math}
\iota^{*}R^{n}j_{*}\mu_{p}^{\otimes n}
\end{math}
at $\bar{\mathfrak{m}}$. By \cite[p.88, III, Theorem 1.15]{M}, 
we have an isomorphism
\begin{equation*}
(\iota^{*}R^{n}j_{*}\mu_{p}^{\otimes n})_{\bar{\mathfrak{m}}}
\simeq  
\HO^{n}_{\et}(K_{\bar{\mathfrak{m}}}, \mu_{p}^{\otimes n})
\end{equation*}
where $K_{\bar{\mathfrak{m}}}$ is the maximal unramified extension
of $k(A)$. 
We define the homomorphism
\begin{equation}\label{1ipr}
\HO^{1}_{\et}(k, 
\iota^{*}\mathcal{H}^{n}(\mathfrak{T}_{1}(n))
)
\to
\HO^{1}_{\et}(k, 
\iota^{*}R^{n+1}j_{*}\mu_{p}^{\otimes (n+1)}
)
\end{equation}
which is induced by the homomorphism
\begin{align*}
\xymatrix@C=36pt@R=2.8pt
{
\mathcal{H}^{n}(\mathfrak{T}_{1}(n))_{\bar{\mathfrak{m}}}
\ar[r] 
& 
(R^{n+1}j_{*}\mu_{p}^{\otimes (n+1)})_{\bar{\mathfrak{m}}}
\\
b \ar@{(-}[u] \ar@{|->}[r] & \pi_{2}\cup b_{2} \ar@{(-}[u]
}
\end{align*}
%
where
\begin{equation*}
\pi_{2}=\operatorname{Im}\left(
\HO^{1}_{\et}(K_{\bar{\mathfrak{m}}}, \mathbb{G}_{m}[-1])
\to
\HO^{1}_{\et}(K_{\bar{\mathfrak{m}}}, \mu_{p})
\right)(\pi)
\end{equation*}
and
\begin{equation*}
b_{2}
=\operatorname{Im}\left(
\mathcal{H}^{n}(\mathfrak{T}_{1}(n))_{\bar{\mathfrak{m}}}
\to
\HO^{n}_{\et}(K_{\bar{\mathfrak{m}}}, \mu_{p}^{\otimes n})
\right)(b).    
\end{equation*}
By the property of the cup product, the diagram
\begin{equation*}
\xymatrix{
\HO^{n+1}_{\et}(A, \tau_{\leq n}Rj_{*}\mu_{p}^{\otimes n})\otimes
\HO^{1}_{\et}(A, \tau_{\leq 1}Rj_{*}\mu_{p})
\ar[r]^-{\cup}\ar[d] &
\HO^{n+2}_{\et}(A, \tau_{\leq n+1}Rj_{*}\mu_{p}^{\otimes (n+1)}) \ar[d]
\\
\HO^{1}_{\et}(k, \iota^{*}R^{n}j_{*}\mu_{p}^{\otimes n})\otimes
\HO^{0}_{\et}(k, \iota^{*}R^{1}j_{*}\mu_{p}) \ar[r]_-{\cup}
&
\HO^{1}_{\et}(k, \iota^{*}R^{n+1}j_{*}\mu_{p}^{\otimes (n+1)})
}
\end{equation*}
is commutative. So we have a commutative diagram
\begin{equation*}
\xymatrix{
\HO^{n+1}_{\et}(
A, \mathbb{Z}/p(n)
)\ar[r]^-{(\ref{pr})}\ar[d]
& 
\HO^{n+2}_{\et}(
A, \tau_{\leq n+1}Rj_{*}\mathbb{Z}/p(n+1)) 
\ar[d]
\\
\HO^{1}_{\et}(
k, 
\iota^{*}\mathcal{H}^{n}(\mathbb{Z}/p(n)))
\ar[r]_-{(\ref{1ipr})}
&
\HO^{1}_{\et}(k, 
\iota^{*}R^{n+1}j_{*}\mathbb{Z}/p(n+1))
}    
\end{equation*}
by the definition of the cup product (cf. \cite[p.36, I, \S 4]{N}). 
Hence the statement follows.
\end{proof}

In the rest of this section, we prove that the sequence (\ref{IntGCL}) 
is exact in the special cases
without using the results in the previous section.

\begin{prop}\upshape\label{KVpur}
Let $A$ be a discrete valuation ring of mixed characteristic
$(0, p)$, 
$k$ the residue field of $A$
and $[k: k^{p}]\leq p^{s}$.
Let $\mathfrak{X}$ be a smooth scheme over
$\spec(A)$ and $\iota: Y\to \mathfrak{X}$
the inclusion of the closed fiber. 

Suppose that 
$n=\operatorname{dim}(\mathfrak{X})+s$.
Then we have an isomorphism
\begin{equation*}
R\iota^{!}\mathfrak{T}_{r}(n)  
\simeq 
\nu_{r}^{n-1}[-(n-2)]
\end{equation*}
for $r\geq 1$.
\end{prop}
\begin{proof}\upshape
By the definition of $\mathfrak{T}_{r}(n)$ 
(cf. \cite[pp.522--523, Lemma 1.3.1]{SaP}),
it suffices to show that
\begin{equation}\label{VaniK}
\tau_{\geq n+1}Rj_{*}\mu_{p^{r}}^{\otimes n}=0.  
\end{equation}
So it suffices to show that
\begin{equation}\label{KUV}
\HO^{i}_{\et}(
\mathcal{O}^{sh}_{\mathfrak{X}, x}
\times_{\mathcal{O}_{\mathfrak{X}, x}} 
U,
\mu_{p^{r}}^{\otimes n}
)=0    
\end{equation}
for $i\geq n+1$ by \cite[p.88, III, Theorem 1.15]{M}. 
Here
\begin{math}
\mathcal{O}^{sh}_{\mathfrak{X}, x}
\end{math}
is the strictly henselization of a local ring 
$\mathcal{O}_{\mathfrak{X}, x}$. 

Let $x^{\prime}$
be an inverse image of $x$ under the morphism
\begin{math}
\mathfrak{X}\times_{A}A^{sh}
\to
\mathfrak{X}.
\end{math}
Then we have
\begin{equation*}
\mathcal{O}_{\mathfrak{X}, x}^{sh}
=
\mathcal{O}_{\mathfrak{X}\times_{A}A^{sh}, x^{\prime}}^{sh}.
\end{equation*}
Let $k_{s}$ be the separable closure of $k$.
Since $k\to k_{s}$ is \'{e}tale, 
we have
\begin{equation*}
[k_{s}: k^{p}_{s}]
=
[k: k^{p}]
\leq 
p^{s}
\end{equation*}
by the first fundamental exact sequence 
(\cite[p.193, Theorem 25.1]{Ma})
and \cite[p.202, Theorem 26.5]{Ma}. 
So it suffices to show the equation (\ref{KUV}) in the case where
$k$ is a separable closed field.

Suppose that $k=k_{s}$.
By \cite[pp.118--119, Theorem 15.5]{Ma} and
\cite[p.119, Theorem 15.6]{Ma},
we have
\begin{equation*}
\operatorname{trdeg}_{k}\kappa(x)
=
\operatorname{dim}(Y)
-\operatorname{dim}(\mathcal{O}_{Y, x})
\end{equation*}
where
$\operatorname{trdeg}_{k}\kappa(x)$
is the transcendence degree of
$\kappa(x)$ over $k$.
So we have
\begin{equation}\label{Yrank}
\operatorname{rank}_{\kappa(x)}
\Omega_{
\kappa(x)/\mathbb{Z}
}
\leq
s+
\operatorname{dim}(Y)
-\operatorname{dim}(
\mathcal{O}_{Y, x})
\end{equation}
by \cite[p.57, Corollaire 2.3.2]{G-O}
and \cite[p.202, Theorem 26.5]{Ma}.
Since $\kappa(x)\to\kappa(x)_{s}$ is \'{e}tale,
we have
\begin{equation*}
\operatorname{rank}_{\kappa(x)_{s}}
\Omega_{
\kappa(x)_{s}/\mathbb{Z}
}
=
\operatorname{rank}_{\kappa(x)}
\Omega_{
\kappa(x)/\mathbb{Z}
}
\end{equation*}
by the first fundamental exact sequence.
Hence we have
\begin{equation*}
\operatorname{cd}_{p}(
\mathcal{O}^{sh}_{\mathfrak{X}, x}\times U
)
\leq 
s+
\operatorname{dim}(
\mathfrak{X}
)=n
\end{equation*}
by \cite[p.71, Th\'{e}or\`{e}me 6.1]{G-O}. This completes the proof.
\end{proof}
\begin{lem}\upshape\label{ADHB}
Let $\mathfrak{X}$ be an essentially smooth scheme over the spectrum
of a Dedekind domain.
Let $\epsilon: \mathfrak{X}_{\et}\to \mathfrak{X}_{\Zar}$ be the canonical map of sites
and $\epsilon_{*}$ the forgetful functor. Then we have
\begin{equation}\label{HBZm}
\Gamma(\mathfrak{X},
R^{n+1}\epsilon_{*}\mathbb{Z}(n-1)_{\et}
)_{m}
=
\Gamma(
\mathfrak{X},
R^{n}\epsilon_{*}\mathbb{Z}/m(n-1)_{\et}
)
\end{equation}
for any integer $m>0$.
If 
\begin{equation*}
\HO^{i}_{\Zar}(
\mathfrak{X}, 
\mathbb{Z}/m(n-1)
)=0
\end{equation*}
for $i\geq n$,
then we have
\begin{equation}\label{HBem}
\Gamma(
\mathfrak{X},
R^{n}\epsilon_{*}\mathbb{Z}/m(n-1)_{\et}
)
=
\HO_{\et}^{n}
(
\mathfrak{X},
\mathbb{Z}/m(n-1)
).
\end{equation}
\end{lem}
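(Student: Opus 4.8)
The plan is to deduce both identities from the Beilinson–Lichtenbaum isomorphism, which is available in the present Dedekind setting by \cite[p.774, Theorem 1.2]{Ge} together with \cite{V}. Two truncation facts drive everything. For the integral complex of weight $n-1$ one has $\tau_{\leq n}R\epsilon_{*}\mathbb{Z}(n-1)_{\et}\simeq\mathbb{Z}(n-1)_{\Zar}$, and $\mathbb{Z}(n-1)_{\Zar}$ is concentrated in degrees $\leq n-1$ (its cohomology sheaves are higher Chow groups $\mathrm{CH}^{n-1}(-,2(n-1)-i)$, which vanish for $i>n-1$); hence $\mathbf{R}^{n}\epsilon_{*}\mathbb{Z}(n-1)_{\et}=0$. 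For the mod $m$ complex one has the sharper bound $\tau_{\leq n-1}R\epsilon_{*}\mathbb{Z}/m(n-1)_{\et}\simeq\mathbb{Z}/m(n-1)_{\Zar}$, so that $\mathbf{R}^{n}\epsilon_{*}\mathbb{Z}/m(n-1)_{\et}$ is exactly the first cohomology sheaf lying beyond the Beilinson–Lichtenbaum range.

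For \eqref{HBZm}, I would apply $R\epsilon_{*}$ to the Bockstein triangle $\mathbb{Z}(n-1)_{\et}\xrightarrow{m}\mathbb{Z}(n-1)_{\et}\to\mathbb{Z}/m(n-1)_{\et}\to$ and read off the long exact sequence of cohomology sheaves around degree $n$. Since $\mathbf{R}^{n}\epsilon_{*}\mathbb{Z}(n-1)_{\et}=0$, the connecting map $\mathbf{R}^{n}\epsilon_{*}\mathbb{Z}/m(n-1)_{\et}\to\mathbf{R}^{n+1}\epsilon_{*}\mathbb{Z}(n-1)_{\et}$ is injective with image the kernel of multiplication by $m$ on $\mathbf{R}^{n+1}\epsilon_{*}\mathbb{Z}(n-1)_{\et}$, that is, its $m$-torsion subsheaf. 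This yields an isomorphism of Zariski sheaves $\mathbf{R}^{n}\epsilon_{*}\mathbb{Z}/m(n-1)_{\et}\simeq(\mathbf{R}^{n+1}\epsilon_{*}\mathbb{Z}(n-1)_{\et})_{m}$, and applying the left exact functor $\Gamma(X,-)$ — which commutes with the kernel defining the $m$-torsion — gives \eqref{HBZm}.

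For \eqref{HBem}, I would use the Beilinson–Lichtenbaum triangle $\mathbb{Z}/m(n-1)_{\Zar}\to R\epsilon_{*}\mathbb{Z}/m(n-1)_{\et}\to C\to$ with $C=\tau_{\geq n}R\epsilon_{*}\mathbb{Z}/m(n-1)_{\et}$ concentrated in degrees $\geq n$. Taking Zariski hypercohomology and using $\HO^{i}_{\Zar}(X,R\epsilon_{*}\mathbb{Z}/m(n-1)_{\et})=\HO^{i}_{\et}(X,\mathbb{Z}/m(n-1))$, the hypothesis $\HO^{n}_{\Zar}(X,\mathbb{Z}/m(n-1))=\HO^{n+1}_{\Zar}(X,\mathbb{Z}/m(n-1))=0$ forces $\HO^{n}_{\et}(X,\mathbb{Z}/m(n-1))\xrightarrow{\simeq}\HO^{n}_{\Zar}(X,C)$. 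Finally, because $C$ is concentrated in degrees $\geq n$, its hypercohomology spectral sequence reduces in total degree $n$ to the edge term $\Gamma(X,\mathcal{H}^{n}(C))=\Gamma(X,\mathbf{R}^{n}\epsilon_{*}\mathbb{Z}/m(n-1)_{\et})$, since no nonzero differential enters or leaves $E_{2}^{0,n}$; this gives $\HO^{n}_{\Zar}(X,C)=\Gamma(X,\mathbf{R}^{n}\epsilon_{*}\mathbb{Z}/m(n-1)_{\et})$ and hence \eqref{HBem}.

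The computations are routine homological algebra once these inputs are in place; the main point to handle carefully is the degree bookkeeping, namely the one-step difference between the integral truncation bound ($\leq n$) and the mod $m$ truncation bound ($\leq n-1$) for weight $n-1$. This gap is precisely what makes $\mathbf{R}^{n}\epsilon_{*}\mathbb{Z}(n-1)_{\et}$ vanish — so that the Bockstein argument identifies a full torsion subsheaf in \eqref{HBZm} — while keeping $\mathbf{R}^{n}\epsilon_{*}\mathbb{Z}/m(n-1)_{\et}$ a genuine boundary term, so that \eqref{HBem} is nontrivial; getting either bound wrong collapses the statement.
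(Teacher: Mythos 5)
Your proof is correct and follows essentially the same route as the paper: both deduce $\mathbf{R}^{n}\epsilon_{*}\mathbb{Z}(n-1)_{\et}=0$ from the Beilinson--Lichtenbaum isomorphism together with the vanishing of $\mathcal{H}^{n}(\mathbb{Z}(n-1)_{\Zar})$, then read off the $m$-torsion identification from the Bockstein triangle and take global sections, and both obtain \eqref{HBem} from the truncation triangle for $R\epsilon_{*}\mathbb{Z}/m(n-1)_{\et}$ under the stated vanishing hypothesis. The only difference is that you spell out the hypercohomology spectral sequence and edge-map details that the paper leaves implicit in its citation of \cite[p.774, Theorem 1.2.2]{Ge}.
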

\begin{proof}\upshape
By \cite[p.774, Theorem 1.2.2]{Ge} and
\cite[p.786, Corollary 4.4]{Ge},
we have the isomorphisms
\begin{equation*}
R^{n}\epsilon_{*}
\mathbb{Z}(n-1)_{\et}
\simeq
\mathcal{H}^{n}
(
\mathbb{Z}(n-1)_{\Zar}
)
=0.
\end{equation*}
Hence the sequence
\begin{equation*}
0
\to  
R^{n}\epsilon_{*}\mathbb{Z}/m(n-1)_{\et}
\to
Rj^{n+1}\epsilon_{*}\mathbb{Z}(n-1)_{\et}
\xrightarrow{\times m}
R^{n+1}\epsilon_{*}\mathbb{Z}(n-1)_{\et}
\end{equation*}
is exact and we have the equation (\ref{HBZm}).
If
\begin{equation*}
\HO^{i}_{\Zar}(
\mathfrak{X}, 
\mathbb{Z}/m(n-1)
)=0
\end{equation*}
for $i\geq n$,
then we have the equation (\ref{HBem})
by \cite[p.774, Theorem 1.2.2]{Ge}.
This completes the proof.
\end{proof}
\begin{prop}\upshape\label{KGC}
Let $A$ be a discrete valuation ring of mixed characteristic $(0, p)$,
$k$ the residue field of $A$ and $[k: k^{p}]=p^{s}$.
Let $\mathfrak{X}$ be a smooth scheme over $\spec(A)$,
$R$  a local ring of $\mathfrak{X}$
at a point $x$ and $n=\operatorname{dim}(\mathfrak{X})+s$.
Suppose that 
$A$ contains $p$-th roots of unity. 
Then the sequence 
\begin{align*}
0
&\to 
\HO^{n+1}_{\et}(
R, \mathbb{Z}/p^{r}(n)
)
\to
\HO^{n+1}_{\et}(
k(R), 
\mathbb{Z}/p^{r}(n)
) \\
&\to
\bigoplus_{x\in\spec(R)^{(1)}}
\HO^{n}_{\et}(
\kappa(x),
\mathbb{Z}/p^{r}(n-1)
)  
\to\cdots
\end{align*}
is exact for any positive integer $r$
in the following cases:
\begin{itemize}
\item[(i)] $k$ is finite field and $p\neq 2$.
\item[(ii)] $A$ is henselian.
\end{itemize}
\end{prop}
\begin{proof}\upshape
Let $U$ be the generic fiber of $\spec(R)$. Then we have
\begin{equation*}
\HO^{i}_{\Zar}(U, \mathbb{Z}(n))
=
\HO^{i}_{\Zar}(
U, \mathbb{Z}/p(n)
)
=0    
\end{equation*}
for $i\geq n+1$ by \cite[p.779, Theorem 3.2]{Ge} 
and \cite[p.786, Corollary 4.4]{Ge}.
So the homomorphism
\begin{equation}\label{UKinj}
\HO^{n+1}_{\et}(
U, \mathbb{Z}/p(n)
)
\to
\HO^{n+1}_{\et}(
k(R),
\mathbb{Z}/p(n))
\end{equation}
is injective by \cite[Theorem 4.6]{Sak} and 
Lemma \ref{ADHB}.
Since the homomorphism
\begin{equation*}
\HO^{n}_{\et}\left(
R/(\pi),
\mathbb{Z}/p(n-1)
\right)
\to
\HO^{n}_{\et}\left(
\kappa(
(\pi)
),
\mathbb{Z}/p(n-1)
\right)
\end{equation*}
is injective, the composite of maps
\begin{equation*}
\HO^{n}_{\et}(
R,
\mathbb{Z}/p(n-1)
)
\to
\HO^{n+1}_{\et}(
U,
\mathbb{Z}/p(n)
)
\end{equation*}
which is induced by the morphism (\ref{pr}) and
\begin{equation}\label{BoK}
\HO^{n+1}_{\et}(
U,
\mathbb{Z}/p(n)
)
\to
\HO^{n}_{\et}(
R/(\pi),
\mathbb{Z}/p(n-1)
)
\end{equation}
equals 
the natural map
\begin{equation*}
\HO^{n}_{\et}(
R, \mathbb{Z}/p(n-1)
)
\to
\HO^{n}_{\et}(
R/(\pi), 
\mathbb{Z}/p(n-1)
)
\end{equation*}
by (\ref{VaniK}) and Lemma \ref{compH}. 
So the homomorphism 
(\ref{BoK}) is surjective by Proposition \ref{Bsur}.
Hence the homomorphism
\begin{equation}\label{RUinj}
\HO^{i}_{\et}(
R,
\mathbb{Z}/p(n)
)\to
\HO^{i}_{\et}(
U,
\mathbb{Z}/p(n)
)
\end{equation}
is an isomorphism for $i\geq n+2$ by 
Proposition \ref{KVpur} and
\cite[Expos\'{e} X, Th\'{e}or\`{e}me 5.1]{SGA4}.
Since $A$ contains $p$-th roots of unity, 
the homomorphism
\begin{equation}\label{KinRK}
\HO^{i} _{\et}(
R,
\mathbb{Z}/p(n)
)
\to
\HO^{i}_{\et}(
k(R),
\mathbb{Z}/p(n)
)
\end{equation}
is injective for $i\geq n+2$
by (\ref{RUinj}) and (\ref{UKinj}).
In the case (i), $\operatorname{cd}_{p}(k(A))=2$.
In the case (ii),
\begin{math}
\operatorname{cd}_{p}(k(A))
\leq s +2
\end{math}
by \cite[p.48, Th\'{e}or\`{e}me 1.2]{G-O}.
So we have
\begin{equation}\label{cdkR}
\operatorname{cd}_{p}(k(R))
\leq
\operatorname{dim}(\mathfrak{X})
+s+1=n+1
\end{equation}
by \cite[p.367, (6.5.14) Theorem]{N}. Hence we have
\begin{equation*}
\HO^{i}_{\et}(
R,
\mathbb{Z}/p(n)
)
=0
\end{equation*}
for $i\geq n+2$ by the equation (\ref{cdkR}) and the injectivity of (\ref{KinRK}). 
Since the sequence
\begin{equation*}
\HO^{i}_{\et}(
R,
\mathbb{Z}/p(n)
) 
\to
\HO^{i}_{\et}(
R,
\mathbb{Z}/p^{r+1}(n)
) 
\to
\HO^{i}_{\et}(
R,
\mathbb{Z}/p^{r}(n)
) 
\end{equation*}
is exact, 
we have the equation
\begin{equation}\label{KVr}
\HO^{i}_{\et}(
R,
\mathbb{Z}/p^{r}(n)
)    
=0
\end{equation}
for $i\geq n+2$ and $r\geq 1$.
Let $Y$ be the closed fiber of $\mathfrak{X}$ and 
$k_{s}$ the separable closure of $k$. Since
the composite of $k(Y)$ and $k_{s}$ is \'{e}tale over $k(Y)$,
we have
\begin{equation*}
\operatorname{rank}_{k(Y)}\Omega_{k(Y)/\mathbb{Z}}
\leq 
s+\operatorname{dim}(Y)
=n-1
\end{equation*}
by (\ref{Yrank}). So we have
\begin{equation}\label{KVSup}
\HO_{x}^{i}(
R_{\et},
\mathbb{Z}/p^{r}(n)
)
=
\HO_{\et}^{i-2c}(
\kappa(x),
\mathbb{Z}/p^{r}(n-c)
)
=0
\end{equation}
for $x\in \spec(R)^{(c)}\cap Y$ and 
$i\geq n+c+2$
by Proposition \ref{KVpur} 
and \cite[p.583, Theorem 3.2]{Sh}.
In the case where $x\in \spec(R)^{(c)}\setminus Y$
and $i\geq n+c+2$, we have 
\begin{equation*}
\operatorname{trdeg}_{k(A)}\kappa(x)
\leq
\operatorname{dim}(\mathfrak{X})
-c-1
\end{equation*}
by
\cite[pp.118--119, Theorem 15.5]{Ma} and
\cite[p.119, Theorem 15.6]{Ma}.
So we have also the equation (\ref{KVSup}) 
for $x\in \spec(R)^{(c)}\setminus Y$
and $i\geq n+c+2$
by \cite[p.241, VI, Theorem 5.1]{M}
and \cite[p.367, (6.5.14) Theorem]{N}.
Consider the spectral sequence
\begin{equation*}
E_{1}^{s, t} 
=
\displaystyle
\bigoplus_{x\in \spec(R)^{(s)}}
\HO^{s+t}_{x}\left(
R_{\et},
\mathbb{Z}/p^{r}(n)
\right)
\Rightarrow
E^{s+t}
=\HO^{s+t}_{\et}
\left(
R,
\mathbb{Z}/p^{r}(n)
\right)
\end{equation*}
(cf. Theorem \ref{sp}). Then we have the equation
\begin{equation}\label{KVs2}
E_{2}^{s, t}=0
\end{equation}
for 
$t\geq n+2$
by the equation (\ref{KVSup}).
Moreover, we have the equation (\ref{KVs2})
for $t\leq n$ and $s>0$
by \cite[p.774, Theorem 1.2.2 and Theorem 1.2.5]{Ge}.
Hence we have
\begin{equation*}
E_{2}^{s, n+1}=0    
\end{equation*}
for $s>0$ by the equations (\ref{KVr}) and (\ref{KVs2}).
This completes the proof.
\end{proof}
\section{The Gersten-type conjecture for the mod $p$ \'{e}tale motivic cohomology}\label{Sahe}
In this section, we first show relations between the generalized Brauer group
and the mod $p$ \'{e}tale motivic cohomology in high degrees.
\begin{prop}\upshape\label{compi}
Let $A$ be a discrete valuation ring and 
$\pi$ the maximal ideal of $A$.
Let $R$ be 
(the henselization of)
a local ring of
a smooth algebra over $A$
with $\operatorname{char}(R)=(0, p)$.
Then we have
\begin{equation}\label{mpi}
\HO^{n+1}_{\et}(
R/(\pi), \iota^{*}\mathbb{Z}/p^{r}(n)
)
=
\HO^{n+1}_{\et}(
R/(\pi), \mathbb{Z}/p^{r}(n)
)
\end{equation}
and
\begin{equation}\label{Vmpi}
\HO^{s}_{\et}(R/(\pi), \iota^{*}\mathbb{Z}/p^{r}(n))
=0
\end{equation}
for $s\geq n+2$ and any integer $r>0$
where $\iota: \spec(R/(\pi))\to\spec(R)$ is
the inclusion of the closed fiber.
\end{prop}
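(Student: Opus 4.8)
The plan is to replace the motivic complex by the $p$-adic Tate twist via the isomorphism $\mathbb{Z}/p^{r}(n)_{\et}\simeq\mathfrak{T}_{r}(n)$ and then to study $i^{*}\mathfrak{T}_{r}(n)$ through the hypercohomology spectral sequence
$$E_{2}^{s,t}=\HO^{s}_{\et}(R/(\pi),\mathcal{H}^{t}(i^{*}\mathfrak{T}_{r}(n)))\Rightarrow\HO^{s+t}_{\et}(R/(\pi),i^{*}\mathfrak{T}_{r}(n)),$$
exactly as in the proof of Lemma \ref{Vm2m}. The decisive difference from Lemma \ref{Vm2m} is that here $R$ is \emph{not} henselian, so I cannot bound the cohomological dimension of $R/(\pi)$ by that of a residue field. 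Instead I would use that $R/(\pi)$ is the local ring of a smooth variety over the residue field of $A$, together with the coherent structure of the sheaves $\mathcal{H}^{t}(i^{*}\mathfrak{T}_{r}(n))$.

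First I would record two vanishing inputs. By Sato's definition of the Tate twist (already invoked in Lemma \ref{Vm2m}) one has $\mathcal{H}^{t}(i^{*}\mathfrak{T}_{r}(n))=0$ for $t>n$. For $t\leq n$, the construction of $\mathfrak{T}_{r}(n)$ builds these cohomology sheaves out of the nearby-cycle sheaves $i^{*}R^{t}j_{*}\mu_{p^{r}}^{\otimes n}$ and logarithmic de Rham--Witt sheaves, and by the Bloch--Kato structure theorem (cf. \cite{B-K}, \cite{SaP}) each nearby-cycle sheaf carries a finite filtration whose graded pieces are either logarithmic de Rham--Witt sheaves $\nu^{a}_{s}$ or coherent sheaves. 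Since the étale cohomology of a coherent sheaf on the local ring $R/(\pi)$ vanishes in every positive degree, and since each $\nu^{a}_{s}$ sits in a short exact sequence of étale sheaves whose remaining two terms are coherent, a long exact sequence argument gives $\HO^{s}_{\et}(R/(\pi),\nu^{a}_{s})=0$ for $s\geq 2$. Hence $\HO^{s}_{\et}(R/(\pi),\mathcal{H}^{t}(i^{*}\mathfrak{T}_{r}(n)))=0$ for all $s\geq 2$ and all $t$, so $E_{2}^{s,t}$ is concentrated in the two columns $s=0,1$ and in the rows $t\leq n$; in particular the spectral sequence degenerates at $E_{2}$.

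Reading off total degrees is then immediate. If $s\geq n+2$, each term $E_{2}^{a,b}$ with $a+b=s$ either has $a\geq 2$ (so it vanishes by the coherent bound) or $a\leq 1$ and $b\geq n+1>n$ (so $\mathcal{H}^{b}=0$); this yields the vanishing (\ref{Vmpi}). In total degree $n+1$ the only possibly nonzero term is $E_{2}^{1,n}=\HO^{1}_{\et}(R/(\pi),\mathcal{H}^{n}(i^{*}\mathfrak{T}_{r}(n)))$, because $E_{2}^{0,n+1}=\HO^{0}_{\et}(R/(\pi),\mathcal{H}^{n+1}(i^{*}\mathfrak{T}_{r}(n)))=0$ and all terms with $s\geq 2$ vanish; thus $\HO^{n+1}_{\et}(R/(\pi),i^{*}\mathfrak{T}_{r}(n))\simeq\HO^{1}_{\et}(R/(\pi),\mathcal{H}^{n}(i^{*}\mathfrak{T}_{r}(n)))$. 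On the other hand, on the closed fiber $\mathbb{Z}/p^{r}(n)_{\et}\simeq\nu_{r}^{n}[-n]$ by \cite[p.787, \S 5, (12)]{Ge}, so $\HO^{n+1}_{\et}(R/(\pi),\mathbb{Z}/p^{r}(n))=\HO^{1}_{\et}(R/(\pi),\nu_{r}^{n})$. It therefore remains only to compare $\mathcal{H}^{n}(i^{*}\mathfrak{T}_{r}(n))$ with $\nu_{r}^{n}$ after applying $\HO^{1}_{\et}(R/(\pi),-)$.

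For this last step I would use the identification $\mathcal{H}^{n}(i^{*}\mathfrak{T}_{r}(n))\simeq FM^{n}_{r}$ and the $\mathrm{dlog}$ map $FM^{n}_{r}\to\nu_{r}^{n}$ of (\ref{PrTN0}). The main obstacle lies exactly here: I must extract from \cite{SaP} that this map is surjective with \emph{coherent} kernel $\mathcal{C}$, i.e. a short exact sequence $0\to\mathcal{C}\to FM^{n}_{r}\to\nu_{r}^{n}\to 0$ in which $\mathcal{C}$ is a successive extension of coherent subquotients. Granting this, $\HO^{1}_{\et}(R/(\pi),\mathcal{C})=\HO^{2}_{\et}(R/(\pi),\mathcal{C})=0$, so the associated long exact sequence gives $\HO^{1}_{\et}(R/(\pi),FM^{n}_{r})\xrightarrow{\sim}\HO^{1}_{\et}(R/(\pi),\nu_{r}^{n})$, which combined with the previous paragraph yields (\ref{mpi}). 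I expect the verification of the coherence of this kernel — unwinding the precise graded structure of $FM^{n}_{r}$ — to be the only genuinely technical point; the spectral-sequence bookkeeping and the coherent-cohomology vanishing are routine.
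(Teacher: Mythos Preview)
Your approach is correct but takes a longer route than the paper, and the detour stems from a misconception. You write that because $R$ is not henselian you ``cannot bound the cohomological dimension of $R/(\pi)$ by that of a residue field''; in fact $\operatorname{cd}_{p}$ of \emph{any} affine $\mathbb{F}_{p}$-scheme is at most $1$ by \cite[Expos\'e X, Th\'eor\`eme 5.1]{SGA4}, with no henselian hypothesis. This single input already forces $E_{2}^{s,t}=0$ for $s\geq 2$ in your spectral sequence (all the $\mathcal{H}^{t}$ are $p^{r}$-torsion), so the whole Bloch--Kato filtration argument for the lower cohomology sheaves is unnecessary for (\ref{Vmpi}).

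The paper exploits this directly: for $r=1$ it uses the defining triangle $\nu^{n-1}_{1}[-n-1]\to i^{*}\mathfrak{T}_{1}(n)\to \tau_{\leq n}i^{*}Rj_{*}\mu_{p}$ together with $\operatorname{cd}_{p}\leq 1$ and the splitting $\HO^{n+1}_{\et}(R/(\pi),\tau_{\leq n}i^{*}Rj_{*}\mu_{p})\simeq\HO^{1}(\nu^{n}_{1})\oplus\HO^{1}(\nu^{n-1}_{1})$ from \cite[(5.9)]{Sak} to obtain both (\ref{mpi}) and (\ref{Vmpi}); the general $r$ is then handled by induction via the Bockstein triangle and the snake lemma. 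Your argument, by contrast, treats all $r$ simultaneously and replaces the citation to \cite{Sak} by the explicit short exact sequence $0\to U^{1}M^{n}_{r}\to FM^{n}_{r}\to\nu^{n}_{r}\to 0$ with $U^{1}M^{n}_{r}$ a successive extension of coherent sheaves. That sequence does hold (it is the Bloch--Kato/Hyodo filtration, cf.\ \cite[\S3.4]{SaP}), so your ``main obstacle'' is genuine but routine; once granted, coherent vanishing on the affine $\operatorname{Spec}(R/(\pi))$ gives $\HO^{1}(U^{1})=\HO^{2}(U^{1})=0$ and hence (\ref{mpi}). Your proof is thus a valid, more self-contained alternative, at the cost of unpacking structure the paper simply cites.
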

\begin{proof}\upshape
Assume that $r=1$.
We have already proved (\ref{mpi}) 
in the proof of \cite[Theorem 5.6]{Sak}
(cf. \cite[(5.6)]{Sak}).
Since $\operatorname{cd}_{p}(R/(\pi))\leq 1$
by \cite[Expos\'{e} X, Th\'{e}or\`{e}me 5.1]{SGA4}, 
we do not need to assume 
that $R$ is henselian
in the proof of \cite[(5.6)]{Sak}.
Moreover, we have
\begin{equation}\label{pl}
\HO^{n+1}_{\et}(
R/(\pi),
\tau_{\leq n}\iota^{*}Rj_{*}\mu_{p}
)
=
\HO^{n+1}_{\et}(
R/(\pi),
\mathbb{Z}/p(n)
)
\oplus
\HO^{n}_{\et}(
R/(\pi),
\mathbb{Z}/p(n-1)
)
\end{equation}
by \cite[(5.9)]{Sak}.
Here $j: \spec(R\otimes_{A}k(A))\to\spec(R)$ is the inclusion of
the generic fiber.
Since $\operatorname{cd}_{p}(R/(\pi))\leq 1$
by \cite[Expos\'{e} X, Th\'{e}or\`{e}me 5.1]{SGA4}, 
we have
\begin{equation}\label{Vps}
\HO^{s}_{\et}(R/(\pi),
\mathbb{Z}/p(n)
)  
=0
\end{equation}
and
\begin{equation}\label{Vji}
\HO^{s}_{\et}
\left(
R/(\pi),
\tau_{\leq n}\iota^{*}Rj_{*}\mu_{p}
\right)
=0
\end{equation}
for $s\geq n+2$ by the spectral sequence
\begin{equation*}
E^{s, t}_{2}
=
\HO^{s}_{\et}
\left(
R/(\pi),
\mathcal{H}^{t}(\tau_{\leq n}\iota^{*}Rj_{*}\mu_{p})
\right)
\Rightarrow
\HO^{s+t}_{\et}
\left(
R/(\pi),
\tau_{\leq n}\iota^{*}Rj_{*}\mu_{p}
\right).
\end{equation*}
If $s=n+2$, the equation (\ref{Vmpi}) follows from (\ref{pl}) and (\ref{Vji}).
If $s> n+2$, the equation (\ref{Vmpi}) follows from (\ref{Vps})
and (\ref{Vji}). Hence the statement holds for $r=1$.

By the induction on $r$, the statement follows from the snake lemma.
This completes the proof.
\end{proof}
\begin{prop}\upshape\label{gwh}
Let the notations $R$, $A$, $\pi$ be the same as above. 
Suppose that $R$ contains $p$-th roots of unity.
Then we have an exact sequence
\begin{equation*}
\HO^{n+1}_{\et}(R, \mathbb{Z}/p(n-1))
\to
\HO^{n+1}_{\et}(R, \mathbb{Z}/p(n))
\to
\HO^{n+1}_{\et}(R/(\pi), \mathbb{Z}/p(n))
\to
0
\end{equation*}
and isomorphisms
\begin{equation*}
\HO^{n+1}_{\et}(
R, 
\mathbb{Z}/p(n-1)
)
\simeq
\HO^{n+1}_{\et}(
R, 
\mathbb{Z}/p(n-k)
)
\simeq
\HO^{n+1}_{\et}(
R, 
j_{!}\mu_{p}^{\otimes (n-k)}
)
\end{equation*}
for $1\leq k\leq n$. Here 
$j: \spec(R\otimes_{A}k(A))\to\spec(R)$
is the inclusion of the generic fiber.
\end{prop}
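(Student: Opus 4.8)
The plan is to deduce both statements from the single distinguished triangle attached to $\BE^{m}_{\zeta}$, fed into its cohomology long exact sequence together with the vanishing of Proposition~\ref{compi} and the surjectivity of Proposition~\ref{Bsur}. First I would fix $m$ and form, in the derived category of $(\spec R)_{\et}$, the triangle
\[
\mathbb{Z}/p(m)_{\et}\xrightarrow{\BE^{m}_{\zeta}}\mathbb{Z}/p(m+1)_{\et}\to C_{m}\xrightarrow{+1}.
\]
Let $j$ and $i$ be the inclusions of the generic and closed fibres of $\spec R$. Because $j^{*}(\BE^{m}_{\zeta})$ is the isomorphism $\mu_{p}^{\otimes m}\to\mu_{p}^{\otimes(m+1)}$, $\zeta^{\otimes m}\mapsto\zeta^{\otimes(m+1)}$ of the lemma following Definition~\ref{DeB}, we get $j^{*}C_{m}=0$; the localization triangle $j_{!}j^{*}\to\operatorname{id}\to i_{*}i^{*}$ then gives $C_{m}\simeq i_{*}i^{*}C_{m}$, so that $\HO^{s}_{\et}(R,C_{m})=\HO^{s}_{\et}(Y,i^{*}C_{m})$ with $Y=\spec(R/(\pi))$. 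Applying $i^{*}$ to the triangle and using the two assertions of Proposition~\ref{compi}, i.e. $\HO^{s}_{\et}(Y,i^{*}\mathbb{Z}/p(w))=0$ for $s\ge w+2$ and $\HO^{w+1}_{\et}(Y,i^{*}\mathbb{Z}/p(w))=\HO^{w+1}_{\et}(Y,\mathbb{Z}/p(w))$, I can read off the relevant cohomology of $C_{m}$.

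For the isomorphism I would take $m\le n-2$. Then $\HO^{n+1}_{\et}(Y,i^{*}\mathbb{Z}/p(m))$, $\HO^{n+1}_{\et}(Y,i^{*}\mathbb{Z}/p(m+1))$ and $\HO^{n+2}_{\et}(Y,i^{*}\mathbb{Z}/p(m))$ all vanish, whence $\HO^{n+1}_{\et}(R,C_{m})=0$ and $\BE^{m}_{\zeta}$ is surjective on $\HO^{n+1}_{\et}(R,-)$. Injectivity amounts to the vanishing of the connecting map $\HO^{n}_{\et}(R,C_{m})\to\HO^{n+1}_{\et}(R,\mathbb{Z}/p(m))$, i.e. to surjectivity of the cone map $c_{*}\colon\HO^{n}_{\et}(R,\mathbb{Z}/p(m+1))\to\HO^{n}_{\et}(R,C_{m})$. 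Since $C_{m}$ is supported on $Y$, adjunction factors $c_{*}$ as the closed-fibre cone map $\HO^{n}_{\et}(Y,i^{*}\mathbb{Z}/p(m+1))\to\HO^{n}_{\et}(Y,i^{*}C_{m})$ (already surjective, as $\HO^{n+1}_{\et}(Y,i^{*}\mathbb{Z}/p(m))=0$) precomposed with the restriction $r\colon\HO^{n}_{\et}(R,\mathbb{Z}/p(m+1))\to\HO^{n}_{\et}(Y,i^{*}\mathbb{Z}/p(m+1))$. Now $r$ is surjective: its target is $0$ for $m\le n-3$ by Proposition~\ref{compi}, and for $m=n-2$ it is the top-degree restriction $\HO^{n}_{\et}(R,\mathbb{Z}/p(n-1))\to\HO^{n}_{\et}(R/(\pi),\mathbb{Z}/p(n-1))$, surjective by Proposition~\ref{Bsur}. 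Composing the resulting isomorphisms $\BE^{m}_{\zeta}$ for $n-k\le m\le n-2$ gives the asserted isomorphism.

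For the exact sequence I would take $m=n-1$. Here the same computation yields $\HO^{n+1}_{\et}(R,C_{n-1})\simeq\HO^{n+1}_{\et}(Y,\mathbb{Z}/p(n))=\HO^{n+1}_{\et}(R/(\pi),\mathbb{Z}/p(n))$, with the closed-fibre cone map an isomorphism in degree $n+1$. The triangle's long exact sequence over $R$,
\[
\HO^{n+1}_{\et}(R,\mathbb{Z}/p(n-1))\xrightarrow{\BE^{n-1}_{\zeta}}\HO^{n+1}_{\et}(R,\mathbb{Z}/p(n))\xrightarrow{c_{*}}\HO^{n+1}_{\et}(R,C_{n-1})\to\cdots,
\]
is then exact at the middle term, and $c_{*}$ is surjective because it equals that isomorphism precomposed with the restriction $\HO^{n+1}_{\et}(R,\mathbb{Z}/p(n))\to\HO^{n+1}_{\et}(R/(\pi),\mathbb{Z}/p(n))$, surjective by Proposition~\ref{Bsur}. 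This is exactly the claimed three-term sequence.

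The main obstacle is the surjectivity of the top-degree restriction to the closed fibre, which is precisely the content of Proposition~\ref{Bsur} (through the $d\log$ description of the morphism~(\ref{PrTN}) and the symbol computation of Lemma~\ref{Cpf}); the remaining work is formal manipulation of the triangle and Proposition~\ref{compi}. The one further point needing care is that the cone map $c_{*}$ genuinely agrees, under the above identifications, with the natural restriction map of Proposition~\ref{Bsur}, which follows from the adjunction $i^{*}\dashv i_{*}$ and the support of $C_{m}$ on $Y$.
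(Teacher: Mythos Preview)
Your argument is correct and rests on the same two ingredients as the paper (Proposition~\ref{compi} and Proposition~\ref{Bsur}), but you organize them around a different distinguished triangle. The paper applies the localization triangle
\[
j_{!}\mu_{p}^{\otimes m}\to\mathbb{Z}/p(m)_{\et}\to i_{*}i^{*}\mathbb{Z}/p(m)_{\et}\xrightarrow{+1}
\]
separately for each twist $m$, then uses $\mu_{p}\subset R$ to identify all the groups $\HO^{n+1}_{\et}(R,j_{!}\mu_{p}^{\otimes m})$ with one another; Proposition~\ref{compi} gives $\HO^{s}_{\et}(R,j_{!}\mu_{p}^{\otimes m})\simeq\HO^{s}_{\et}(R,\mathbb{Z}/p(m))$ for $s\ge m+3$, and Proposition~\ref{Bsur} extends this to $s\ge m+2$. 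You instead take the cone $C_{m}$ of $\BE^{m}_{\zeta}$ directly. The two are related by the map of localization triangles induced by $\BE^{m}_{\zeta}$: since $j^{*}\BE^{m}_{\zeta}$ is an isomorphism, your $C_{m}$ is exactly the cone of $i_{*}i^{*}\mathbb{Z}/p(m)\to i_{*}i^{*}\mathbb{Z}/p(m+1)$, so the two computations are formally equivalent. Your route has the advantage of tracking the map $\BE^{m}_{\zeta}$ explicitly, while the paper's route makes the twist-independence of $\HO^{s}_{\et}(R,j_{!}\mu_{p}^{\otimes m})$ transparent in one stroke. The compatibility you flag at the end (that the factored map through $\HO^{*}_{\et}(Y,i^{*}\mathbb{Z}/p(\cdot))$ agrees, under Proposition~\ref{compi}, with the map of Proposition~\ref{Bsur}) is indeed the only point requiring care, and it does follow from the description of (\ref{PrTN0}) as the truncation map $i^{*}\mathfrak{T}_{1}(m)\to\mathcal{H}^{m}(i^{*}\mathfrak{T}_{1}(m))[-m]$ composed with the projection to $\nu_{1}^{m}[-m]$.
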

\begin{proof}\upshape
Let $\iota: \spec(R/(\pi))\to \spec(R)$ be the inclusion 
of the closed fiber.
Since $j_{!}$ is exact (cf. \cite[p.76, II, Proposition 3.14 (b)]{M}), 
we have an isomorphism
\begin{equation*}
j_{!}(\mathbb{Z}/p(n)_{\et}
)
\simeq
j_{!}\mu_{p}^{\otimes n}
\end{equation*}
by \cite[p.774, Theorem 1.2.4]{Ge}.
So
we have a distinguished
triangle
\begin{equation*}
\cdots\to    
j_{!}\mu_{p}^{\otimes n}
\to \mathbb{Z}/p(n)_{\et}
\to \iota_{*}\iota^{*}(
\mathbb{Z}/p(n)_{\et}
)\to\cdots
\end{equation*}
by \cite[pp.75--76, II, Remark 3.13]{M}.
Hence we have an exact sequence
\begin{align*}
&\HO^{n+1}_{\et}(R, j_{!}\mu_{p}^{\otimes n})\to  
\HO^{n+1}_{\et}(R, \mathbb{Z}/p(n))\to
\HO^{n+1}_{\et}(R/(\pi), \mathbb{Z}/p(n))  \\
\to
&\HO^{n+2}_{\et}(R, j_{!}\mu_{p}^{\otimes n})\to  
\HO^{n+2}_{\et}(R, \mathbb{Z}/p(n))
\to 0
\end{align*}
and an isomorphism 
\begin{equation*}
\HO^{s}_{\et}(
R, j_{!}\mu_{p}^{\otimes n}
) 
=
\HO^{s}_{\et}(
R, \mathbb{Z}/p(n)
)
\end{equation*}
for $s\geq n+3$ by Proposition \ref{compi}.
Since $R$ contains $p$-th roots of unity, 
the statement follows from
Proposition \ref{Bsur} and Proposition \ref{compi}.
\end{proof}
\begin{lem}\upshape\label{Zeh}
Let $X$ be a smooth scheme over the spectrum of
a discrete valuation ring $A$ of mixed characteristic $(0, p)$.
We consider the Cousin complex $\BO^{q, n}_{m}(X)^{\bullet}$ (cf. \S\ref{Prel}).
Then we have the followings:
\begin{itemize}
\item[(i)] Let $m$ be any positive integer. 
Then we have
\begin{equation*}
\HO_{\Zar}^{j}(X, 
R^{s}\epsilon_{*}\mathbb{Z}/m(n)
)
=
\HO^{j}(\BO^{s-n, n}_{m}(
X
)^{\bullet}
)
\end{equation*}
for $j\geq 0$ and $s\leq n$ 
where $\epsilon: X_{\et}\to X_{\Zar}$ is the canonical map of sites
and $\epsilon_{*}$ is the forgetful functor.
\item[(ii)] If $A$ contains $p$-th roots of unity, 
we have
\begin{equation*}
\HO_{\Nis}^{j}(X, 
R^{s}\alpha_{*}\mathbb{Z}/p(n)
)
=
\HO^{j}(\BO^{s-n, n}_{p}(
X
)^{\bullet}
)
\end{equation*}
for $j\geq 0$ and $s>n$ where $\alpha: X_{\et}\to X_{\Nis}$ is the canonical map of sites
and $\alpha_{*}$ is the forgetful functor.
\end{itemize}
\end{lem}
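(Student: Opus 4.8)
The plan is to recognize the statement as an instance of the Bloch--Ogus theorem for the coniveau spectral sequence. By Definition \ref{DefBO}, the complex $\BO^{s-n,n}_{m}(X)^{\bullet}$ is by construction the row $E_{1}^{\bullet,s}$ of the spectral sequence (\ref{sp1}) for $\mathcal{F}^{\bullet}=\mathbb{Z}/m(n)_{\et}$: taking $q=s-n$ so that $n+q=s$, its $j$-th term is $\bigoplus_{x\in X^{(j)}}\HO^{s+j}_{x}(X_{\et},\mathbb{Z}/m(n))$ and hence $\HO^{j}(\BO^{s-n,n}_{m}(X)^{\bullet})=E_{2}^{j,s}(X,\mathbb{Z}/m(n))$. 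Thus in both (i) and (ii) it suffices to identify this $E_{2}$-term with the sheaf cohomology of the relevant higher direct image.

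First I would sheafify the Cousin complex, following the formalism recalled in \cite[Part I]{C-H-K}. The higher direct image $R^{s}\epsilon_{*}\mathbb{Z}/m(n)_{\et}$ (resp.\ $R^{s}\alpha_{*}\mathbb{Z}/p(n)$) is the Zariski (resp.\ Nisnevich) sheafification of $U\mapsto\HO^{s}_{\et}(U,\mathbb{Z}/m(n))$, with stalk at $x$ equal to $\HO^{s}_{\et}(\mathcal{O}_{X,x},\mathbb{Z}/m(n))$ in the Zariski case and $\HO^{s}_{\et}(\mathcal{O}^{h}_{X,x},\mathbb{Z}/p(n))$ in the Nisnevich case. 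The terms of the Cousin complex are the global sections of skyscraper-type sheaves $\mathcal{C}^{j}=\bigoplus_{x\in X^{(j)}}(i_{x})_{*}(\cdots)$ supported on points of codimension $j$, and such sheaves are flasque, hence acyclic for Zariski (resp.\ Nisnevich) cohomology. Therefore the lemma reduces to showing that, augmented by $R^{s}\epsilon_{*}\mathbb{Z}/m(n)\to\mathcal{C}^{0}$ (resp.\ the Nisnevich analogue), the complex $\mathcal{C}^{\bullet}$ is a resolution, i.e.\ is exact at every stalk; granting this, the flasque resolution computes the sheaf cohomology and $\HO^{j}_{\Zar}(X,R^{s}\epsilon_{*}\mathbb{Z}/m(n))=\HO^{j}(\Gamma(X,\mathcal{C}^{\bullet}))=\HO^{j}(\BO^{s-n,n}_{m}(X)^{\bullet})$, and likewise in the Nisnevich case.

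The decisive step is this stalkwise (local) exactness, which is exactly the exactness of the augmented sequence (\ref{InBO}) at the relevant local rings. For part (i), $s\leq n$: the Beilinson--Lichtenbaum isomorphism $\tau_{\leq n}R\epsilon_{*}\mathbb{Z}/m(n)_{\et}\simeq\mathbb{Z}/m(n)_{\Zar}$ (\cite[p.774, Theorem 1.2.2]{Ge} and \cite{V}) gives $R^{s}\epsilon_{*}\mathbb{Z}/m(n)=\HOC^{s}(\mathbb{Z}/m(n)_{\Zar})$ for $s\leq n$, and the exactness of (\ref{InBO}) at the Zariski local rings $\mathcal{O}_{X,x}$ is Geisser's Gersten resolution for Zariski motivic cohomology in degrees $\leq n$ (\cite[p.774, Theorem 1.2.2 and Theorem 1.2.5]{Ge}). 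For part (ii), $s>n$ and $m=p$: the Zariski Gersten property fails, but the Nisnevich stalks are the henselian local rings $\mathcal{O}^{h}_{X,x}$, and exactness of (\ref{InBO}) over these rings for $s\geq n+1$ and $m=p$ is precisely Theorem \ref{IntGC}(i), equivalently Property $\operatorname{P}^{i,n}_{5}$ proved in Theorem \ref{main}.

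The main obstacle is exactly this local-exactness input, and it is where the paper's earlier work is indispensable: passing from the Zariski to the Nisnevich topology in the range $s>n$ is forced precisely because only the henselian local rings enjoy the Gersten exactness supplied by Theorem \ref{main}---note that Lemma \ref{Vm2m}, which furnishes the vanishing $\HO^{n+k}_{\et}(R,\mathfrak{T}_{r}(n))=0$ for $k\geq 2$ underlying that exactness, requires $R$ henselian. A secondary point needing care is the head term: one must check that the augmentation $R^{s}\epsilon_{*}\mathbb{Z}/m(n)\to\mathcal{C}^{0}$ has the correct kernel at codimension $0$, i.e.\ that $E_{2}^{0,s}$ agrees with the global sections of the sheaf, which follows from the same stalkwise exactness together with the compatibility of the coniveau complex with \'etale localization recorded in Lemma \ref{spsqc}. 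I would therefore organize the write-up by treating the sheafification and flasque-acyclicity formally via \cite[Part I]{C-H-K}, and then invoking the two exactness results case by case.
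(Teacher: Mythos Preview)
Your proposal is correct and follows essentially the same route as the paper: both arguments sheafify the Cousin complex into a resolution by skyscraper (flabby) sheaves and then verify stalkwise exactness using \cite[Theorem 1.2.2 and 1.2.5]{Ge} for $s\leq n$ in (i) and Theorem \ref{main} (the henselian Gersten property) for $s>n$ in (ii). The only minor imprecision is your parenthetical ``equivalently Property $\operatorname{P}_{5}^{i,n}$'': for $s\geq n+2$ one also needs Property $\operatorname{P}_{3}$ from Theorem \ref{main} to reduce $\BO^{s-n,n}$ to a case handled by $\operatorname{P}_{5}$, so cite Theorem \ref{IntGC}(i) (or Theorem \ref{main} in full) rather than $\operatorname{P}_{5}$ alone.
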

\begin{proof}\upshape
First we show (i).
By \cite[p.774, Theorem 1.2.1]{Ge}, we have
\begin{equation*}
\HO^{s+j}_{x}(
X_{\et}, \mathbb{Z}/m(n)
)
=
\HO^{s-j}_{\et}(
\kappa(x), 
\mathbb{Z}/m(n-j)
)
\end{equation*}
for $x\in X^{(j)}$.
So we have a flabby resolution
%
\begin{align*}
0&\to R^{s}\epsilon_{*}\mathbb{Z}/m(n)
\to 
\bigoplus_{x\in X^{(0)}}
(i_{x})_{*}\HO^{s}_{\et}(
\kappa(x), \mathbb{Z}/m(n)
)  \\
&
\to
\bigoplus_{x\in X^{(1)}}
(i_{x})_{*}\HO^{s-1}_{\et}(
\kappa(x), \mathbb{Z}/m(n-1)
)\to\cdots  
\end{align*}
%
by \cite[p.774, Theorem 1.2.2 and Theorem 1.2.5]{Ge}.
Hence (i) follows.

Next we show (ii). Since the Nisnevich cohomological
dimension of a field is $0$ by 
\cite[pp.279--280, 1.32. Theorem]{Ni},
a presheaf
\begin{equation*}
(i_{x})_{*}R^{t}(i^{\prime}_{x})^{!}
\left(R\alpha_{*}\mathbb{Z}/p(n)\right)
\end{equation*}
is a flabby sheaf 
for any integer $t\geq 0$
by \cite[p.89, III, Lemma 1.19]{Ge}.
Here we write 
$i^{\prime}_{x}$
for the canonical map
\begin{math}
\spec(\kappa(x))
\to
\spec(\mathcal{O}_{X, x}).
\end{math}
So we have a flabby resolution
%
\begin{align*}
0&\to  
R^{s}\alpha_{*}\mathbb{Z}/p(n)
\to
\bigoplus_{x\in X^{(0)}}
(i_{x})_{*}R^{s}(i^{\prime}_{x})^{!}
\left(R\alpha_{*}\mathbb{Z}/p(n)\right)
\\
&\to
\bigoplus_{x\in X^{(1)}}
(i_{x})_{*}R^{s+1}(i^{\prime}_{x})^{!}
\left(R\alpha_{*}\mathbb{Z}/p(n)\right) 
\to\cdots
\end{align*}
%
by Theorem \ref{premain}.
Moreover, we have an isomorphism
\begin{equation*}
\HO_{\Nis}^{t}
\left(
K, R(i^{\prime}_{x})^{!}(R\alpha_{*}\mathbb{Z}/p(n))
\right)
\xrightarrow{\simeq}
\HO^{0}_{\Nis}
\left(
K, R^{t}(i^{\prime}_{x})^{!}(
R\alpha_{*}\mathbb{Z}/p(n))
\right)
\end{equation*}
for any integer $t\geq 0$ 
and a finite \'{e}tale extension $K$ of $\kappa(x)$
because the Nisnevich cohomological dimension of a field is $0$.
Hence (ii) follows from \cite[p.776, Proposition 2.2 a)]{Ge}.
This completes the proof.
\end{proof}
\begin{prop}\upshape\label{Asin}
Let $R$ be a local ring of a smooth algebra over a discrete
valuation ring $A$ of mixed characteristic $(0, p)$. 
Let $n\geq 0$ be an integer.
Suppose that 
$R$ contains $p$-th roots of unity. 

If the homomorphism
\begin{equation*}
\HO^{n+r}_{\et}(
R, 
\mathbb{Z}/p(n)
)    
\to
\HO^{n+r}_{\et}(
k(R), 
\mathbb{Z}/p(n)
) 
\end{equation*}
is injective for $r\geq 1$, then the sequence
%
\begin{align*}
0
&
\to  
\HO^{n+r}_{\et}(
R, 
\mathbb{Z}/p(n)
)    
\to
\bigoplus_{x\in \spec(R)^{(0)}}
\HO^{n+r}_{x}(
R_{\et}, 
\mathbb{Z}/p(n)
)\\
&
\to
\bigoplus_{x\in \spec(R)^{(1)}}
\HO^{n+r+1}_{x}(
R_{\et}, 
\mathbb{Z}/p(n)
)
\to \cdots
\end{align*}
is exact for $r\geq 1$.
\end{prop}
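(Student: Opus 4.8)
The plan is to extract the sequence from the coniveau spectral sequence (\ref{spm}) applied to $X=\spec(R)$ with coefficients $\mathbb{Z}/p(n)$. As $R$ is a regular local ring, hence a domain, $\spec(R)^{(0)}$ is the single generic point and $E_1^{0,\,n+r}=\HO^{n+r}_{\et}(k(R),\mathbb{Z}/p(n))$, so the displayed complex is precisely the augmentation of the $E_1^{\bullet,\,n+r}$-row, i.e.\ of the Cousin complex $\BO^{r,n}_{p}(\spec(R))^{\bullet}$ of Definition \ref{DefBO}. Consequently its exactness is equivalent to the two assertions $E_2^{j,\,n+r}(R,n)=0$ for $j\geq 1$ and the edge identification of $\HO^{n+r}_{\et}(R,\mathbb{Z}/p(n))$ with $E_2^{0,\,n+r}(R,n)=\ker d_1^{0,\,n+r}(R,n)$, the edge map being restriction to the generic point.

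First I would assemble the unconditional inputs. By \cite[p.774, Theorem 1.2.2 and Theorem 1.2.5]{Ge} one has $E_2^{s,t}(R,n)=0$ for $s>0$ and $t\leq n$, exactly as in (\ref{Vsp2a}); by (\ref{Binj}) the restriction $\HO^{n+1}_{\et}(R',\mathbb{Z}/p(n))\to\HO^{n+1}_{\et}(k(R'),\mathbb{Z}/p(n))$ is injective for every local ring $R'$ of a smooth $A$-algebra, so the case $r=1$ of the hypothesis is automatic and the new content lies in $r\geq 2$; and by Lemma \ref{Vm2m} the stalks of $R^{n+r}\alpha_{*}\mathbb{Z}/p(n)$ at the henselizations along the closed fibre vanish for $r\geq 2$, so that (via Lemma \ref{Zeh}(ii), which rewrites the positive Cousin cohomology as $\HO^{j}_{\Nis}(\spec(R),R^{n+r}\alpha_{*}\mathbb{Z}/p(n))$) this sheaf is, for $r\geq 2$, supported on the generic fibre, where it is governed by $\mu_{p}^{\otimes n}$ and the classical Gersten resolution.

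The injectivity hypothesis now enters through the edge map: its injectivity in every degree $m=n+r\geq n+1$ forces $F^{1}\HO^{m}_{\et}(R,\mathbb{Z}/p(n))=0$, and combined with the low-degree vanishing this yields $E_\infty^{s,t}(R,n)=0$ for all $s\geq 1$. The remaining, and hardest, step is to upgrade this vanishing of $E_\infty$ to the vanishing of $E_2^{j,\,n+r}$ itself; a priori a higher differential issuing from the zeroth column (the generic point) could manufacture classes in the Cousin complex that only die at $E_\infty$, so what must really be shown is that the spectral sequence degenerates at $E_2$ in the whole range $t\geq n+1$. I would attack this by induction on $\dim(R)$: for a point $x$ of positive codimension the local contribution is computed by the lower-dimensional smooth-local ring $R_x=\mathcal{O}_{X,x}$, to which the inductive hypothesis applies once the requisite generic-point injectivity for $R_x$ is known --- automatic in degree $n+1$ by (\ref{Binj}) and otherwise inherited in the same form --- while the surjectivity of the boundary map at the closed point is supplied by the residue and purity computations of Proposition \ref{2dex}, Proposition \ref{Bsur} and Proposition \ref{gwh}. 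The crux, which I expect to be the genuine obstacle, is exactly this reconciliation: forcing every differential out of the zeroth column into the region $s\geq 1,\ t\geq n+1$ to vanish, equivalently proving the $E_2$-degeneration there, using only the generic-point injectivity together with the surjectivity of residues at the closed point.
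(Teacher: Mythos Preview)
Your framework is correct---the Cousin complex is the $E_1^{\bullet,\,n+r}$-row of the coniveau spectral sequence, and the target is indeed the vanishing $E_2^{j,\,n+r}(R,n)=0$ for $j\geq 1$ together with the edge identification---and you rightly flag the passage from $E_\infty$-vanishing to $E_2$-vanishing as the crux. But your proposed resolution of that crux has a genuine gap, and the paper proceeds by a different induction.

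The problem is the phrase ``otherwise inherited in the same form''. To run induction on $\dim(R)$ you would apply the proposition to $R_x$ for $x$ of positive codimension; but the proposition is \emph{conditional}, and its hypothesis for $R_x$ is the injectivity of $\HO^{n+r}_{\et}(R_x,\mathbb{Z}/p(n))\to\HO^{n+r}_{\et}(k(R),\mathbb{Z}/p(n))$ for all $r\geq 1$. This is not a consequence of the same injectivity for $R$: the map for $R$ factors as $\HO^{n+r}_{\et}(R,\cdot)\to\HO^{n+r}_{\et}(R_x,\cdot)\to\HO^{n+r}_{\et}(k(R),\cdot)$, and injectivity of the composite says nothing about injectivity of the second arrow. (For $x$ on the closed fibre one does not even have the vanishing of Lemma \ref{Vm2m}, since $R_x$ is not henselian.) So the inductive step does not close.

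The paper avoids this entirely by inducting on the Cousin degree $j$ rather than on $\dim(R)$. It first upgrades the hypothesis to a three-term exact sequence (\ref{rpex}) in every degree $\geq n+2$, using \cite[Theorem 4.6]{Sak} in weight $n+r$ together with Theorem \ref{main} and Proposition \ref{gwh} to pass to weight $n$; this pins down $E_2^{0,\,s}=\HO^{s}_{\et}(R,\mathbb{Z}/p(n))$ for $s\geq n+1$. Then, via Lemma \ref{Zeh}(ii), it rewrites $\HO^{j}(\BO^{q,n}_{p}(R)^{\bullet})$ as Nisnevich cohomology of $R^{n+q}\alpha_{*}\mathbb{Z}/p(n)$, and compares the truncations $\tau_{\leq s-1}$ and $\tau_{\geq s}$ of $\tau_{\geq n+1}R\alpha_{*}\mathbb{Z}/p(n)$ to obtain the vanishing (\ref{sNis}). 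The induction on $j$ then toggles between the two complexes $\BO^{1,n}_{p}(R)^{\bullet}$ and $\BO^{2,n-1}_{p}(R)^{\bullet}$ using the short exact sequence of complexes furnished by Property $\operatorname{P}_{4}$ of Theorem \ref{main} (together with Shiho's Gersten resolution on the closed fibre and Proposition \ref{Bsur} for the base case). The point is that this exact sequence of complexes supplies the missing control on differentials that your dimension induction cannot provide: it is precisely what forces the higher $d_r$ out of column zero to vanish, by relating each $\HO^{j}$ of the Cousin complex in weight $n$ to that in weight $n-1$ and to the (known) closed-fibre complex.
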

\begin{proof}\upshape
By \cite[Theorem 4.6]{Sak}, the sequence
%
\begin{align*}
0
&
\to
\HO^{n+r+1}_{\et}(R,
\mathbb{Z}/p(n+r)
)
\to
\HO^{n+r+1}_{\et}(k(R),
\mathbb{Z}/p(n+r)
) \\
&
\to
\bigoplus_{x\in \spec(R)^{(1)}}
\HO^{n+r+2}_{x}(
R_{\et}, 
\mathbb{Z}/p(n+r)
)
\end{align*}
%
is exact.
Moreover, the sequence 
%
\begin{align*}
0
&
\to 
\HO^{n+r+1}_{\et}(
\kappa((\pi)),
\mathbb{Z}/p(n+r)
)
\to
\HO^{n+r+2}_{(\pi)}(
R_{\et}, \mathbb{Z}/p(n)
)
\\
&
\to
\HO^{n+r+2}_{(\pi)}(
R_{\et}, \mathbb{Z}/p(n+r)
)
\end{align*}
%
is exact 
for $1\leq r\leq n$
by Theorem \ref{premain}.
Since 
$R$ contains $p$-th roots of unity,
we have isomorphisms
\begin{equation*}
\mu_{p}^{\otimes n}
\simeq
\mu_{p}^{\otimes (n+r)}
\end{equation*}
and
\begin{equation*}
\HO^{n+r+2}_{x}(R_{\et}, \mathbb{Z}/p(n))
\simeq 
\HO^{n+r+2}_{x}(R_{\et}, \mathbb{Z}/p(n+r))
\end{equation*}
for $x\in\spec(R)^{(1)}\setminus\spec(R/(\pi))$.
Here $\pi$ is a prime element of $A$.
Hence
the sequence
\begin{align}\label{rpex}
0
&\to    
\HO^{n+r+1}_{\et}(
R, 
\mathbb{Z}/p(n)
)
\to    
\HO^{n+r+1}_{\et}(
k(R), 
\mathbb{Z}/p(n)
) \nonumber
\\
&\to
\bigoplus_{x\in \spec(R)^{(1)}}
\HO^{n+r+2}_{x}(
R_{\et}, 
\mathbb{Z}/p(n)
)
\end{align}
is exact for $r\geq 1$
by Proposition \ref{gwh}
because 
\begin{math}
\kappa((\pi))=k(R/(\pi))
\end{math}
and the homomorphism
\begin{equation*}
\HO^{n+r+1}_{\et}(R/(\pi),
\mathbb{Z}/p(n+r))
\to
\HO^{n+r+1}_{\et}(
k(R/(\pi)),
\mathbb{Z}/p(n+r))
\end{equation*}
is injective by \cite[p.600, Theorem 4.1]{Sh}.

So it suffices to show that
\begin{equation}\label{VHa}
\HO^{j}(
\BO^{1+r, n}_{p}(
R
)^{\bullet}
)
=0    
\end{equation}
for $r=0, 1$ and $j>0$. 
Let $\beta: \spec(R)_{\Nis}\to \spec(R)_{\Zar}$ be the canonical map of sites and
$\beta^{*}$ is the sheafification functor. 
Since 
\begin{math}
\beta^{*}\mathbb{Z}/p(n)_{\Zar}
=
\mathbb{Z}/p(n)_{\Nis},
\end{math}
we have a distinguished triangle
\begin{equation*}
\cdots\to 
\mathbb{Z}/p(n)_{\Nis}
\to
R\alpha_{*}\mathbb{Z}/p(n)_{\et}
\to
\tau_{\geq n+1}
R\alpha_{*}\mathbb{Z}/p(n)_{\et}
\to
\cdots
\end{equation*}
by \cite[p.774, Theorem 1.2.2]{Ge}.
So we have an isomorphism
\begin{equation*}
\HO_{\et}^{s}(R, \mathbb{Z}/p(n))
\simeq
\HO^{s}_{\Nis}
(
R,
\tau_{\geq n+1}R\alpha_{*}
\mathbb{Z}/p(n)
)
\end{equation*}
for $s\geq n+1$ by \cite[p.786, Corollary 4.4]{Ge}. 
Moreover, we have isomorphisms
\begin{align*}
\HO^{s}_{\Nis}
\left(
R, \tau_{\geq s}(\tau_{\geq n+1}R\alpha_{*}\mathbb{Z}/p(n)
)
\right)
\simeq
\HO^{0}_{\Nis}
(R, R^{s}\alpha_{*}\mathbb{Z}/p(n)
)
\simeq
\HO^{s}_{\et}(R, \mathbb{Z}/p(n))
\end{align*}
for $s-1\geq n+1$ by (\ref{rpex}) 
and Lemma \ref{Zeh} (ii). 
Hence we have
\begin{equation}\label{sNis}
\HO^{s}_{\Nis}\left(
R,
\tau_{\leq s-1}(\tau_{\geq n+1}R\alpha_{*}
\mathbb{Z}/p(n)
)
\right)    
=0
\end{equation}
for $s-1\geq n+1$. 
Then we show the equation (\ref{VHa}) by the induction on $j$.

Assume that $j=1$. By the equation 
(\ref{sNis}) for $s=n+2$
and Lemma \ref{Zeh} (ii), 
we have
\begin{equation*}
\HO^{1}(
\BO_{p}^{1, n}(
R
)^{\bullet}
)
=
\HO^{1}_{\Nis}(
R,
R\alpha_{*}^{n+1}\mathbb{Z}/p(n)
)
=0.
\end{equation*}
On the other hand, the sequence
\begin{align}\label{hex}
\HO^{0}(\BO^{1, n}_{p}(
R
)^{\bullet}
)
\to 
\HO^{0}(\BO^{1, n}_{p}(
R/(\pi)
)^{\bullet}
)
\to
\HO^{1}(\BO^{2, n-1}_{p}(
R
)^{\bullet}
)
\to 
\HO^{1}(\BO^{1, n}_{p}(
R
)^{\bullet}
)
\to 0
\end{align}
is exact by Lemma \ref{Cornat}, Theorem \ref{premain} and \cite[p.600, Theorem 4.1]{Sh}.
Then the first map in (\ref{hex}) equals up to sign the natural map
\begin{equation*}
\HO^{n+1}_{\et}(
R,
\mathbb{Z}/p(n)
)    
\to
\HO^{n+1}_{\et}(
R/(\pi),
\mathbb{Z}/p(n)
)
\end{equation*}
by Lemma \ref{Cornat}.
So the first map in (\ref{hex}) is surjective 
by Proposition \ref{Bsur}. 
Hence we have an isomorphism
\begin{equation*}
\HO^{1}(\BO^{2, n-1}_{p}(
R
)^{\bullet}
)
\to
\HO^{1}(\BO^{1, n}_{p}(
R
)^{\bullet}
)
\end{equation*}
and the equation (\ref{VHa}) for $j=1$ and $r=0, 1$.

Assume that we have the equation (\ref{VHa}) 
for $j\leq i$. 
Then we have
\begin{equation*}
\HO^{s}_{\Nis}
\left(
R,
\tau_{\leq s-1}(
\tau_{\geq n+2}R\alpha_{*}
\mathbb{Z}/p(n)
)
\right)
=0
\end{equation*}
for $s\leq n+2+i$ by the assumption.
So we have the equation (\ref{VHa})
for $j=i+1$ and $r=0$ by the equation (\ref{sNis})
for $s=n+i+2$.
Moreover, we have an 
isomorphism
\begin{equation*}
\HO^{i+1}(\BO^{2, n-1}_{p}(
R
)^{\bullet}
)  
\to
\HO^{i+1}(\BO^{1, n}_{p}(
R
)^{\bullet}
) 
\end{equation*}
by Theorem \ref{premain} and \cite[p.600, Theorem 4.1]{Sh}.
Hence we have the equation (\ref{VHa}) for 
$j=i+1$ and $r=1$.
This completes the proof.
\end{proof}
Finally, we prove the Gersten-type conjecture for the mod $p$ \'{e}tale motivic cohomology 
(Theorem \ref{IntGCL}).
\begin{lem}\upshape\label{Surrp}
Let $A$ be a discrete valuation ring of mixed characteristic $(0, p)$ and $\pi$ a prime element of $A$.
Let $R$ be 
a local ring of a smooth scheme over $\spec(A)$,
$\iota: \spec(R/(\pi))\to \spec(R)$ 
the inclusion of the closed fiber and
$j: \spec(R[\pi^{-1}])\to \spec(R)$
the inclusion of the generic fiber.
Suppose that
$R$ contains $p$-th roots of unity.

Then the homomorphism
\begin{equation*}
\HO^{n+1}_{\et}(
R, 
\tau_{\leq n}Rj_{*}\mu_{p}^{\otimes n}
)    
\to
\HO^{n+1}_{\et}(
R/(\pi), 
\tau_{\leq n}\iota^{*}Rj_{*}\mu_{p}^{\otimes n}
)
\end{equation*}
is surjective.
\end{lem}
\begin{proof}\upshape
By Proposition \ref{compi},
the composite
\begin{equation*}
\HO^{n+1}_{\et}(
R/(\pi),
\iota^{*}\mathfrak{T}_{1}(n)
)
\to
\HO^{1}_{\et}(R/(\pi), 
\iota^{*}\mathcal{H}^{n}(\mathfrak{T}_{1}(n))
)
\to
\HO^{1}_{\et}(R/(\pi), \Omega^{n}_{R/(\pi), \log})
\end{equation*}
is an isomorphism. Here $\Omega^{n}_{R/(\pi), \log}$ denotes
$W_{1}\Omega^{n}_{\spec(R/(\pi)), \log}$.
By Proposition \ref{Bsur} and 
Lemma \ref{compH},
the homomorphism
\begin{equation*}
\HO^{n+1}_{\et}(
R, \tau_{\leq n}Rj_{*}\mu_{p}^{\otimes n}
)
\to
\HO^{1}_{\et}(R/(\pi), 
\Omega^{n-1}_{
R/(\pi), \log}
)
\end{equation*}
is surjective. 
Hence we have a commutative diagram
\footnotesize
\begin{equation*}
\xymatrix
{
\HO^{n+1}_{\et}(
R, \mathfrak{T}_{1}(n))
\ar[r]\ar[d]
&
\HO^{n+1}_{\et}(
R, \tau_{\leq n}Rj_{*}\mu_{p}^{\otimes n}
)
\ar[r]\ar[d]
&
\HO^{1}_{\et}(R/(\pi), 
\Omega^{n-1}_{R/(\pi), \log})
\ar[r]\ar@{=}[d]
&
0 
\\
\HO^{1}_{\et}(
R/(\pi), 
\Omega^{n}_{R/(\pi), \log}
)
\ar[r]
&
\HO^{n+1}_{\et}(
R/(\pi), \tau_{\leq n}\iota^{*}Rj_{*}\mu_{p}^{\otimes n}
)
\ar[r]
&
\HO^{1}_{\et}(R/(\pi), 
\Omega^{n-1}_{R/(\pi), \log})
}    
\end{equation*}
\normalsize
where both rows are exact and the left map
is surjective by Proposition \ref{Bsur}.
Therefore the statement follows from the snake lemma.
\end{proof}
\begin{thm}\upshape\label{mainGC}
Let $R$ be 
a local ring of a smooth scheme over the spectrum of a discrete valuation ring
of mixed characteristic $(0, p)$. Let $n\geq 0$ be an integer.
Suppose that 
$R$ contains $p$-th roots of unity.
Then the sequence
\begin{align*}
0\to  
&\HO^{n+r}_{\et}(
R, \mathfrak{T}_{1}(n)
)
\to
\displaystyle\bigoplus_{x\in \spec(R)^{(0)}}
\HO^{n+r}_{x}(
R_{\et},
\mathfrak{T}_{1}(n)
)
\\
\to
&\displaystyle\bigoplus_{x\in \spec(R)^{(1)}}
\HO^{n+r+1}_{x}(
R_{\et},
\mathfrak{T}_{1}(n)
)
\to
\cdots
\end{align*}
is exact for $r\geq 1$.
\end{thm}
\begin{proof}\upshape
Let the notations be the same as in Lemma \ref{Surrp}.
Since $R$ contains $p$-th roots of unity,
we have an isomorphism
\begin{equation*}
\HO_{\et}^{n+1}(R, j_{!}\mu_{p}^{\otimes n})   
\simeq
\HO_{\et}^{n+1}(R, 
\mathfrak{T}_{1}(n-k)
)
\end{equation*}
for $1\leq k\leq n$ by Proposition \ref{gwh}.
If the homomorphism 
\begin{equation}\label{surri}
\HO^{n}_{\et}(
R, 
\mathfrak{T}_{1}(n)
)
\to
\HO^{n}_{\et}
(
R/(\pi), 
\iota^{*}\mathfrak{T}_{1}(n)
)
\end{equation}
is surjective,
the homomorphism 
\begin{equation*}
\HO^{n+1}_{\et}(
R,
j_{!}\mu_{p}^{\otimes n}
)    
\to
\HO^{n+1}_{\et}(
R,
\mathfrak{T}_{1}(n)
) 
\end{equation*}
is  injective and
the homomorphism
\begin{equation*}
\HO^{n+k}_{\et}(R, \mathfrak{T}_{1}(n))
\to
\HO^{n+k}_{\et}(k(R), \mathfrak{T}_{1}(n))
\end{equation*}
is injective 
for $k\geq 2$ by \cite[p.51, Theorem 4.6]{Sak} and Proposition \ref{gwh}.
Then the statement follows from
Proposition \ref{Asin}.
So we prove that the homomorphism (\ref{surri}) is surjective.

We consider the commutative diagram
%
\small
\begin{equation}\label{comt}
\xymatrix{
\HO^{n}_{\et}(
R, \tau_{\leq n-1}\mathfrak{T}_{1}(n)
)
\ar[r]\ar[d]
&
\HO^{n}_{\et}(
R, \mathfrak{T}_{1}(n)
)
\ar[d]
\\
\HO^{n}_{\et}(
R/(\pi), \tau_{\leq n-1}\iota^{*}\mathfrak{T}_{1}(n)
)
\ar[r]
&
\HO^{n}_{\et}(
R/(\pi), 
\iota^{*}\mathfrak{T}_{1}(n)
)
\ar[r]
&
\HO^{n}_{\et}(
R/(\pi), 
\tau_{\geq n}\iota^{*}\mathfrak{T}_{1}(n)
)
}    
\end{equation}
\normalsize
where the lower sequence is exact.
Since $R$ contains $p$-th roots of unity,
we have quasi-isomorphisms
\begin{equation*}
\tau_{\leq n-1}\mathfrak{T}_{1}(n)
\simeq
\tau_{\leq n-1}Rj_{*}\mu_{p}^{\otimes n}
\simeq
\tau_{\leq n-1}Rj_{*}\mu_{p}^{\otimes n-1}
\end{equation*}
and so the left map in the diagram (\ref{comt})
is surjective by Lemma \ref{Surrp}. 
Moreover we have an isomorphism
\begin{equation*}
\HO^{0}_{\et}(
R/(\pi),
\mathcal{H}^{n}(
\iota^{*}\mathfrak{T}_{1}(n)
)
\to
\HO^{n}_{\et}(
R/(\pi), 
\tau_{\geq n}\iota^{*}\mathfrak{T}_{1}(n)
)    
).
\end{equation*}
Hence it suffices to show that
the composite
\begin{equation}\label{comp}
\HO^{n}_{\et}(
R, \mathfrak{T}_{1}(n)
)    
\to
\HO^{n}_{\et}(
R/(\pi), 
\iota^{*}\mathfrak{T}_{1}(n)
) 
\to 
\HO^{0}_{\et}(
R/(\pi), 
\mathcal{H}^{n}(\iota^{*}\mathfrak{T}_{1}(n))
) 
\end{equation}
is surjective. 
By \cite[p.55, Lemma 5.2]{Sak} and 
Theorem \ref{CUV}, 
we have
\begin{equation*}
\HO^{s}_{\et}(
R/(\pi), 
\operatorname{gr}^{q}_{U/V}M^{n}_{1}
)    
=
\HO^{s}_{\et}(
R/(\pi), 
\operatorname{gr}^{q}_{V/U}M^{n}_{1}
)    
=
0
\end{equation*}
for $s>0$, $q>0$ and
\begin{equation*}
U^{q}M^{n}_{1}
=
V^{q}M^{n}_{1}
=
0
\end{equation*}
for sufficiently large $q$.
Here the notations 
$U^{q}M^{n}_{1}$,
$V^{q}M^{n}_{1}$,
$\operatorname{gr}^{q}_{U/V}M^{n}_{1}$
and
$\operatorname{gr}^{q}_{V/U}M^{n}_{1}$
are defined in \S \ref{RVT}.
So we have
\begin{equation*}
\HO^{s}_{\et}(
R/(\pi), 
U^{q}M^{n}_{1}
)    
=
\HO^{s}_{\et}(
R/(\pi), 
V^{q}M^{n}_{1}
)    
=
0
\end{equation*}
for $s>0$ and $q>0$.
Hence the sequences
\small
\begin{align*}
&0\to
\HO^{0}_{\et}(R/(\pi), 
U^{1}M^{n}_{1})
\to
\HO^{0}_{\et}(R/(\pi), 
\iota^{*}\mathcal{H}^{n}(\mathfrak{T}_{1}(n)))
\to
\HO^{0}_{\et}(R/(\pi), \Omega^{n}_{R/(\pi), \log})
\to 0
\\
&0\to
\HO^{0}_{\et}(R/(\pi), V^{q}M^{n}_{1})
\to
\HO^{0}_{\et}(R/(\pi), U^{q}M^{n}_{1})
\to
\HO^{0}_{\et}(R/(\pi), \operatorname{gr}^{q}_{U/V}M^{n}_{1})
\to 0
\\
&0\to
\HO^{0}_{\et}(R/(\pi), U^{q+1}M^{n}_{1})
\to
\HO^{0}_{\et}(R/(\pi), V^{q}M^{n}_{1})
\to
\HO^{0}_{\et}(R/(\pi), \operatorname{gr}^{q}_{V/U}M^{n}_{1})
\to 0
\end{align*}
\normalsize
are exact for $q>0$. Here $\Omega^{n}_{R/(\pi), \log}$ denotes
$W_{1}\Omega^{n}_{\spec(R/(\pi)), \log}$.
Moreover, there are surjective maps
\begin{align*}
\operatorname{K}^{M}_{n}(R/(\pi))/p
&\xrightarrow{\simeq}   
\HO^{0}_{\et}(
R/(\pi), 
\Omega^{n}_{R/(\pi), \log}
)  \\
y_{1}\otimes\cdots\otimes y_{n}
&\mapsto
\frac{dy_{1}}{y_{1}}
\wedge\cdots\wedge
\frac{dy_{n}}{y_{n}}
\end{align*}
and
\begin{align*}
R/(\pi)\otimes((R/\pi)^{*})^{\otimes n}
&\to
\Omega_{R/(\pi)}^{n}
\\
x\otimes y_{1}\otimes\cdots y_{n}
&\mapsto
x\frac{dy_{1}}{y_{1}}\wedge\cdots
\wedge
\frac{dy_{n}}{y_{n}}
\end{align*}
by \cite[Proposition 3.1]{G-L} and \cite[p.122, Lemma (4.2)]{B-K}.
Here $K^{M}_{n}(R/(\pi))$ is the $n$-th Milnor $K$-group
of $R/(\pi)$.
Therefore the composite (\ref{comp}) is surjective by 
Theorem \ref{CUV}, Theorem \ref{CFU}
and \cite[p.55, Lemma 5.2]{Sak}. 
This completes the proof.
\end{proof}
\section{The Gersten resolution for $\mathbb{P}^{m}_{A}$ and $\mathbb{A}^{m}_{A}$
}\label{SP1h}

Let $A$ be a regular local ring
with $\operatorname{dim}(A)\leq 1$.
Suppose that $\operatorname{char}(A)$
is $p>0$ or $(0, p)$.
Let $f: Z\to X$ be a 
morphism of essentially smooth schemes over $\spec(A)$. 
Let $r$ be a positive integer.
In the case where $A$ is mixed characteristic, there is a morphism
\begin{equation}\label{ipul}
f^{*}\left(
\mathbb{Z}/p^{r}(n)_{\et}^{X}
\right)
\to
\mathbb{Z}/p^{r}(n)_{\et}^{Z}
\end{equation}
by \cite[p.538, Proposition 4.2.8]{SaP}. 
So we can define a morphism
\begin{equation}\label{adipul}
f^{*}:  
\mathbb{Z}/p^{r}(n)^{X}_{\et}
\to
Rf_{*}\left(
\mathbb{Z}/p^{r}(n)_{\et}^{Z}
\right)
\end{equation}
by the adjunction map 
\begin{math}
\mathbb{Z}/p^{r}(n)^{X}_{\et}
\to
Rf_{*}f^{*}\left(
\mathbb{Z}/p^{r}(n)_{\et}^{X}
\right)
\end{math}
and the morphism (\ref{ipul}).
In the case where $\operatorname{char}(A)=p>0$, we have the homomorphism
$f^{*}$ by \cite[p.732, Theorem 3.5.1]{SaL}.

Let $\iota: Z\to X$ be a closed immersion
of codimension $1$ between essentially smooth schemes 
over $\spec(A)$.
Suppose that
\begin{math}
\operatorname{char}(Z)
=
\operatorname{char}(X).
\end{math}
Then we have the Gysin morphism
\begin{equation*}
\operatorname{Gys}_{\iota}^{n}:
\mathbb{Z}/p^{r}(n-1)^{Z}_{\et}[-2]
\to
R\iota^{!}\mathbb{Z}/p^{r}(n)_{\et}^{X}
\end{equation*}
by \cite[p.527, Definition 2.2.1]{SaP}, \cite[p.547, Theorem 6.1.3]{SaP}
and \cite[p.48, (3.5.18)]{Gr}.
Moreover, we have the map
\begin{equation*}
\iota_{*}:
\iota_{*}
\left(
\mathbb{Z}/p^{r}(n-1)^{Z}_{\et}
\right)[-2]
\to
\mathbb{Z}/p^{r}(n)^{X}_{\et}
\end{equation*}
which is the left adjunct of the Gysin morphism
$\operatorname{Gys}_{i}^{n}$.

Then we have the followings:

\begin{prop}\upshape\label{GyPr}
Let the notations be the same as above. Then we have
\begin{equation*}
\iota_{*}\left(
\iota^{*}(x)\cup z
\right)
=
x\cup \iota_{*}z
\end{equation*}
for 
$x\in \mathbb{Z}/p^{r}(n)_{\et}^{X}$
and
$z\in \iota_{*}(\mathbb{Z}/p^{r}(m)^{Z}_{\et})[-2]$. 
\end{prop}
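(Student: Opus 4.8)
The plan is to read the asserted identity as the commutativity of a diagram in the derived category of étale sheaves on $X$, and to deduce it from two ingredients: the projection-formula isomorphism of the six-functor formalism for the closed immersion $i$, and the multiplicativity of the Gysin morphism $\operatorname{Gys}_i^n$ that is built into its construction in \cite{SaP}. First I would recall that $i_{*}$ is by definition the left adjunct of $\operatorname{Gys}_i^{m+1}$ under the adjunction $(i_{*}, Ri^{!})$, so that pushing forward a weight-$m$ class on $Z$ raises the weight by $1$ and that $x\cup i_{*}z$ and $i_{*}(i^{*}(x)\cup z)$ both live in $\mathbb{Z}/p^{r}(n+m+1)_{\et}^{X}$. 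For a closed immersion the six-functor formalism supplies a canonical isomorphism $i_{*}(i^{*}\mathcal{F}\otimes^{L}\mathcal{G})\simeq \mathcal{F}\otimes^{L}i_{*}\mathcal{G}$, natural in $\mathcal{F}\in D(X_{\et})$ and $\mathcal{G}\in D(Z_{\et})$ (cf. \cite{M}); applying this with $\mathcal{F}=\mathbb{Z}/p^{r}(n)_{\et}^{X}$ and $\mathcal{G}=\mathbb{Z}/p^{r}(m)_{\et}^{Z}[-2]$ identifies the left-hand side (cup product formed on $Z$, then $i_{*}$) with a class transported through this projection isomorphism.

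Thus the statement reduces to the commutativity of the square

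\begin{equation*}
\xymatrix@C=14pt{
i_{*}(i^{*}\mathbb{Z}/p^{r}(n)\otimes^{L}\mathbb{Z}/p^{r}(m)^{Z}[-2])
\ar[rr]^-{i_{*}(\cup_{Z})}\ar[d]_-{\simeq}
&&
i_{*}(\mathbb{Z}/p^{r}(n+m)^{Z}[-2])
\ar[d]^-{i_{*}}
\\
\mathbb{Z}/p^{r}(n)^{X}\otimes^{L}i_{*}(\mathbb{Z}/p^{r}(m)^{Z}[-2])
\ar[r]^-{1\otimes i_{*}}
&
\mathbb{Z}/p^{r}(n)^{X}\otimes^{L}\mathbb{Z}/p^{r}(m+1)^{X}
\ar[r]^-{\cup_{X}}
&
\mathbb{Z}/p^{r}(n+m+1)^{X},
}
\end{equation*}

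where the horizontal maps are induced by the cup products on $Z$ and on $X$ (cf. \cite[p.538, Proposition 4.2.6]{SaP}), the left vertical arrow is the projection isomorphism, and the right vertical arrow is the Gysin pushforward $i_{*}$. The commutativity of this square is precisely the assertion that $i_{*}$ (equivalently $\operatorname{Gys}_i^{\bullet}$) is a morphism of modules over the graded multiplicative object $\bigoplus_{n}\mathbb{Z}/p^{r}(n)_{\et}^{X}$, where the $i_{*}$-target is regarded as a module via the cup product on $X$ and the pullback $i^{*}$ is the ring map of \cite[p.538, Proposition 4.2.8]{SaP}.

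I expect the last step to be the main obstacle: verifying that the Gysin morphism respects the product structure requires unwinding its definition from \cite[p.527, Definition 2.2.1]{SaP} and \cite[p.547, Theorem 6.1.3]{SaP} (the fundamental class together with the purity/Gysin isomorphism) and checking that the two ways of forming the product around the purity triangle coincide. Under the hypothesis $\mathrm{char}(Z)=\mathrm{char}(X)$ this equicharacteristic condition guarantees that $\operatorname{Gys}_i^{n}$ is the genuine purity map in a single characteristic type, so the compatibility can be tested after restricting to the explicit descriptions of $\mathbb{Z}/p^{r}(n)_{\et}$ recalled in the introduction: via $\mu_{p^{r}}^{\otimes n}$ in the equicharacteristic-$0$ situation and via the logarithmic de Rham–Witt sheaves $\nu^{n}_{r}=W_{r}\Omega^{n}_{\log}$ in characteristic $p$. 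In each of these cases the projection formula for the associated residue/Gysin map is known and reduces to a direct symbol computation of the type recorded in Lemma \ref{Cpf}, which then establishes the commutativity of the square and hence the proposition.
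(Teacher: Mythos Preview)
Your overall architecture --- reduce the identity to a commutative square in the derived category, then verify it using the explicit models for $\mathbb{Z}/p^{r}(n)_{\et}$ --- is the same as the paper's.  The place where your argument breaks down is the last paragraph.  You read the hypothesis $\operatorname{char}(Z)=\operatorname{char}(X)$ as placing you in one of two \emph{equicharacteristic} situations (characteristic~$0$, where the sheaf is $\mu_{p^{r}}^{\otimes n}$, or characteristic~$p$, where it is $\nu_{r}^{n}$) and propose to check the square in each.  But when $A$ is a discrete valuation ring of mixed characteristic $(0,p)$ --- the principal case of the paper --- both $X$ and $Z$ are themselves of mixed characteristic, and on them $\mathbb{Z}/p^{r}(n)_{\et}$ is the $p$-adic Tate twist $\mathfrak{T}_{r}(n)$, not literally $\mu_{p^{r}}^{\otimes n}$ or $\nu_{r}^{n}$.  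The hypothesis $\operatorname{char}(Z)=\operatorname{char}(X)$ only rules out the case where $Z$ lies in the special fiber; it does not put you in a single characteristic.  Checking the square on the two fibers separately is also insufficient, since $\mathfrak{T}_{r}(n)$ is glued from these pieces via a nontrivial triangle, and compatibility with the gluing is exactly what has to be shown.

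The paper's route through this difficulty is what you are missing.  In the mixed-characteristic case it sets up the Cartesian square of open/closed immersions for $Z\hookrightarrow X$ over $\spec(A)$, and then checks the projection formula not on $\mathfrak{T}_{r}(n)$ directly but on the complexes $\tau_{\leq n}Rj_{*}\mu_{p^{r}}^{\otimes n}$ into which $\mathfrak{T}_{r}(n)$ maps (this is built into Sato's definition of the product, of $i^{*}$, and of $i_{*}$, \cite[Propositions 4.2.6, 4.2.8, Definition 4.4.5]{SaP}).  For those complexes the projection formula is Milne \cite[VI, Proposition 6.5(a)]{M}, and compatibility with the truncation maps then transports it back to $\mathfrak{T}_{r}(n)$.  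The characteristic-$p$ case is handled separately via the diagram for $\nu_{r}^{n}$ from the proof of \cite[Proposition 4.4.10]{SaP}.  Your appeal to Lemma~\ref{Cpf} is not apt here: that lemma concerns the compatibility of $\delta_{i}$ with cup products on a ring of characteristic~$p$, not a projection formula for a Gysin map.
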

\begin{proof}\upshape
Assume that $A$ is a discrete valuation 
ring of mixed characteristic
$(0, p)$.
Let 
\begin{equation*}
\xymatrix{
U^{\prime} \ar[r]^{j_{Z}}\ar[d]_{\iota_{U}}
& Z \ar[d]_{\iota}
& Y^{\prime} \ar[l]_{\iota_{Z}}\ar[d]^{\iota_{Y}}  \\
U \ar[r]_{j}
& X
& Y \ar[l]^{\iota^{\prime}}
}
\end{equation*}
be Cartesian where $j: U\to X$ 
is the inclusion of the generic fiber and
 $\iota^{\prime}: Y\to X$ is 
the inclusion of the closed fiber.
Then we have a commutative diagram
\begin{equation*}
\begin{CD}
\tau_{\leq n}Rj_{*}
\mu_{p^{r}}^{\otimes n}
\otimes^{\mathbb{L}}
\iota_{*}\tau_{\leq m}R(j_{Z})_{*}\mu_{p^{r}}^{\otimes m}[-2]
@>
{\operatorname{id}\otimes Rj_{*}(\iota_{U})_{*}}
>>
\tau_{\leq n}Rj_{*}
\mu_{p^{r}}^{\otimes n}
\otimes^{\mathbb{L}}
\tau_{\leq m}Rj_{*}
\mu_{p^{r}}^{\otimes m}  \\
@V \cup\circ (Rj_{*}(\iota_{U})^{*}\otimes\operatorname{id})
VV  @VV\cup V \\
\iota_{*}\tau_{\leq m+n}R(j_{Z})_{*}\mu_{p^{r}}^{\otimes m+n}[-2]
@>>{Rj_{*}(\iota_{U})_{*}}>
\tau_{\leq m+n}Rj_{*}
\mu_{p^{r}}^{\otimes m+n}
\end{CD}
\end{equation*}
by \cite[p.250, VI, Proposition 6.5 (a)]{M}.
Although 
$k(A)$ is a separable closed field
in \cite[p.250, VI, Proposition 6.5 (a)]{M}, 
\cite[p.250, VI, Proposition 6.5 (a)]{M} holds without this assumption.
So we have a commutative diagram
\begin{equation*}
\begin{CD}
\mathfrak{T}_{r}(n)_{X}
\otimes^{\mathbb{L}}
\iota_{*}\mathfrak{T}_{r}(m)_{Z}[-2]
@>
{\operatorname{id}\otimes \iota_{*}}
>>
\mathfrak{T}_{r}(m)_{X}
\otimes^{\mathbb{L}}
\mathfrak{T}_{r}(n)_{X}  \\
@V\cup\circ(\iota^{*}\otimes\operatorname{id})VV  @VV\cup V \\
\iota_{*}\mathfrak{T}_{r}(m+n)_{Z}[-2]
@>>{\iota_{*}}>
\mathfrak{T}_{r}(m+n)_{X}
\end{CD}
\end{equation*}
by definitions of Product (cf. \cite[p.538, Proposition 4.2.6]{SaP}), 
$\iota^{*}$ (cf. \cite[p.538, Proposition 4.2.8]{SaP}) and
$\iota_{*}$ (cf. \cite[p.540, Definition 4.4.5]{SaP}).
Hence the statement is true
in the case where
$A$ is mixed characteristic.

Assume that $\operatorname{char}(A)=p>0$.
Then we have a commutative diagram
\begin{equation*}
\begin{CD}
\nu_{r}^{n}[-n]
\otimes (\iota_{Y})_{*}\nu_{r}^{m}[-m-2] 
@>\operatorname{id}[-n]\otimes(\iota_{Y})_{*}[-m]>>
\nu_{r}^{n}[-n]\otimes \nu_{r}^{m}[-m]\\
@V{\cup\circ((\iota_{Y})^{*}\otimes\operatorname{id})}VV 
@VV{\cup}V \\
(\iota_{Y})_{*}\nu_{r}^{n+m}[-(n+m)-2]
@>>{(\iota_{Y})_{*}[-(n+m)]}>
\nu_{r}^{n+m}[-(n+m)]
\end{CD}
\end{equation*}
by 
\cite[p.542, The proof of Proposition 4.4.10]{SaP}. 
Hence the statement is true
in the case where
$\operatorname{char}(A)=p>0$. This completes the proof.

\end{proof}
\begin{cor}\upshape\label{ProjC}
Let the notations be the same as above. Then the diagram
\footnotesize
\begin{equation*}
\xymatrix{
\HO^{s}_{\et}(
X, 
\mathbb{Z}/p^{r}(n)
)\otimes
\HO^{t-2}_{\et}
\left(
Z, 
\mathbb{Z}/p^{r}(m-1)
\right)
\ar[r]^-{\operatorname{id}\otimes \iota_{*}}
\ar[d]_-{\cup\circ(\iota^{*}\otimes\operatorname{id})}
&
\HO^{s}_{\et}(
X, 
\mathbb{Z}/p^{r}(n)
)\otimes
\HO^{t}_{\et}
\left(
X, 
\mathbb{Z}/p^{r}(m)
\right)\ar[d]^-{\cup}   \\
\HO^{s+t-2}_{\et}
(Z,
\mathbb{Z}/p^{r}(n+m-1)
)\ar[r]_-{\iota_{*}}
&
\HO^{s+t}_{\et}(
X,
\mathbb{Z}/p^{r}(n+m)
)
}    
\end{equation*}
\normalsize
is commutative.
In particular, we have
\begin{equation*}
\iota_{*}(\iota^{*}(x))
=x\cup \iota_{*}(1)
\end{equation*}
for $x\in\HO^{s}_{\et}(X, \mathbb{Z}/p^{r}(n))$.
\end{cor}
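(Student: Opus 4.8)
The plan is to obtain the cohomological statement by applying the hypercohomology functor $\HO^{*}_{\et}(X,-)$ to the complex-level projection formula of Proposition \ref{GyPr}. Recall that all three operations appearing in the square are induced by morphisms of complexes in the derived category of \'{e}tale sheaves on $X$: the cup product $\cup$ comes from the product $\mathbb{Z}/p^{r}(n)^{X}_{\et}\otimes^{\mathbb{L}}\mathbb{Z}/p^{r}(m)^{X}_{\et}\to\mathbb{Z}/p^{r}(n+m)^{X}_{\et}$ of \cite[p.538, Proposition 4.2.6]{SaP} together with the external pairing on hypercohomology; the Gysin pushforward $i_{*}$ comes from the left adjunct $i_{*}\left(\mathbb{Z}/p^{r}(m-1)^{Z}_{\et}\right)[-2]\to\mathbb{Z}/p^{r}(m)^{X}_{\et}$ of $\operatorname{Gys}_{i}^{m}$; and the pullback $i^{*}$ comes from the morphism (\ref{adipul}) for $f=i$.

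First I would rewrite the diagram as the identity of classes
\[
i_{*}\left(i^{*}(x)\cup w\right)=x\cup i_{*}(w)
\]
in $\HO^{s+t}_{\et}(X,\mathbb{Z}/p^{r}(n+m))$, for arbitrary $x\in\HO^{s}_{\et}(X,\mathbb{Z}/p^{r}(n))$ and $w\in\HO^{t-2}_{\et}(Z,\mathbb{Z}/p^{r}(m-1))$. Proposition \ref{GyPr} asserts precisely that the corresponding square of morphisms of complexes commutes in the derived category. Since each of the maps $\cup$, $i_{*}$ and $i^{*}$ on cohomology is, by definition, induced by composing with exactly the complex-level morphisms appearing in that square, applying $\HO^{s+t}_{\et}(X,-)$ to the square and evaluating on the external product of the classes $x$ and $w$ produces the two composites of the corollary's diagram. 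Their equality is thus a direct consequence of the commutativity established in Proposition \ref{GyPr}, so the square commutes.

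For the final assertion I would specialize to $m=1$ and $t=2$, and take $w=1\in\HO^{0}_{\et}(Z,\mathbb{Z}/p^{r}(0))$. Then $i_{*}(w)=i_{*}(1)\in\HO^{2}_{\et}(X,\mathbb{Z}/p^{r}(1))$ and, since $i^{*}(x)\cup 1=i^{*}(x)$, the general identity collapses to $i_{*}(i^{*}(x))=x\cup i_{*}(1)$ for all $x\in\HO^{s}_{\et}(X,\mathbb{Z}/p^{r}(n))$, as required.

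The step needing the most care---though not a genuine obstacle---is the bookkeeping matching the shift $[-2]$ and the weight drop by $1$ in Proposition \ref{GyPr} with the degree increase by $2$ and weight increase by $1$ of the Gysin pushforward on cohomology, together with the verification that the hypercohomology external product defining $\cup$ is compatible with the derived tensor product $\otimes^{\mathbb{L}}$ used in the proposition. These are standard compatibilities of hypercohomology pairings, so no input beyond Proposition \ref{GyPr} is required.
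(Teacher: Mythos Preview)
Your proposal is correct and follows essentially the same approach as the paper: both derive the cohomological projection formula by passing Proposition~\ref{GyPr} through hypercohomology, and both obtain the special case $i_{*}(i^{*}(x))=x\cup i_{*}(1)$ by setting $w=1$. The only difference is presentational: the paper writes out explicitly the three intermediate commutative squares that encode the compatibility of the external cup product on $\HO^{*}_{\et}(X,-)$ with the derived-category morphisms $\operatorname{id}\otimes i_{*}$, $i^{*}\otimes\operatorname{id}$, and the product map, whereas you bundle these into the phrase ``standard compatibilities of hypercohomology pairings.''
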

\begin{proof}\upshape
By the property of the cup product,
the diagrams
\footnotesize
\begin{equation*}
\xymatrix@C=10pt{
\HO^{s}_{\et}
(X, 
\mathbb{Z}/p^{r}(n)
)\otimes 
\HO^{t}_{\et}
(X,
\iota_{*}\mathbb{Z}/p^{r}(m-1)[-2]
)\ar[r]^-{\cup}\ar[d]_{\operatorname{id}\otimes \iota_{*}}
&
\HO^{s+t}_{\et}
\left(
X, 
\mathbb{Z}/p^{r}(n)
\otimes
\iota_{*}\mathbb{Z}/p^{r}(m-1)[-2]
\right) 
\ar[d]
\\
\HO^{s}_{\et}
\left(
X,
\mathbb{Z}/p^{r}(n)
\right)\otimes
\HO^{t}_{\et}(
X,
\mathbb{Z}/p^{r}(m)
)\ar[r]_-{\cup}
&
\HO^{s+t}_{\et}(
X,
\mathbb{Z}/p^{r}(n+m)
)
}  
\end{equation*}
\normalsize
and
\footnotesize
\begin{equation*}
\xymatrix@C=10pt{
\HO^{s}_{\et}(
X, \mathbb{Z}/p^{r}(n)
)
\otimes
\HO^{t}_{\et}(
X, 
\iota_{*}\mathbb{Z}/p^{r}(m-1)[-2]
)\ar[r]^-{\cup}\ar[d]_-{\iota^{*}\otimes\operatorname{id}}
&
\HO^{s+t}_{\et}(
X, 
\mathbb{Z}/p^{r}(n)\otimes
\iota_{*}\mathbb{Z}/p^{r}(m-1)[-2]
)\ar[d]
\\
\HO^{s}_{\et}(
X, 
\iota_{*}\mathbb{Z}/p^{r}(n)
)
\otimes
\HO^{t}_{\et}(
X, 
\iota_{*}\mathbb{Z}/p^{r}(m)[-2]
)\ar[r]_-{\cup}
&
\HO^{s+t}_{\et}(
X,
\iota_{*}\mathbb{Z}/p^{r}(n+m)[-2]
)
}    
\end{equation*}
\normalsize
are commutative. Moreover, the diagram
\footnotesize
\begin{equation*}
\xymatrix{
\HO^{s+t}_{\et}
(
X, 
\mathbb{Z}/p^{r}(n)
\otimes
\iota_{*}\mathbb{Z}/p^{r}(m-1)[-2]
) 
\ar[d]_-{\operatorname{id}\otimes \iota_{*}}
\ar[r]^-{\iota^{*}\otimes\operatorname{id}} 
&
\HO^{s+t}_{\et}
(X,
\iota_{*}\mathbb{Z}/p^{r}(n)
\otimes
\iota_{*}\mathbb{Z}/p^{r}(m-1)[-2]
)
\ar[d]^{\iota_{*}\circ\cup}
\\
\HO^{s+t}_{\et}
(
X,
\mathbb{Z}/p^{r}(n)
\otimes
\mathbb{Z}/p^{r}(m)
)\ar[r]
&
\HO^{s+t}_{\et}
(X, 
\mathbb{Z}/p^{r}(n+m)
)
}  
\end{equation*}
\normalsize
is commutative by Proposition \ref{GyPr}. 
Hence the statement follows.
\end{proof}
\begin{lem}\upshape\label{GyC}
$A$ be a regular local ring
with $\operatorname{dim}(A)\leq 1$.
Suppose that $\operatorname{char}(A)$
is $p>0$ or $(0, p)$.
Let $\mathbb{P}_{A}^{m-1}$ be a hyperplane in $\mathbb{P}_{A}^{m}$
and
$i_{m}: \mathbb{P}_{A}^{m-1}\to \mathbb{P}_{A}^{m}$ 
the corresponding closed immersion. 
Let $\mathcal{O}_{\mathbb{P}_{A}^{m}}(1)$ be the tautological invertible
sheaf on $\mathbb{P}_{A}^{m}$
and
$\xi\in \HO^{2}_{\et}(\mathbb{P}^{m}_{A}, \mathbb{Z}/p^{r}(1))$
be the value of the first class
\begin{math}
\operatorname{c}^{1}(\mathcal{O}_{\mathbb{P}_{A}^{m}}(1))
\in 
\HO^{1}_{\et}(\mathbb{P}^{m}_{A}, \mathbb{G}_{m})
\end{math}
under the map associated with the 
distinguished triangle
\begin{equation*}
\cdots\to
\mathbb{Z}(1)_{\et}
\xrightarrow{\times p^{r}}
\mathbb{Z}(1)_{\et}
\to
\mathbb{Z}/p^{r}(1)_{\et}
\to
\cdots
\end{equation*}
%
and an isomorphism
\begin{equation*}
\mathbb{Z}(1)_{\et}
\simeq
\mathbb{G}_{m}[-1]    
\end{equation*}
(cf. \cite[Lemma 11.2]{L}). Then we have
\begin{equation}\label{xtx}
(i_{m})^{*}(\xi)=\xi
\end{equation}
and the image of the morphism
\begin{equation}\label{Gyi}
(i_{m})_{*}:
\mathbb{Z}/p^{r}
\to
\HO^{2}_{\et}(
\mathbb{P}_{A}^{m}, \mathbb{Z}/p^{r}(1)
)
\end{equation}
is generated by $\xi$.
\end{lem}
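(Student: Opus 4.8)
The plan is to prove the two assertions separately and to deduce the statement about the image from the single identity $(i_{m})_{*}(1)=\xi$. Indeed, the source of the Gysin map (\ref{Gyi}) is $\HO^{0}_{\et}(\mathbb{P}^{m-1}_{A}, \mathbb{Z}/p^{r})=\mathbb{Z}/p^{r}$ (projective space over the local ring $A$ is connected), which is cyclic and generated by $1$; since $(i_{m})_{*}$ is a group homomorphism, its image is generated by $(i_{m})_{*}(1)$, so once $(i_{m})_{*}(1)=\xi$ is known, the image is exactly $\langle\xi\rangle$.

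First I would prove (\ref{xtx}). The key geometric input is that the restriction of the tautological sheaf to a hyperplane is again tautological, $(i_{m})^{*}\mathcal{O}_{\mathbb{P}^{m}_{A}}(1)\simeq\mathcal{O}_{\mathbb{P}^{m-1}_{A}}(1)$. Because the first class $\operatorname{c}^{1}$ is functorial for pullback of invertible sheaves, this gives $(i_{m})^{*}\operatorname{c}^{1}(\mathcal{O}_{\mathbb{P}^{m}_{A}}(1))=\operatorname{c}^{1}(\mathcal{O}_{\mathbb{P}^{m-1}_{A}}(1))$. The class $\xi$ is manufactured from $\operatorname{c}^{1}$ by the connecting map of the displayed distinguished triangle together with $\mathbb{Z}(1)_{\et}\simeq\mathbb{G}_{m}[-1]$; both the triangle and the pullback morphism (\ref{adipul}) are natural in the scheme, so this connecting map commutes with $(i_{m})^{*}$. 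Chaining the two compatibilities yields $(i_{m})^{*}(\xi)=\xi$, where the right-hand $\xi$ is the analogous class on $\mathbb{P}^{m-1}_{A}$.

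For the identity $(i_{m})_{*}(1)=\xi$, the underlying fact is that the hyperplane $\mathbb{P}^{m-1}_{A}$ is the zero scheme of a section of $\mathcal{O}_{\mathbb{P}^{m}_{A}}(1)$, so that $\mathcal{O}_{\mathbb{P}^{m}_{A}}(\mathbb{P}^{m-1}_{A})\simeq\mathcal{O}_{\mathbb{P}^{m}_{A}}(1)$, and the Gysin pushforward of $1$ along a smooth effective divisor should agree with the first class of the associated invertible sheaf. Concretely I would unwind the left adjunct $i_{*}$ of the Gysin morphism through its definition (\cite[p.540, Definition 4.4.5]{SaP} in the mixed-characteristic case, \cite[p.542, The proof of Proposition 4.4.10]{SaP} in the positive-characteristic case) and compare it with the Kummer-theoretic description of $\xi$. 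The natural reduction is to the generic and closed fibers of $\mathbb{P}^{m}_{A}$: over the generic fiber the coefficients become $\mu_{p^{r}}$ and $(i_{m})_{*}(1)=\operatorname{c}^{1}(\mathcal{O}(1))$ is the classical equality of the cycle class of a divisor with the first Chern class, while over the closed fiber the analogous identity holds for the logarithmic de Rham-Witt Gysin map; these are glued by the defining triangle of $\mathfrak{T}_{r}(1)$.

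The main obstacle is precisely this last identification, since it forces a comparison of two a priori different constructions --- the Gysin adjunct coming from $\mathfrak{T}_{r}$-theory and the Kummer boundary defining the first class --- in the $p$-adic mixed-characteristic setting, where the classical cycle-class formalism is not directly at hand. I expect the resolution to rest on the functoriality and base-change compatibilities of the Gysin morphism recorded in \cite{SaP}, reducing the verification to the generic fiber where it is classical, and I would need to confirm that no information is lost in passing to the fibers (or else treat the closed-fiber term through the logarithmic de Rham-Witt identity directly). Granting $(i_{m})_{*}(1)=\xi$, the claim on the image follows as in the first paragraph, completing the proof together with (\ref{xtx}).
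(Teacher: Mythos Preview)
Your treatment of (\ref{xtx}) matches the paper's: both use functoriality of $\operatorname{c}^{1}$ together with $(i_{m})^{*}\mathcal{O}_{\mathbb{P}^{m}_{A}}(1)\simeq\mathcal{O}_{\mathbb{P}^{m-1}_{A}}(1)$ and naturality of the Kummer boundary.

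For the image assertion, however, the paper takes a genuinely different route that sidesteps the compatibility you flag as the main obstacle. Rather than proving $(i_{m})_{*}(1)=\xi$ directly, the paper factors $(i_{m})_{*}$ through the commutative diagram
\[
\xymatrix{
\HO_{\mathbb{P}^{m-1}_{A}}^{2}\bigl((\mathbb{P}^{m}_{A})_{\Zar},\mathbb{Z}(1)\bigr)\ar[r]\ar[d] & \HO_{\mathbb{P}^{m-1}_{A}}^{2}\bigl((\mathbb{P}^{m}_{A})_{\Zar},\mathbb{Z}/p^{r}(1)\bigr)\ar[r]\ar[d] & \HO_{\mathbb{P}^{m-1}_{A}}^{2}\bigl((\mathbb{P}^{m}_{A})_{\et},\mathbb{Z}/p^{r}(1)\bigr)\ar[d]\\
\HO_{\Zar}^{2}\bigl(\mathbb{P}^{m}_{A},\mathbb{Z}(1)\bigr)\ar[r] & \HO_{\Zar}^{2}\bigl(\mathbb{P}^{m}_{A},\mathbb{Z}/p^{r}(1)\bigr)\ar[r] & \HO_{\et}^{2}\bigl(\mathbb{P}^{m}_{A},\mathbb{Z}/p^{r}(1)\bigr)
}
\]
and uses Geisser's results (\cite[Corollary 3.5, Corollary 4.4]{Ge}) plus the Zariski Gysin isomorphism coming from Beilinson--Lichtenbaum to show that the vertical maps on the left are isomorphisms, the top-left horizontal map is surjective, and the top-right horizontal map is an isomorphism. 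The upshot is that the image of $(i_{m})_{*}$ equals the image of the bottom composite, which starts from $\HO^{2}_{\Zar}(\mathbb{P}^{m}_{A},\mathbb{Z}(1))\simeq\operatorname{Pic}(\mathbb{P}^{m}_{A})=\langle\operatorname{c}^{1}(\mathcal{O}(1))\rangle$; hence the image is $\langle\xi\rangle$. This argument never compares the $\mathfrak{T}_{r}$-Gysin with the Kummer boundary on the nose --- it only needs the \emph{images} to agree, and that follows from the diagram chase. Your approach would yield the sharper statement $(i_{m})_{*}(1)=\xi$ (up to a sign convention), but the fibre-wise reduction you sketch requires an injectivity step to pass from fibres back to $\mathbb{P}^{m}_{A}$ and a genuine cycle-class compatibility for $\mathfrak{T}_{r}(1)$ that is not supplied here; the paper's Zariski/Picard detour avoids both issues at the cost of proving only what is actually needed downstream in Theorem~\ref{P1h}.
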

\begin{proof}\upshape
We have a commutative diagram
\begin{equation*}
\xymatrix{
(i_{m})^{*}(\mathbb{Z}(1)_{\et})
\ar[r]\ar[d]
&
(i_{m})^{*}(\mathbb{Z}/p^{r}(1)_{\et}) \ar[d]^{i^{*}}  
\\
\mathbb{Z}(1)_{\et}
\ar[r]
&
\mathbb{Z}/p^{r}(1)_{\et} 
}
\end{equation*}
where the left map is induced by the natural map
$(i_{m})^{*}(\mathbb{G}_{m})\to\mathbb{G}_{m}$.
Since $\operatorname{c}^{1}(\mathcal{O}_{\mathbb{P}_{A}^{m-1}}(1))$ 
is the image of
$\operatorname{c}^{1}(\mathcal{O}_{\mathbb{P}_{A}^{m}}(1))$ under the natural map
\begin{equation*}
\HO^{1}_{\et}(
\mathbb{P}^{m}_{A}, 
\mathbb{G}_{m}
)
\to
\HO^{1}_{\et}(
\mathbb{P}^{m-1}_{A}, 
\mathbb{G}_{m}
)
,
\end{equation*}
we have the equation (\ref{xtx}).

Next we prove that the image of (\ref{Gyi}) is generated
by $\xi$.
The diagram
\footnotesize
\begin{equation}\label{diaP}
\xymatrix{
\HO_{\mathbb{P}^{m-1}_{A}}^{2}
\left(
(\mathbb{P}^{m}_{A})_{\Zar}, 
\mathbb{Z}(1)
\right)\ar[r]
\ar[d]
& 
\HO_{\mathbb{P}^{m-1}_{A}}^{2}
\left(
(\mathbb{P}^{m}_{A})_{\Zar}, 
\mathbb{Z}/p^{r}(1)
\right)\ar[r]\ar[d]
&
\HO_{\mathbb{P}^{m-1}_{A}}^{2}
\left(
(\mathbb{P}^{m}_{A})_{\et}, 
\mathbb{Z}/p^{r}(1)
\right)\ar[d]   \\
\HO_{\Zar}^{2}
\left(
\mathbb{P}^{m}_{A}, 
\mathbb{Z}(1)
\right)\ar[r]
& 
\HO_{\Zar}^{2}
\left(
\mathbb{P}^{m}_{A}, 
\mathbb{Z}/p^{r}(1)
\right)\ar[r]
&
\HO_{\et}^{2}
\left(
\mathbb{P}^{m}_{A}, 
\mathbb{Z}/p^{r}(1)
\right)
}    
\end{equation}
\normalsize
is commutative.
By \cite[p.781, Corollary 3.5]{Ge} and 
\cite[p.786, Corollary 4.4]{Ge}, 
the maps
\begin{align}\label{nms}
\HO_{\mathbb{P}^{m-1}_{A}}^{s}
\left(
(\mathbb{P}^{m}_{A})_{\Zar}, 
\mathbb{Z}(1)
\right)
\to
\HO_{\Zar}^{s}
\left(
\mathbb{P}^{m}_{A}, 
\mathbb{Z}(1)
\right)
\end{align}
and
\begin{align*}
\HO_{\mathbb{P}^{m-1}_{A}}^{s}
\left(
(\mathbb{P}^{m}_{A})_{\Zar}, 
\mathbb{Z}/p^{r}(1)
\right)
\to
\HO_{\Zar}^{s}
\left(
\mathbb{P}^{m}_{A}, 
\mathbb{Z}/p^{r}(1)
\right)
\end{align*}
are isomorphisms for $s\geq 2$.
Moreover, we have
\begin{equation*}
\HO^{3}_{\Zar}(
\mathbb{P}_{A}^{m},
\mathbb{Z}(1))
=0
\end{equation*}
by (\ref{nms}) and \cite[p.780, (6)]{Ge}.
Hence the left and the middle maps in (\ref{diaP})
are isomorphisms and
the first map of the lower row in (\ref{diaP})
is surjective.
So the first map of the upper row in (\ref{diaP})
is surjective.

Let $i: Z\to X$ be a closed immersion of
codimension $1$
between essentially smooth schemes 
over $\spec(A)$.
Then we have
\begin{align*}
\mathcal{H}^{t}\left(
\mathbb{Z}/p^{r}(n)_{\Zar}^{Z}
\right)=0 
~~\textrm{and}~~
R^{t+1}i^{!}\mathbb{Z}/p^{r}(n)_{\Zar}^{X}
=0
\end{align*}
for $t\geq n+1$ by \cite[p.786, Corollary 4.4]{Ge} 
and \cite[p.780, (6)]{Ge}. 
So the Gysin isomorphism 
$\tau_{\leq n+1}\left(\operatorname{Gys}_{i}^{n}\right)$
(cf. \cite[p.547, Theorem 6.1.3]{SaP})   
induces an isomorphism
\begin{equation}\label{GyZp}
\tau_{\leq n+1}R\epsilon_{*}(\operatorname{Gys}_{i}^{n})
: \mathbb{Z}/p^{r}(n-1)_{\Zar}[-2]
\to
Ri^{!}\mathbb{Z}/p^{r}(n)_{\Zar}
\end{equation}
by \cite[p.776, Proposition 2.2.a)]{Ge}
and \cite[p.774, Theorem 1.2.2]{Ge}.
Hence the second map of the upper row is an isomorphism.
Since the morphism (\ref{Gyi}) is the composition of
the map which is induced by the Gysin map
and the right map in (\ref{diaP}),
the image of the morphism (\ref{Gyi})
equals the image of the composition 
of the maps of the lower row in (\ref{diaP}).
Moreover, the diagram
\begin{equation*}
\xymatrix{
\HO^{2}_{\Zar}
(
\mathbb{P}^{m}_{A},
\mathbb{Z}(1)
)\ar[r]
\ar[d]
&
\HO^{2}_{\Zar}
(
\mathbb{P}^{m}_{A},
\mathbb{Z}/p^{r}(1)
)
\ar[d]  \\
\HO^{2}_{\et}
(
\mathbb{P}^{m}_{A},
\mathbb{Z}(1)
)\ar[r]
&
\HO^{2}_{\et}
(
\mathbb{P}^{m}_{A},
\mathbb{Z}/p^{r}(1)
) 
}    
\end{equation*}
is commutative and 
the left map in the diagram is an isomorphism 
by \cite[p.774, Theorem 1.2.2]{Ge}. 
Since 
$\HO^{1}_{\et}(\mathbb{P}^{m}_{A}, \mathbb{G}_{m})$
is generated by
$\operatorname{c}^{1}(\mathcal{O}_{\mathbb{P}_{A}^{m}}(1))$,
the image of the morphism (\ref{Gyi})
is generated by $\xi$.
This completes the proof.
\end{proof}
\begin{thm}\upshape\label{P1h}
Let $A$ be a regular local ring
with $\operatorname{dim}(A)\leq 1$.
Suppose that $\operatorname{char}(A)$
is $p>0$ or $(0, p)$.
Then we have an isomorphism
\begin{equation*}
\HO_{\Zar}^{s}
\left(
\mathbb{P}^{m}_{A},
\tau_{\geq n+1}
R\epsilon_{*}\mathbb{Z}/p^{r}(n)
\right)
=
\bigoplus_{j=0}^{s^{\prime}}
\HO_{\et}^{s-2j}
(
A, 
\mathbb{Z}/p^{r}(n-j)
)
\end{equation*}
for $s\geq n+1$ and any $r>0$ where
$\epsilon: X_{\et}\to X_{\Zar}$ is the canonical map of sites
and $\epsilon_{*}$ is the forgetful functor.
Here $s^{\prime}=s-n-1$ if $s\leq n+m+1$ and
$s^{\prime}=m$ if $s>n+m+1$.
\end{thm}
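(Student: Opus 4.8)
The plan is to establish a projective bundle formula for the truncated complex $K:=\tau_{\geq n+1}R\epsilon_{*}\mathbb{Z}/p^{r}(n)$ by induction on $m$, driven by the localization (Gysin) sequence for a hyperplane. Write $X=\mathbb{P}^{m}_{A}$, let $i\colon\mathbb{P}^{m-1}_{A}\to X$ be the closed immersion of Lemma \ref{GyC}, let $j\colon\mathbb{A}^{m}_{A}=X\setminus\mathbb{P}^{m-1}_{A}\to X$ be the open complement, and let $q\colon X\to\spec(A)$ be the structure map. The localization triangle $i_{*}Ri^{!}K\to K\to Rj_{*}j^{*}K$ gives a long exact sequence relating $\HO^{s}_{\Zar}(X,K)$, the local term $\HO^{s}_{\Zar}(\mathbb{P}^{m-1}_{A},Ri^{!}K)$ and the open term $\HO^{s}_{\Zar}(\mathbb{A}^{m}_{A},j^{*}K)$; I would show this sequence is short exact and identify its outer terms with the $j\geq 1$ part and the $j=0$ part of the asserted direct sum.

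For the base case $m=0$, where $X=\spec(A)$, I would use the Beilinson--Lichtenbaum isomorphism $\tau_{\leq n}R\epsilon_{*}\mathbb{Z}/p^{r}(n)\simeq\mathbb{Z}/p^{r}(n)_{\Zar}$ (cf. \cite{V} and \cite[p.774, Theorem 1.2.2]{Ge}): the triangle $\mathbb{Z}/p^{r}(n)_{\Zar}\to R\epsilon_{*}\mathbb{Z}/p^{r}(n)\to K$ together with the vanishing $\HO^{s}_{\Zar}(A,\mathbb{Z}/p^{r}(n))=0$ for $s\geq n+1$ (from the Cousin resolution over the regular local ring $A$ of dimension $\leq 1$, cf. \cite{Ge}) yields $\HO^{s}_{\Zar}(A,K)\simeq\HO^{s}_{\et}(A,\mathbb{Z}/p^{r}(n))$ for $s\geq n+1$, which is the formula with $s'=0$. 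For the inductive step I first compute $Ri^{!}K$: applying $Ri^{!}$ to the same Beilinson--Lichtenbaum triangle and using the Gysin isomorphism \eqref{GyZp}, $Ri^{!}\mathbb{Z}/p^{r}(n)_{\Zar}\simeq\mathbb{Z}/p^{r}(n-1)_{\Zar}[-2]$, together with purity for the $p$-adic Tate twists to get $Ri^{!}R\epsilon_{*}\mathbb{Z}/p^{r}(n)\simeq R\epsilon_{*}\mathbb{Z}/p^{r}(n-1)[-2]$, I obtain $Ri^{!}K\simeq\tau_{\geq n}R\epsilon_{*}\mathbb{Z}/p^{r}(n-1)[-2]$.

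Thus the local term becomes $\HO^{s-2}_{\Zar}(\mathbb{P}^{m-1}_{A},\tau_{\geq n}R\epsilon_{*}\mathbb{Z}/p^{r}(n-1))$, to which the inductive hypothesis applies in weight $n-1$ and degree $s-2$; reindexing $j\mapsto j+1$ rewrites its value as $\bigoplus_{j=1}^{s'}\HO^{s-2j}_{\et}(A,\mathbb{Z}/p^{r}(n-j))$, and one checks that the boundary index $s'$ matches in both regimes $s\leq n+m+1$ and $s>n+m+1$. The open term is identified with the remaining $j=0$ summand $\HO^{s}_{\et}(A,\mathbb{Z}/p^{r}(n))$ via the companion homotopy-invariance statement for $\mathbb{A}^{m}_{A}$ and the fact that $\xi$ restricts to $0$ on $\mathbb{A}^{m}_{A}$. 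To split the sequence I would invoke the Chern class $\xi\in\HO^{2}_{\et}(X,\mathbb{Z}/p^{r}(1))$: by Lemma \ref{GyC} one has $i_{*}(1)=\xi$ and $i^{*}\xi=\xi$, and by the projection formula (Corollary \ref{ProjC}) $i_{*}(i^{*}(x))=x\cup\xi$, so $i_{*}$ sends $\xi^{j}\cup q^{*}(x)$ to $\xi^{j+1}\cup q^{*}(x)$ while $j^{*}$ annihilates every positive power of $\xi$; hence the assignments $x\mapsto\xi^{j}\cup q^{*}(x)$ assemble into the desired splitting, with injectivity of $i_{*}$ and the cokernel identification supplied by the inductive hypothesis.

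The hard part will be the two mixed-characteristic inputs. First, the identification $Ri^{!}K\simeq\tau_{\geq n}R\epsilon_{*}\mathbb{Z}/p^{r}(n-1)[-2]$ must reconcile Sato's purity for the $p$-adic Tate twists---purity with $p$-coefficients being precisely what fails naively in characteristic $p$---with both the truncation $\tau_{\geq n+1}$ and the functor $R\epsilon_{*}$, and one must verify that \eqref{GyZp} is compatible with these operations. Second, the affine computation $\HO^{s}_{\Zar}(\mathbb{A}^{m}_{A},K)\simeq\HO^{s}_{\et}(A,\mathbb{Z}/p^{r}(n))$ is delicate, since the closed fibre contributes logarithmic de Rham--Witt sheaves $\nu^{\bullet}_{r}$ that are not themselves homotopy invariant; the reduction to $\spec(A)$ therefore has to be routed through the same Gysin--localization formalism and the Chern-class splitting rather than asserted as plain homotopy invariance.
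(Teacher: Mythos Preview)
Your proposal takes a genuinely different route from the paper. The paper never touches the localization sequence for the hyperplane/affine-complement pair; instead it works with the distinguished triangle $\mathbb{Z}/p^{r}(n)_{\Zar}\to R\epsilon_{*}\mathbb{Z}/p^{r}(n)_{\et}\to K$ and shows directly that the map
\[
((i_{m})_{*},(f_{m})^{*})\colon \HO^{s-2}(\mathbb{P}^{m-1}_{A})\oplus\HO^{s}(A)\longrightarrow \HO^{s}(\mathbb{P}^{m}_{A})
\]
is an isomorphism simultaneously for \'etale cohomology (via Sato's projective bundle formula \cite[Theorem 4.1, Lemma 4.2]{SaR}, quoted as \eqref{PR}) and for Zariski motivic cohomology (via \eqref{GyZp} and \cite[Corollary 3.5]{Ge}, giving \eqref{bPZ}). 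The statement for $K$ then follows by taking cokernels in the comparison map, and the induction on $m$ closes. The pullback summand $(f_{m})^{*}$ replaces your open-affine term entirely, so the paper never has to compute $\HO^{s}_{\Zar}(\mathbb{A}^{m}_{A},K)$.

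This matters because the affine computation you flag as hard is not available as an input. There is no ``companion homotopy-invariance statement for $\mathbb{A}^{m}_{A}$'': the Remark at the end of \S\ref{SP1h} shows precisely that the $p$-adic \'etale Tate twists fail homotopy invariance in mixed characteristic, so $\HO^{s}_{\et}(\mathbb{A}^{m}_{A},\mathfrak{T}_{r}(n))\simeq\HO^{s}_{\et}(A,\mathfrak{T}_{r}(n))$ cannot be asserted a priori. One can in fact deduce this identification degree by degree, but only by combining Sato's projective bundle formula \eqref{PR} with the projection formula (Corollary \ref{ProjC}) and Lemma \ref{GyC} to see that $(i_{m})_{*}$ is split injective---which is exactly the paper's argument for \eqref{bP} and \eqref{bPS}. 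Your open-term step therefore presupposes the very decomposition the paper takes as its organizing principle; routing the proof through the localization triangle reintroduces the same ingredients in a more roundabout way. Your purity computation $Ri^{!}K\simeq\tau_{\geq n}R\epsilon_{*}\mathbb{Z}/p^{r}(n-1)[-2]$ is plausible (and would rest on the compatibility recorded in \eqref{GyZp}), but it too becomes unnecessary once one argues with $((i_{m})_{*},(f_{m})^{*})$ directly.
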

\begin{proof}\upshape
We have a distinguished triangle
\begin{equation*}
\cdots\to
\mathbb{Z}/p^{r}(n)_{\Zar}
\to
R\epsilon_{*}\mathbb{Z}/p^{r}(n)_{\et}
\to
\tau_{\geq n+1}R\epsilon_{*}\mathbb{Z}/p^{r}(n)_{\et}
\to \cdots
\end{equation*}
by \cite[p.774, Theorem 1.2.2]{Ge}. So it suffices to show that the sequence
\begin{equation}\label{Ze}
0\to   
\HO_{\Zar}^{s}(
\mathbb{P}_{A}^{m},
\mathbb{Z}/p^{r}(n)
)
\to
\HO_{\et}^{s}(
\mathbb{P}_{A}^{m},
\mathbb{Z}/p^{r}(n)
)
\to
\bigoplus_{j=0}^{s^{\prime}}
\HO_{\et}^{s-2j}
(
A, 
\mathbb{Z}/p^{r}(n-j)
)
\to
0 
\end{equation}
is exact. 
Let $f_{m}$ be the projection $\mathbb{P}^{m}_{A}\to\spec(A)$.
In order to show that the sequence (\ref{Ze})
is exact, we show that we have an isomorphism
\begin{equation}\label{bP}
\HO_{\et}^{s-2}(\mathbb{P}_{A}^{m-1}, 
\mathbb{Z}/p^{r}(n-1))
\oplus
\HO_{\et}^{s}(A, \mathbb{Z}/p^{r}(n))
\xrightarrow{((i_{m})_{*}, (f_{m})^{*})}
\HO_{\et}^{s}(\mathbb{P}_{A}^{m}, 
\mathbb{Z}/p^{r}(n))
\end{equation}
where $\mathbb{P}_{A}^{m-1}$ is a hyperplane in $\mathbb{P}_{A}^{m}$
and $i_{m}: \mathbb{P}_{A}^{m-1}\to\mathbb{P}_{A}^{m}$ 
is the corresponding closed immersion.
Let 
$\xi\in \HO^{1}_{\et}(\mathbb{P}^{m^{\prime}}_{A}, \mathbb{G}_{m})$ 
be the same as in Lemma \ref{GyC}.
For $m^{\prime}>0$, 
we have an isomorphism
\begin{equation}\label{PR}
\bigoplus_{j=0}^{m^{\prime}}\xi^{j}\cup - :
\bigoplus_{j=0}^{m^{\prime}}\mathbb{Z}/p^{r}(n-j)_{\et}
[-2j]
\xrightarrow[\sim]{}
R(f_{m^{\prime}})_{*}(
\mathbb{Z}/p^{r}(n)_{\et}
)
\end{equation}
by \cite[p.190, Theorem 4.1]{SaR} and \cite[p.191, Lemma 4.2]{SaR}.
Since 
\begin{equation*}
(f_{m-1})^{*}
=(i_{m})^{*}\circ (f_{m})^{*}    
\end{equation*}
by \cite[p.538, Proposition 4.2.8]{SaP}, 
we have
\begin{align*}
R(f_{m})_{*}(i_{m})_{*}\left(
\xi^{j}\cup (f_{m-1})^{*}(x)
\right)
&=
\xi^{j}\cup
R(f_{m})_{*}\left((i_{m})_{*}(i_{m})^{*}\right)
((f_{m})^{*}(x))\\
&=
\xi^{j}\cup
R(f_{m})_{*}\left(
\operatorname{id}\cup (i_{m})_{*}(1)
\right)
((f_{m})^{*}(x))
\end{align*}
for $x\in\mathbb{Z}/p^{r}(n-j)_{\et}[-2j]$
by Proposition \ref{GyPr}, Corollary \ref{ProjC} and Lemma \ref{GyC}.
By (\ref{PR}) for 
$m^{\prime}=m-1$, we have an isomorphism
\begin{align}\label{IA(m-1)}
&\left(
\bigoplus_{j=0}^{m-1}
\mathbb{Z}/p^{r}(n-1-j)_{\et}[-2j]
\right)[-2]
\oplus \mathbb{Z}/p^{r}(n)_{\et}  \nonumber \\
\xrightarrow{
\left(
(\bigoplus_{j=0}^{m-1}\xi^{j})[-2], 
\operatorname{id}
\right)} 
&R(f_{m-1})_{*}\mathbb{Z}/p^{r}(n-1)_{\et}[-2]
\oplus
\mathbb{Z}/p^{r}(n)_{\et}
\end{align}
and the composite of (\ref{IA(m-1)}) and
\begin{equation}\label{bPS}
R(f_{m-1})_{*}\mathbb{Z}/p^{r}(n-1)_{\et}[-2]
\oplus
\mathbb{Z}/p^{r}(n)_{\et}
\xrightarrow[]{(R(f_{m})_{*}(i_{m})_{*}, (f_{m})^{*})}
R(f_{m})_{*}\mathbb{Z}/p^{r}(n)_{\et}
\end{equation}
%
is an isomorphism
by the isomorphism (\ref{PR}) and Lemma \ref{GyC}. 
So the homomorphism (\ref{bPS}) is an isomorphism.
Hence the homomorphism (\ref{bP}) is an isomorphism.

On the other hand, 
we have an isomorphism
\begin{align}\label{bPZ}
\HO_{\Zar}^{s-2}(\mathbb{P}_{A}^{m-1}, 
\mathbb{Z}/p^{r}(n-1)
)
\oplus
\HO_{\Zar}^{s}(A, \mathbb{Z}/p^{r}(n))  \nonumber \\
\xrightarrow{
(
\tau_{\leq n+1}R\epsilon_{*}(i_{m})_{*}, 
\tau_{\leq n}R\epsilon_{*}(f_{m})^{*}
)
}
\HO_{\Zar}^{s}(\mathbb{P}_{A}^{m}, 
\mathbb{Z}/p^{r}(n)
)
\end{align}
by (\ref{GyZp}) and
\cite[p.781, Corollary 3.5]{Ge}.

Since we have the isomorphisms (\ref{bP}) and (\ref{bPZ}),
the homomorphism (\ref{Ze}) is an isomorphism 
by the induction on $m$.
\end{proof}
\begin{cor}\upshape\label{HBP1}
Let $A$ be a regular local ring with $\operatorname{dim}(A)\leq 1$.
Suppose that $\operatorname{char}(A)$ is $p>0$ or $(0, p)$.
Then we have an isomorphism
\begin{equation*}
\HB^{n+1}(
\mathbb{P}_{A}^{m}
)_{p^{r}} 
=
\HO_{\et}^{n+1}(A,
\mathbb{Z}/p^{r}(n)
)
\end{equation*}
for any integer $r>0$
where $\HB^{n+1}(X)$ 
is the $(n+1)$-th Brauer group of a smooth scheme $X$
(cf. \cite[Definition 3.1]{Sak}) and
$\HB^{n+1}(X)_{p^{r}}$
is the $p^{r}$-torsion subgroup of $\HB^{n+1}(X)$. 

Suppose that $\operatorname{char}(A)=p>0$.
Then
\begin{align*}
\HO_{\Zar}^{s}(
\mathbb{P}^{m}_{A},
R^{n+1}\epsilon_{*}
\mathbb{Z}/p^{r}(n)
)
=
\begin{cases}
\HO_{\et}^{n-s+1}(
A, \mathbb{Z}/p^{r}(n-s)
) 
& (0\leq s\leq m)  \\
0
& (s>m)
\end{cases}
\end{align*}
for any integer $r>0$ where 
$\epsilon: (\mathbb{P}^{m}_{A})_{\et}\to(\mathbb{P}^{m}_{A})_{\Zar}$ 
is the canonical map of sites and $\epsilon_{*}$
is the forgetful functor.
\end{cor}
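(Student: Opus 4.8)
The plan is to deduce both assertions from Theorem \ref{P1h} by identifying the generalized Brauer group, and more generally the Zariski cohomology of $R^{n+1}\epsilon_*\mathbb{Z}/p^r(n)$, with the hypercohomology of the truncated complex $\tau_{\geq n+1}R\epsilon_*\mathbb{Z}/p^r(n)$ computed there. First I would treat the $p^r$-torsion of $\HB^{n+1}$. By the definition of the generalized Brauer group (\cite[Definition 3.1]{Sak}) one has $\HB^{n+1}(X)=\Gamma(X, R^{n+2}\epsilon_*\mathbb{Z}(n)_{\et})$, so equation (\ref{HBZm}) of Lemma \ref{ADHB}, with $n$ replaced by $n+1$, gives $\HB^{n+1}(X)_{p^r}=\Gamma(X, R^{n+1}\epsilon_*\mathbb{Z}/p^r(n)_{\et})$. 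Since the Beilinson--Lichtenbaum isomorphism $\tau_{\leq n}R\epsilon_*\mathbb{Z}/p^r(n)\simeq \mathbb{Z}/p^r(n)_{\Zar}$ identifies $R^{n+1}\epsilon_*\mathbb{Z}/p^r(n)$ with the lowest nonzero cohomology sheaf $\mathcal{H}^{n+1}(\tau_{\geq n+1}R\epsilon_*\mathbb{Z}/p^r(n))$, the hypercohomology spectral sequence of the complex $\tau_{\geq n+1}R\epsilon_*\mathbb{Z}/p^r(n)$, which is concentrated in degrees $\geq n+1$, yields an edge isomorphism
\begin{equation*}
\HO^{n+1}_{\Zar}\!\left(X,\tau_{\geq n+1}R\epsilon_*\mathbb{Z}/p^r(n)\right)\simeq \Gamma\!\left(X, R^{n+1}\epsilon_*\mathbb{Z}/p^r(n)\right).
\end{equation*}
Taking $X=\mathbb{P}_A^m$ and applying Theorem \ref{P1h} with $s=n+1$, so that $s'=s-n-1=0$ and the direct sum collapses to its $j=0$ term, gives $\HB^{n+1}(\mathbb{P}_A^m)_{p^r}=\HO^{n+1}_{\et}(A,\mathbb{Z}/p^r(n))$; note that this argument is characteristic-free.

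For the second assertion I would exploit that in characteristic $p$ the complex $\tau_{\geq n+1}R\epsilon_*\mathbb{Z}/p^r(n)$ is concentrated in a single degree. Indeed $\mathbb{Z}/p^r(n)_{\et}\simeq \nu^n_r[-n]$, so $R^s\epsilon_*\mathbb{Z}/p^r(n)=R^{s-n}\epsilon_*\nu^n_r$, and from the exact sequence $0\to \nu^n_1\to \Omega^n_{\spec(-)}\to \Omega^n_{\spec(-)}/\mathfrak{B}^n\to 0$ (cf. \cite[p.576, Proposition 2.8]{Sh}), whose outer terms are quasi-coherent and hence acyclic for $\epsilon_*$, one gets $R^q\epsilon_*\nu^n_1=0$ for $q\geq 2$; a d\'{e}vissage in $r$ through the Verschiebung sequences for $W_\bullet\Omega^n_{\log}$ extends this to all $r$. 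Consequently $\tau_{\geq n+1}R\epsilon_*\mathbb{Z}/p^r(n)\simeq \left(R^{n+1}\epsilon_*\mathbb{Z}/p^r(n)\right)[-(n+1)]$, so
\begin{equation*}
\HO^{s}_{\Zar}\!\left(\mathbb{P}_A^m, R^{n+1}\epsilon_*\mathbb{Z}/p^r(n)\right)\simeq \HO^{s+n+1}_{\Zar}\!\left(\mathbb{P}_A^m,\tau_{\geq n+1}R\epsilon_*\mathbb{Z}/p^r(n)\right),
\end{equation*}
and Theorem \ref{P1h} in degree $s+n+1$, where for $0\leq s\leq m$ one has $s'=s$, identifies the right-hand side with $\bigoplus_{j=0}^{s}\HO^{s+n+1-2j}_{\et}(A,\mathbb{Z}/p^r(n-j))$.

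It then remains to see that only the diagonal term survives. In characteristic $p$ the $j$-th summand equals $\HO^{\,s+1-j}_{\et}(A,\nu^{n-j}_r)$; the same vanishing $R^{\geq 2}\epsilon_*\nu^w_r=0$, now read off at the closed point of the one-dimensional base, gives $\HO^{i}_{\et}(A,\nu^w_r)=0$ for $i\geq 2$, so every summand with $j<s$ dies, while the term $j=s$ is $\HO^{n-s+1}_{\et}(A,\mathbb{Z}/p^r(n-s))$, as claimed. The main obstacle I anticipate is justifying the single-degree concentration in characteristic $p$: one must ensure that the Cartier/Artin--Schreier--Witt presentation of $\nu^n_r$ with $\epsilon_*$-acyclic outer terms is available over the possibly imperfect base $A$, which is where I would lean on the results of \cite{Sh} already used in \S\ref{SKH}, and that the d\'{e}vissage in $r$ is uniform. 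By contrast, the collapse of Theorem \ref{P1h} at $s=n+1$ and the edge-map identifications in the first part are purely formal, so the essential new input is the char-$p$ acyclicity of $\nu^n_r$ for $\epsilon_*$ in degrees $\geq 2$.
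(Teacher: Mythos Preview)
Your argument is correct and mirrors the paper's: both deduce the corollary from Theorem \ref{P1h} via the edge identification $\Gamma(X,R^{n+1}\epsilon_*\mathbb{Z}/p^r(n))\simeq \HO^{n+1}_{\Zar}(X,\tau_{\geq n+1}R\epsilon_*\mathbb{Z}/p^r(n))$ for the first claim, and via the single-degree concentration $\tau_{\geq n+1}R\epsilon_*\mathbb{Z}/p^r(n)\simeq R^{n+1}\epsilon_*\mathbb{Z}/p^r(n)[-(n+1)]$ in characteristic $p$ for the second. The only difference is in justifying the characteristic-$p$ acyclicity: where you argue via the Cartier/Artin--Schreier--Witt sequence and quasi-coherence (which works, since $\Omega^n$ and $\Omega^n/\mathfrak{B}^n$ are quasi-coherent as $\mathcal{O}^p$-modules), the paper simply invokes \cite[Expos\'{e} X, Th\'{e}or\`{e}me 5.1]{SGA4}, i.e.\ $\operatorname{cd}_p\leq 1$ for affine schemes of characteristic $p$, which immediately gives both $R^{\geq 2}\epsilon_*\nu_r^n=0$ (Zariski-locally) and $\HO^{\geq 2}_{\et}(A,\nu_r^w)=0$, bypassing the d\'{e}vissage you anticipated as the main obstacle.
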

\begin{proof}\upshape
By 
\cite[Expos\'{e} X, Th\'{e}or\`{e}me 5.1]{SGA4},
the statement directly follows from 
Theorem \ref{P1h}.
\end{proof}
\begin{cor}\upshape
Let $A$ be a regular local ring with $\operatorname{dim}(A)\leq 1$.
Suppose that $\operatorname{char}(A)$ is $p>0$ or $(0, p)$.
Let $r>0$ be an integer.
Then the natural map
\begin{equation}\label{HBHas}
\HO^{n+1}_{\et}(
k(
\mathbb{P}_{A}^{m}
),
\mathbb{Z}/p^{r}(n)
)    
\to
\prod
_{x\in (\mathbb{P}_{A}^{m})^{(1)}}
\HO^{n+1}_{\et}(
k(
\mathcal{O}_{
\mathbb{P}_{A}^{m}, x}^{h}
),
\mathbb{Z}/p^{r}(n)
)
\end{equation}
is injective where
\begin{math}
\mathcal{O}_{
\mathbb{P}_{A}^{m}, x}^{h}
\end{math}
is the henselization
of $\mathbb{P}_{A}^{m}$ at a point $x\in (\mathbb{P}_{A}^{m})^{(1)}$.

Suppose that $A$ is henselian discrete valuation ring. 
Then the homomorphism
\begin{equation}\label{HBAr}
\HB^{n+1}(
\mathbb{P}_{A}^{m}
)_{p^{r}}
\simeq
\HB^{n+1}(
\mathbb{P}_{k}^{m}
)_{p^{r}}    
\end{equation}
is an isomorphism.
\end{cor}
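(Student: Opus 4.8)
The plan is to derive both assertions from the projective bundle computation of Corollary \ref{HBP1}, together with the henselian invariance used in Lemma \ref{Vm2m} and the comparison of Proposition \ref{compi}, identifying $\HB^{n+1}(-)_{p^r}$ with $\HO^{n+1}_{\et}(-,\mathbb{Z}/p^r(n))$ via \cite[Proposition 3.4]{Sak} throughout. For the isomorphism (\ref{HBAr}) I would first apply Corollary \ref{HBP1} in mixed characteristic to get $\HB^{n+1}(\mathbb{P}^m_A)_{p^r}=\HO^{n+1}_{\et}(A,\mathbb{Z}/p^r(n))$, and apply the same corollary in equal characteristic $p$ over the base $k$ to get $\HB^{n+1}(\mathbb{P}^m_k)_{p^r}=\HO^{n+1}_{\et}(k,\mathbb{Z}/p^r(n))$. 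Under these identifications the map (\ref{HBAr}) becomes the restriction $\HO^{n+1}_{\et}(A,\mathbb{Z}/p^r(n))\to\HO^{n+1}_{\et}(k,\mathbb{Z}/p^r(n))$ to the closed fibre. Since $A$ is henselian, I would rewrite the source through the henselian invariance of \cite[p.777, The proof of Proposition 2.2.b)]{Ge} (exactly as in the proof of Lemma \ref{Vm2m}) as $\HO^{n+1}_{\et}(k,i^{*}\mathfrak{T}_r(n))$, and then invoke Proposition \ref{compi} to identify $\HO^{n+1}_{\et}(k,i^{*}\mathbb{Z}/p^r(n))$ with $\HO^{n+1}_{\et}(k,\mathbb{Z}/p^r(n))$; checking that this chain of identifications is the natural restriction then gives the isomorphism.

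For the injectivity of (\ref{HBHas}) I set $X=\mathbb{P}^m_A$. The first observation is that, for a codimension one point $x$, the $x$-component of the differential $d_1^{0,n+1}(X,n)$ of the Cousin complex $\BO^{1,n}_{p^r}(X)^{\bullet}$ factors as the restriction $\HO^{n+1}_{\et}(k(X),\mathbb{Z}/p^r(n))\to\HO^{n+1}_{\et}(k(\mathcal{O}^h_{X,x}),\mathbb{Z}/p^r(n))$ followed by the residue of the henselian discrete valuation ring $\mathcal{O}^h_{X,x}$ (using excision $\HO^{n+2}_{x}(X_{\et})=\HO^{n+2}_{x}((\mathcal{O}^h_{X,x})_{\et})$ as in \cite[p.93, III, Corollary 1.28]{M}). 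Hence any class killed by (\ref{HBHas}) lies in $\ker d_1^{0,n+1}(X,n)$. By \cite[Theorem 4.6]{Sak} this kernel is exactly the image of the global group $\HB^{n+1}(X)_{p^r}$ under the injective restriction to the generic point, so it remains to prove that a global class whose generic fibre dies in every $k(\mathcal{O}^h_{X,x})$ is zero.

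To finish I would use that, by Corollary \ref{HBP1} and Theorem \ref{P1h}, every element of $\HB^{n+1}(X)_{p^r}=\HO^{n+1}_{\et}(A,\mathbb{Z}/p^r(n))$ is a pullback $f_m^{*}\beta$ along the projection $f_m\colon\mathbb{P}^m_A\to\spec A$. Restricting $f_m^{*}\beta$ to the henselization $\mathcal{O}^h_{X,x_0}$ at the generic point $x_0$ of the closed fibre and using the injectivity (\ref{Binj}) on $\mathcal{O}^h_{X,x_0}$, the hypothesis forces the image of $\beta$ in $\HO^{n+1}_{\et}(\mathcal{O}^h_{X,x_0},\mathbb{Z}/p^r(n))$ to vanish. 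After factoring $\spec\mathcal{O}^h_{X,x_0}\to\spec A$ through the henselization $A^h$, henselian invariance and Proposition \ref{compi} turn this into the vanishing of a class in $\HO^{n+1}_{\et}(k,\mathbb{Z}/p^r(n))$ along the restriction to $k(\mathbb{P}^m_k)$, which is split injective because $\mathbb{A}^m_k$ has a rational point. Tracing back, this yields $\beta=0$ and hence the injectivity of (\ref{HBHas}).

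The step I expect to be the main obstacle is precisely this last detection of global classes. Because $A$ is not assumed henselian in the first assertion, one cannot apply henselian invariance to $A$ itself; the argument must route the comparison maps through $A^h$ and the generic fibre, combining the $\mathbb{A}^1$-homotopy invariance underlying Theorem \ref{P1h} over $k(A)$ with the two henselian identifications so that the resulting composite is exactly the split injective restriction to the residue field $k$. Arranging these compatibilities — and ensuring that the residue-field extension used is one over which restriction is injective — is the delicate point; the remaining verifications (the factorization of the residue through the henselization, and the compatibility of the various identifications with restriction) are formal.
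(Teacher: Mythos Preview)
Your argument for the isomorphism (\ref{HBAr}) is correct and matches the paper's: apply Corollary \ref{HBP1} on both sides and then use the henselian identification $\HO^{n+1}_{\et}(A,\mathbb{Z}/p^{r}(n))\simeq\HO^{n+1}_{\et}(k,\mathbb{Z}/p^{r}(n))$ (the paper cites \cite[Theorems 5.3 and 5.6]{Sak} here; your route via Proposition \ref{compi} and Lemma \ref{Vm2m} amounts to the same thing).

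For the injectivity of (\ref{HBHas}) your initial reduction is also the paper's: classes in the kernel lie in $\ker d_1^{0,n+1}$, which by \cite[Theorem 4.6]{Sak} and Corollary \ref{HBP1} equals the image of $\HO^{n+1}_{\et}(A,\mathbb{Z}/p^{r}(n))$ under $f_m^{*}$. The gap is in your detection step, and it is exactly the obstacle you flagged. Choosing $x_0$ to be the generic point of the closed fibre forces you to prove that $\HO^{n+1}_{\et}(A,\mathbb{Z}/p^{r}(n))\to\HO^{n+1}_{\et}(k,\mathbb{Z}/p^{r}(n))$ is injective for a \emph{non-henselian} DVR $A$, and this is false in general; routing through $A^h$ does not help because the very map $\HO^{n+1}_{\et}(A)\to\HO^{n+1}_{\et}(A^h)$ is the one whose injectivity is in question. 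Your ``split injective restriction to the residue field $k$'' is the split injectivity of $k\to k(\mathbb{P}^m_k)$, but that only shows the image of $\beta$ in $\HO^{n+1}_{\et}(k)$ vanishes, not $\beta$ itself.

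The paper resolves this by choosing a different codimension-$1$ point: take $x$ to be the generic point of a \emph{hyperplane} $\mathbb{P}^{m-1}_A\subset\mathbb{P}^m_A$, so that $\kappa(x)=k(\mathbb{P}^{m-1}_A)$ dominates $A$ rather than $k$. Using \cite[Theorems 5.3, 5.6]{Sak} one has $\HO^{n+1}_{\et}(\mathcal{O}^h_{X,x})\simeq\HO^{n+1}_{\et}(\kappa(x))$, and the map $\HO^{n+1}_{\et}(A)\to\HO^{n+1}_{\et}(\kappa(x))$ is injective because it factors through $\HO^{n+1}_{\et}(A)\to\HO^{n+1}_{\et}(\mathbb{A}^{m-1}_A)\to\HO^{n+1}_{\et}(k(\mathbb{A}^{m-1}_A))$: the second map is injective by \cite[Theorem 4.6]{Sak}, and the first is injective since specializing to a $k(A)$-rational point $y\in(\mathbb{A}^{m-1}_A)^{(m-1)}$ recovers the natural map $\HB^{n+1}(A)\to\HB^{n+1}(k(A))$, itself injective by \cite[Theorem 4.6]{Sak} applied to $\spec A$. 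The point is that injectivity to the \emph{generic} point of $\spec A$ is available without any henselian hypothesis, whereas injectivity to the closed point is not.
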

\begin{proof}\upshape
By \cite[Theorem 4.6]{Sak} and Corollary \ref{HBP1}, 
the sequence
\begin{align*}
0
&
\to 
\HO^{n+1}_{\et}(A, \mathbb{Z}/p^{r}(n))
\to
\HO^{n+1}_{\et}(
k(
\mathbb{P}_{A}^{m}
), 
\mathbb{Z}/p^{r}(n))
\\
&\to
\bigoplus_{x\in(\mathbb{P}_{A}^{m})^{(1)}}
\HO^{n+2}_{x}(
(\mathbb{P}_{A}^{m})_{\et},
\mathbb{Z}/p^{r}(n)
)
\end{align*}
is exact.
Moreover, we have an isomorphism
\begin{equation*}
\HO^{n+1}_{\et}(
\mathcal{O}^{h}_{\mathbb{P}^{m}_{A}, x}, \mathbb{Z}/p^{r}(n)
)    
\simeq 
\HO^{n+1}_{\et}(
\kappa(x), \mathbb{Z}/p^{r}(n)
)    
\end{equation*}
for $x\in (\mathbb{P}^{m}_{A})^{(1)}$ by 
\cite[Theorem 5.3]{Sak}, \cite[Theorem 5.6]{Sak} and 
\cite[p.777, The proof of Proposition 2.2.b)]{Ge}. 
Hence the sequence
\begin{equation*}
0\to 
\HO^{n+1}_{\et}(
\kappa(x),
\mathbb{Z}/p^{r}(n)
)
\to
\HO^{n+1}_{\et}(
k(
\mathcal{O}_{\mathbb{P}_{A}^{m}, x}^{h}
),
\mathbb{Z}/p^{r}(n)
)
\to
\HO^{n+2}_{x}(
\mathbb{P}_{A}^{m}, \mathbb{Z}/p^{r}(n)
)
\end{equation*}
is exact for $x\in (\mathbb{P}_{A}^{m})^{(1)}$.
Let $X\to Y$ be a morphism between smooth schemes over $\spec(A)$.
Since we have
\begin{equation*}
\HB^{n+1}(Z)
=
\Gamma\left(
Z,
R^{n+1}\epsilon_{*}
\mathbb{Q}/\mathbb{Z}(n)_{\et}
\right)
\end{equation*}
for a smooth scheme $Z$ over $\spec(A)$ by
\cite[Proposition 3.7]{Sak},
we can define a homomorphism
\begin{equation*}
\HB^{n+1}(Y)\to  
\HB^{n+1}(X)
\end{equation*}
by the morphism (\ref{adipul}) and \cite[p.732, Theorem 3.5.1]{SaL}.
Let the residue field $\kappa(x)$ of a point 
\begin{math}
x\in (\mathbb{P}_{A}^{m})^{(1)}
\end{math}
be the fraction field of $\mathbb{P}_{A}^{m-1}$
and the residue field $\kappa(y)$ of a point 
\begin{math}
y\in (\mathbb{A}_{A}^{m-1})^{(m-1)}
\end{math}
be $k(A)$. Then the composite of the homomorphism 
\begin{equation}\label{AAm}
\HB^{n+1}(A)
\to
\HB^{n+1}(\mathbb{A}_{A}^{m-1}) 
\end{equation}
and the homomorphism 
\begin{equation*}
\HB^{n+1}(\mathbb{A}_{A}^{m-1})
\to
\HB^{n+1}(\kappa(y))    
\end{equation*}
agrees with the natural map
\begin{equation*}
\HB^{n+1}(A)\to 
\HB^{n+1}(k(A))
\end{equation*}
by \cite[p.538, Proposition 4.2.8]{SaP} and \cite[p.734, Corollary 3.5.3]{SaL}. 
So the homomorphism (\ref{AAm})
is injective by \cite[Theorem 4.6]{Sak} and the homomorphism
\begin{equation*}
\HO^{n+1}_{\et}(
A, \mathbb{Z}/p^{r}(n)
)
\to
\HO^{n+1}_{\et}(\kappa(x),
\mathbb{Z}/p^{r}(n)
)
\end{equation*}
is also injective by 
\cite[Theorem 4.6]{Sak} and \cite[Proposition 3.4]{Sak}. 
Hence the natural map (\ref{HBHas}) is injective.

By Corollary \ref{HBP1}, \cite[Theorem 5.3]{Sak}
and \cite[Theorem 5.6]{Sak}, 
the homomorphism (\ref{HBAr}) is an isomorphism.
This completes the proof.
\end{proof}
\begin{cor}\upshape\label{Ppbm2}
Let $A$ be a henselian discrete valuation ring
of mixed characteristic $(0, p)$. 
Suppose that $A$ contains $p$-th roots of unity.
Then we have an isomorphism
\begin{align*}
\HO_{\Nis}^{s}\left(
\mathbb{P}^{m}_{A}, R^{n+1}\alpha_{*}\mathbb{Z}/p(n)
\right)
\simeq
\begin{cases}
\HO_{\et}^{n-s+1}
(A, \mathbb{Z}/p(n-s))
& (0\leq s\leq m)  \\
0
& (m<s)
\end{cases}
\end{align*}
for 
where 
$\alpha: (\mathbb{P}_{A}^{m})_{\et}\to(\mathbb{P}_{A}^{m})_{\Nis}$ 
is the canonical map of sites
and $\alpha_{*}$ is the forgetful functor.
Moreover, the sequence
\begin{align}\label{Pje}
0\to&    
\bigoplus_{x\in (\mathbb{P}_{A}^{m})^{(0)}}
\HO^{n+2}_{x}\left(
(\mathbb{P}^{m}_{A})_{\et}, 
\mathbb{Z}/p(n)
\right)
\to
\bigoplus_{x\in (\mathbb{P}_{A}^{m})^{(1)}}
\HO^{n+3}_{x}\left(
(\mathbb{P}^{m}_{A})_{\et}, \mathbb{Z}/p(n)
\right) \nonumber \\
\to&
\bigoplus_{x\in (\mathbb{P}_{A}^{m})^{(2)}}
\HO^{n+4}_{x}\left(
(\mathbb{P}^{m}_{A})_{\et}, \mathbb{Z}/p(n)
\right)
\to\cdots
\end{align}
is exact.
\end{cor}
\begin{proof}\upshape
Let $k$ be the residue field of $A$,
$i: \mathbb{P}^{m}_{k}\to\mathbb{P}^{m}_{A}$
the inclusion of the closed fiber
and 
$j: \mathbb{P}^{m}_{k(A)}\to\mathbb{P}^{m}_{A}$
the inclusion of the generic fiber. 
Then we have an quasi-isomorphism
\begin{equation*}
\tau_{\geq n+2}R\alpha_{*}j_{!}\mu_{p}^{\otimes n}
\xrightarrow{\simeq}
\tau_{\geq n+2}R\alpha_{*}\mathbb{Z}/p(n)
\end{equation*}
by Proposition \ref{compi} and \cite[p.777, The proof of Proposition 2.2.b)]{Ge}.
Since $A$ contains $p$-th roots of unity, we have
\begin{equation*}
\tau_{\geq n+2}R\alpha_{*}j_{!}\mu_{p}^{\otimes n}
\simeq
\tau_{\geq n+2}R\alpha_{*}\mathbb{Z}/p(n-1).
\end{equation*}
Let $R$ be the henselization of $\mathbb{P}^{m}_{A}$ at a point of 
$i(\mathbb{P}^{m}_{k})$ and 
$\pi$ a prime element of $A$.
Then we have an isomorphism
\begin{equation*}
\HO^{k}_{\et}(R, \mathbb{Z}/p(n))
\simeq  
\HO^{k}_{\et}(R/(\pi), \mathbb{Z}/p(n))
\end{equation*}
for $k\geq n+1$
by Proposition \ref{compi} and the isomorphism (\ref{mpi}). 
Hence we have a distinguished triangle
\begin{equation}\label{hji}
\cdots
\to
\tau_{\geq n+1}R\alpha_{*}\mathbb{Z}/p(n-1)
\to
\tau_{\geq n+1}R\alpha_{*}\mathbb{Z}/p(n)
\to
i_{*}\left(
\tau_{\geq n+1}R\alpha_{*}\mathbb{Z}/p(n)
\right)
\to 
\cdots
\end{equation}
by
\cite[p.776, Proposition 2.2]{Ge} and \cite[pp.75--76, II, Remark 3.13]{M}.
Let $X$ be a smooth scheme over 
the spectrum of a regular local ring $B$
with $\operatorname{dim}(B)\leq 1$
and $\epsilon: X_{\et}\to X_{\Zar}$ 
the canonical map of sites. 
Then we have an isomorphism
\begin{equation*}
\HO_{\Zar}^{t}\left(
X, 
\tau_{\geq n+1}R\epsilon_{*}\mathbb{Z}/p(n)
\right)
\simeq
\HO_{\Nis}^{t}\left(
X, 
\tau_{\geq n+1}R\alpha_{*}\mathbb{Z}/p(n)
\right)
\end{equation*}
for $t\geq n+1$ by 
\cite[p.781, Proposition 3.6]{Ge}. Hence we have
\begin{equation}\label{T2}
\HO^{t}_{\Nis}\left(
\mathbb{P}^{m}_{A},
\tau_{\geq n+1}R\alpha_{*}\mathbb{Z}/p(n-1)
\right)
=0
\end{equation}
for $t\geq n+1$ by the distinguished triangle (\ref{hji}) and Theorem \ref{P1h}.
Since we have a distinguished triangle
\begin{equation*}
\cdots
\to
R^{n}\alpha_{*}\mathbb{Z}/p(n-1)
[-n]
\to
\tau_{\geq n}R\alpha_{*}\mathbb{Z}/p(n-1)
\to
\tau_{\geq n+1}R\alpha_{*}\mathbb{Z}/p(n-1)
\to 
\cdots,
\end{equation*}
we have an isomorphism
\begin{align*}
\HO_{\Nis}^{s}\left(
\mathbb{P}^{m}_{A}, R^{n+1}\alpha_{*}\mathbb{Z}/p(n)
\right)
\simeq
\begin{cases}
\HO_{\et}^{n-s+1}
(A, \mathbb{Z}/p(n-s))
& (0\leq s\leq m)  \\
0
&
(m<s)
\end{cases}
\end{align*}
%
by the equation (\ref{T2}) and Theorem \ref{P1h}.
Since we have a distinguished triangle
%
\begin{align*}
\cdots
&\to
R^{n+1}\alpha_{*}\mathbb{Z}/p(n-1)
[-(n+1)]
\to
\tau_{\geq n+1}R\alpha_{*}\mathbb{Z}/p(n-1)
\\
&\to
\tau_{\geq n+2}R\alpha_{*}\mathbb{Z}/p(n-1)
\to 
\cdots
\end{align*}
%
and a quasi-isomorphism
\begin{equation*}
\tau_{\geq n+2}R\alpha_{*}\mathbb{Z}/p(n-1)    
\simeq
\tau_{\geq n+2}R\alpha_{*}\mathbb{Z}/p(n), 
\end{equation*}
we have
\begin{equation*}
\HO^{s}_{\Nis}
(\mathbb{P}^{m}_{A}, 
R^{n+1}\alpha_{*}\mathbb{Z}/p(n-1))
=0
\end{equation*}
for any integer $s\geq 0$
by the equation (\ref{T2}).
Hence the sequence (\ref{Pje}) 
is exact by Lemma \ref{Zeh}.
This completes the proof.
\end{proof}
\begin{prop}\upshape\label{AHV}
Let $k$ be a field. 
Then we have
\begin{align*}
\HO^{j}_{\Zar}\left(
\mathbb{A}^{i}_{k}, \mathcal{H}^{s}(\mathbb{Z}/m(n))
\right)
=0
\end{align*}
for $j>0$ and $s\leq n$ in the following cases:
\begin{itemize}
\item[(i)] In the case where $m$ is
a prime number which is prime to $\operatorname{char}(k)$. 
\item[(ii)] In the case where
$\operatorname{char}(k)=p>0$
and $m=p^{r}$.
\end{itemize}
\end{prop}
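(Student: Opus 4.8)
The plan is to translate the statement into the vanishing of the higher cohomology of a Cousin complex over $\mathbb{A}^{i}_{k}$ and then to collapse that cohomology by homotopy invariance. Since $\mathbb{A}^{i}_{k}$ is smooth over the regular local ring $k$ with $\operatorname{dim}(k)=0$, I would form the coniveau spectral sequence (\ref{sp1}) of Theorem \ref{sp} for $\mathcal{F}^{\bullet}=\mathbb{Z}/m(n)_{\et}$. Exactly as in the proof of Lemma \ref{Zeh} (i) (whose argument only uses that the base is regular of dimension $\leq 1$), the results \cite[p.774, Theorem 1.2.1, Theorem 1.2.2 and Theorem 1.2.5]{Ge} show that the row $\BO^{s-n,n}_{m}(\mathbb{A}^{i}_{k})^{\bullet}$ is a flabby resolution of $R^{s}\epsilon_{*}\mathbb{Z}/m(n)$ for $s\leq n$, and Beilinson--Lichtenbaum \cite[p.774, Theorem 1.2.2]{Ge} identifies this sheaf with $\mathcal{H}^{s}(\mathbb{Z}/m(n))$ in that range. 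Hence $\HO^{j}_{\Zar}(\mathbb{A}^{i}_{k},\mathcal{H}^{s}(\mathbb{Z}/m(n)))=\HO^{j}(\BO^{s-n,n}_{m}(\mathbb{A}^{i}_{k})^{\bullet})=E_{2}^{j,s}$, and the proposition becomes the assertion $E_{2}^{j,s}=0$ for $j>0$, $s\leq n$, i.e.\ the exactness of the global Cousin complex in positive degrees.

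Next I would pin down the cohomology sheaf and record the Gersten resolution underlying each case. In case (i), $\mathbb{Z}/m(n)_{\et}\simeq\mu_{m}^{\otimes n}$ by \cite[Theorem 1.5]{Ge-L2}, so $\mathcal{H}^{s}(\mathbb{Z}/m(n))=R^{s}\epsilon_{*}\mu_{m}^{\otimes n}$ is the usual Bloch--Ogus sheaf and the local exactness of (\ref{InBO}) is \cite[Corollary 2.2.2]{C-H-K}; in case (ii), $\mathbb{Z}/p^{r}(n)_{\et}\simeq\nu^{n}_{r}[-n]$ by \cite[p.787, \S 5, (12)]{Ge}, so the underlying theory is the cohomology of the logarithmic de Rham--Witt sheaves and the local exactness is \cite[p.600, Theorem 4.1]{Sh}.

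The crux is then homotopy invariance, which I would use to reduce to the point. The projection $f:\mathbb{A}^{i}_{k}\to\spec(k)$ together with the zero section realises $\HO^{j}_{\Zar}(\mathbb{A}^{i}_{k},\mathcal{H}^{s}(\mathbb{Z}/m(n)))$ as isomorphic to $\HO^{j}_{\Zar}(\spec(k),\mathcal{H}^{s}(\mathbb{Z}/m(n)))$, which vanishes for $j>0$ since $\spec(k)$ is a single point. The required homotopy invariance of the groups $\HO^{j}_{\Zar}(-,\mathcal{H}^{s})$ is the homotopy-invariance half of the Bloch--Ogus--Gabber formalism behind \cite{C-H-K}: in case (i) it rests on the $\mathbb{A}^{1}$-invariance of \'{e}tale cohomology with coefficients prime to $\operatorname{char}(k)$, and in case (ii) on the $\mathbb{A}^{1}$-invariance of the cohomology of $\nu^{n}_{r}$, equivalently of mod-$p^{r}$ higher Chow groups.

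I expect the main obstacle to be precisely this homotopy-invariance input in case (ii). Whereas the prime-to-$p$ case follows from the clean $\mathbb{A}^{1}$-invariance of torsion \'{e}tale cohomology, the characteristic-$p$ case requires $\HO^{j}_{\et}(\mathbb{A}^{i}_{k},\nu^{n}_{r})\simeq\HO^{j}_{\et}(k,\nu^{n}_{r})$, which I would derive from the homotopy invariance of $\mathbb{Z}/p^{r}(n)_{\Zar}$ via $\mathbb{Z}/p^{r}(n)_{\et}\simeq\nu^{n}_{r}[-n]$, while taking care that $k$ need not be perfect. Should the spectral-sequence comparison prove awkward, a safe fallback is an induction on $i$ through the fibration $\mathbb{A}^{i}_{k}=\mathbb{A}^{1}_{\mathbb{A}^{i-1}_{k}}$, importing homotopy invariance one variable at a time from the exactness of the Gersten complex of $\mathbb{A}^{1}$ over the residue fields of the points of $\mathbb{A}^{i-1}_{k}$.
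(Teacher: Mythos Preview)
Your overall strategy---reduce to homotopy invariance of motivic cohomology over $\mathbb{A}^{i}_{k}$---is the right one and is also what the paper does. But the two cases are handled quite differently from your outline, and in case (ii) your proposal contains a genuine misstep.

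\textbf{Case (ii).} You write that the argument ``requires $\HO^{j}_{\et}(\mathbb{A}^{i}_{k},\nu^{n}_{r})\simeq\HO^{j}_{\et}(k,\nu^{n}_{r})$'' and propose to derive this from the homotopy invariance of $\mathbb{Z}/p^{r}(n)_{\Zar}$ via $\mathbb{Z}/p^{r}(n)_{\et}\simeq\nu^{n}_{r}[-n]$. This is false: \'etale cohomology of $\nu^{n}_{r}$ is \emph{not} $\mathbb{A}^{1}$-invariant. In fact the paper's own Proposition~\ref{infh} shows $\HO^{1}_{\et}(\mathbb{A}^{1}_{k},\nu^{n}_{r})$ is strictly larger than $\HO^{1}_{\et}(k,\nu^{n}_{r})$, and the subsequent Remark uses exactly this to explain why the $p$-adic Tate twists fail homotopy invariance. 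Beilinson--Lichtenbaum only identifies Zariski and \'etale hypercohomology in degrees $\leq n$, so the passage from $\mathbb{Z}/p^{r}(n)_{\Zar}$ to $\HO^{j}_{\et}(-,\nu^{n}_{r})$ breaks precisely at $j=1$. The paper avoids this entirely: by Geisser--Levine \cite[Theorem 8.3]{G-L} one has $\mathbb{Z}/p^{r}(n)_{\Zar}\simeq\mathcal{H}^{n}(\mathbb{Z}/p^{r}(n)_{\Zar})[-n]$ (so $\mathcal{H}^{s}=0$ for $s<n$), and then
\[
\HO^{j}_{\Zar}\bigl(\mathbb{A}^{i}_{k},\mathcal{H}^{n}(\mathbb{Z}/p^{r}(n))\bigr)=\HO^{n+j}_{\Zar}\bigl(\mathbb{A}^{i}_{k},\mathbb{Z}/p^{r}(n)\bigr)=\HO^{n+j}_{\Zar}\bigl(k,\mathbb{Z}/p^{r}(n)\bigr)=0
\]
by \cite[Corollary 3.5 and Corollary 4.4]{Ge}. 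No \'etale cohomology of $\nu^{n}_{r}$ enters.

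\textbf{Case (i).} Here your direct appeal to a Bloch--Ogus homotopy-invariance statement from \cite{C-H-K} is plausible but vague; the paper instead argues by induction on $n$. One first reduces by a norm argument to $\mu_{m}\subset k$ and $s=n$; then Beilinson--Lichtenbaum identifies $\mathcal{H}^{t}(\mathbb{Z}/m(n))$ with $\mathcal{H}^{t}(\mathbb{Z}/m(N+1))$ for $t\leq n\leq N+1$, so the inductive hypothesis kills $\HO^{n+j}_{\Zar}(\mathbb{A}^{i}_{k},\tau_{\leq n-1}\mathbb{Z}/m(n))$ via the hypercohomology spectral sequence, and the distinguished triangle $\tau_{\leq n-1}\to\mathbb{Z}/m(n)_{\Zar}\to\mathcal{H}^{n}[-n]$ together with \cite[Corollary 3.5]{Ge} finishes. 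Your route, if made precise, is shorter; the paper's is more self-contained and uses only the homotopy invariance of Zariski motivic cohomology rather than a general axiomatic result.
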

\begin{proof}
First we prove that the statement in the case of
(i) is true by induction on $n$.

Assume that $n=0$. Then we have
\begin{equation*}
\HO^{j}_{\Zar}\left(
\mathbb{A}^{i}_{k}, \mathcal{H}^{0}(\mathbb{Z}/m(0))
\right)   
=
\HO^{j}_{\Zar}\left(
\mathbb{A}^{i}_{k}, \mathbb{Z}/m(0)
\right)  
=
\HO^{j}_{\Zar}\left(
k, \mathbb{Z}/m(0)
\right)
=0
\end{equation*}
for $j>0$ by \cite[p.781, Corollary 3.5]{Ge}
and \cite[p.786, Corollary 4.4]{Ge}.

Assume that the statement in the case of (i)
is true for $n\leq N$.
We prove that the statement is true
for $n=N+1$ in the case of (i).

By a standard norm argument, 
the proof of the statement in the case (i)
is reduced to the case where
$m$-th roots of unity is contained in $k$ and $s=n$.
So we assume that $k$ contains $m$-th roots of unity.
By \cite[p.774, Theorem 1.2]{Ge}, we have
an isomorphism
\begin{equation}\label{BLe}
\tau_{\leq n}R\epsilon_{*}
\mu_{m}^{\otimes n}
\simeq
\mathbb{Z}/m(n)_{\Zar}
\end{equation}
for any integer $n\geq 0$.
Since $k$ contains $m$-th roots of unity, 
we have an isomophism
\begin{equation*}
\mathcal{H}^{n}
\left(
\mathbb{Z}/m(n)_{\Zar}
\right)    
\simeq
\mathcal{H}^{n}
\left(
\mathbb{Z}/m(N+1)_{\Zar}
\right) 
\end{equation*}
for $n\leq N+1$ by (\ref{BLe}).
Hence we have
\begin{equation}\label{Bnl}
\HO_{\Zar}^{n+j}\left(
\mathbb{A}^{i}_{k},
\tau_{\leq n-1}\mathbb{Z}/m(n)
\right)    
=0
\end{equation}
for $j>0$ and $n=N+1$
by the spectral sequence
\begin{equation*}
E^{s, t}_{2}
=\HO^{s}_{\Zar}(
\mathbb{A}^{i}_{k}, \mathcal{H}^{t}(\tau_{\leq n-1}\mathbb{Z}/m(n))
)
\Rightarrow
E^{s+t}
=\HO^{s+t}_{\Zar}(
\mathbb{A}^{i}_{k}, 
\tau_{\leq n-1}\mathbb{Z}/m(n)
)
\end{equation*}
for $n=N+1$ and the assumption.
Since we have a distinguished triangle
\begin{equation}\label{dt}
\cdots\to
\tau_{\leq n-1}\mathbb{Z}/m(n)_{\Zar}
\to
\mathbb{Z}/m(n)_{\Zar}
\to
\mathcal{H}^{n}(\mathbb{Z}/m(n)_{\Zar})[-n]
\to\cdots
\end{equation}
by \cite[p.786, Corollary 4.4]{Ge},
the statement in the case of (i) 
follows from (\ref{Bnl}) and 
\cite[p.781, Corollary 3.5]{Ge}.

Next we prove that the statement in the case of (ii) is true.
Then we have a quasi-isomorphism
\begin{equation}\label{vpn}
\mathbb{Z}/p^{r}(n)_{\Zar}
\simeq
\mathcal{H}^{n}(\mathbb{Z}/p^{r}(n)_{\Zar})[-n]   
\end{equation}
by \cite[Theorem 8.3]{G-L}. Hence we have
\begin{equation*}
\HO^{j}_{\Zar}\left(
\mathbb{A}^{i}_{k}, \mathcal{H}^{n}(\mathbb{Z}/p^{r}(n))
\right) 
=
\HO^{n+j}_{\Zar}\left(
\mathbb{A}^{i}_{k}, \mathbb{Z}/p^{r}(n)
\right) 
=
\HO^{n+j}_{\Zar}\left(
k, \mathbb{Z}/p^{r}(n)
\right)
=0
\end{equation*}
for $j>0$ by (\ref{vpn}) and \cite[p.781, Corollary 3.5]{Ge}.
This completes the proof.
\end{proof}
\begin{cor}\upshape\label{Afex}
Let $A$ be a regular local ring with
$\operatorname{dim}(A)\leq 1$. 
Then the sequence
\begin{align}\label{Ag}
0&\to  
\HO^{s}_{\Zar}(A, \mathbb{Z}/m(n))
\to
\HO^{s}_{\Zar}(
k(\mathbb{A}^{i}_{A}),
\mathbb{Z}/m(n)
) \nonumber 
\\
&\to
\bigoplus_{x\in (\mathbb{A}_{A}^{i})^{(1)}}
\HO^{s-1}_{\Zar}(\kappa(x), \mathbb{Z}/m(n-1)) 
\to\cdots
\end{align}
is exact for $s\leq n$ in the following cases:
\begin{itemize}
\item[(i)] In the case where $m$ is
a prime number which is prime to $\operatorname{char}(A)$. 
\item[(ii)] In the case where
$\operatorname{char}(A)=p>0$
and $m=p^{r}$.
\end{itemize}
\end{cor}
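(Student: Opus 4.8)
The plan is to recognize the sequence (\ref{Ag}) as the augmented coniveau (Cousin) complex of $X=\mathbb{A}^{i}_{A}$ with $\mathbb{Z}/m(n)_{\Zar}$-coefficients, and to deduce its exactness from the sheaf-cohomology vanishing supplied by Proposition \ref{AHV}. First I would fix $s\leq n$ and use the purity/Gysin isomorphism for the Zariski motivic complex (cf. \cite[p.774, Theorem 1.2.2 and Theorem 1.2.5]{Ge}), which gives $\HO^{s+j}_{x}(X_{\Zar},\mathbb{Z}/m(n))\simeq \HO^{s-j}_{\Zar}(\kappa(x),\mathbb{Z}/m(n-j))$ for $x\in X^{(j)}$. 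Thus the terms of (\ref{Ag}) are exactly the groups $E_{1}^{j,s}$ of the Zariski analogue of the coniveau spectral sequence of Theorem \ref{sp} (cf. \cite[Part 1, \S 1]{C-H-K}). Moreover, by homotopy invariance of the motivic cohomology (\cite[p.781, Corollary 3.5]{Ge}) the augmentation term $\HO^{s}_{\Zar}(A,\mathbb{Z}/m(n))$ coincides with $\HO^{s}_{\Zar}(X,\mathbb{Z}/m(n))$, so that (\ref{Ag}) is precisely the augmented $E_{1}^{\bullet,s}$-row.

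Next, by the Bloch--Ogus formalism (cf. \cite[Part 1, \S 1]{C-H-K}) this spectral sequence reads $E_{2}^{p,q}=\HO^{p}_{\Zar}(X,\mathcal{H}^{q}(\mathbb{Z}/m(n)))\Rightarrow \HO^{p+q}_{\Zar}(X,\mathbb{Z}/m(n))$, where $\mathcal{H}^{q}$ is the $q$-th cohomology sheaf of $\mathbb{Z}/m(n)_{\Zar}$. Since $\mathcal{H}^{q}(\mathbb{Z}/m(n))=0$ for $q>n$ by \cite[p.786, Corollary 4.4]{Ge}, the exactness of (\ref{Ag}) is equivalent to the two assertions $E_{2}^{p,s}=0$ for all $p>0$ and $\HO^{s}_{\Zar}(X,\mathbb{Z}/m(n))\xrightarrow{\sim}E_{2}^{0,s}$. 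Both of these follow once I establish $\HO^{p}_{\Zar}(X,\mathcal{H}^{q}(\mathbb{Z}/m(n)))=0$ for every $p>0$ and every $q\leq n$: the higher rows then do not interfere along the diagonal, the edge map becomes an isomorphism, and the complex is exact in all positive degrees. This reduces the whole statement to the single vanishing $\HO^{p}_{\Zar}(\mathbb{A}^{i}_{A},\mathcal{H}^{q}(\mathbb{Z}/m(n)))=0$ for $p>0$ and $q\leq n$.

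When $A$ is a field this vanishing is exactly Proposition \ref{AHV}, so the content of the corollary lies in the case $\operatorname{dim}(A)=1$. Here I would argue fibrewise along the projection $\pi\colon \mathbb{A}^{i}_{A}\to \spec(A)$: the generic fibre $\mathbb{A}^{i}_{k(A)}$ and the closed fibre $\mathbb{A}^{i}_{\kappa}$ are affine spaces over fields, to which Proposition \ref{AHV} applies directly. In the equicharacteristic subcases, where $A$ is essentially smooth over a field $k_{0}$, the scheme $\mathbb{A}^{i}_{A}$ is itself essentially smooth over $k_{0}$, and the vanishing is obtained from Proposition \ref{AHV} by localizing the Gersten resolution of $\mathcal{H}^{q}$ to the local rings of $\mathbb{A}^{i}_{A}$. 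The remaining (mixed-characteristic) subcases of (i) are handled by using that $m$ is invertible relative to the relevant residue characteristic, so that $\mathbb{Z}/m(n)_{\Zar}$ behaves like $\mu_{m}^{\otimes n}$ and the fibrewise information can be assembled over $\spec(A)$.

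The hard part will be precisely this passage from a field base to a one-dimensional base. The local rings of $\mathbb{A}^{i}_{A}$ lying in the closed fibre are not local rings of a smooth scheme over a field, so Proposition \ref{AHV} cannot be applied to them verbatim; the delicate step is to glue the generic- and special-fibre vanishing into the global vanishing of $\HO^{p}_{\Zar}(\mathbb{A}^{i}_{A},\mathcal{H}^{q})$, via a localization (Leray) argument for $\pi$ combined with homotopy invariance. It is exactly at this gluing step that the hypotheses on $m$ relative to $\operatorname{char}(A)$ in cases (i) and (ii) must be invoked, and I expect this to be where the bulk of the care is needed.
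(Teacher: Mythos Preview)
Your reduction is exactly the paper's: identify (\ref{Ag}) with the augmented Cousin complex via Lemma~\ref{Zeh}(i), and reduce its exactness to the vanishing $\HO^{p}_{\Zar}(\mathbb{A}^{i}_{A},\mathcal{H}^{q}(\mathbb{Z}/m(n)))=0$ for $p>0$, $q\leq n$, together with the edge identification $\HO^{s}_{\Zar}(\mathbb{A}^{i}_{A},\mathbb{Z}/m(n))\simeq\HO^{0}_{\Zar}(\mathbb{A}^{i}_{A},\mathcal{H}^{s})$. The paper obtains the latter from the truncation triangle $\tau_{\leq s-1}\to\mathbb{Z}/m(n)_{\Zar}\to\tau_{\geq s}$ together with the vanishing for all $q<s$, and then invokes \cite[p.774, Theorem 1.2.5]{Ge}; this is equivalent to your ``edge map becomes an isomorphism'' step.

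Where you diverge is in the case $\dim A=1$. You are right that Proposition~\ref{AHV} is \emph{stated} over a field, but the paper simply invokes it anyway, and the point is that its \emph{proof} applies verbatim with $k$ replaced by $A$: every ingredient used there---homotopy invariance \cite[p.781, Corollary 3.5]{Ge}, the vanishing \cite[p.786, Corollary 4.4]{Ge}, Beilinson--Lichtenbaum \cite[p.774, Theorem 1.2]{Ge}, and in case (ii) the concentration $\mathbb{Z}/p^{r}(n)_{\Zar}\simeq\mathcal{H}^{n}[-n]$---is already available for essentially smooth schemes over such $A$. So there is no gluing step to perform. Your proposed fibrewise/Leray argument along $\pi:\mathbb{A}^{i}_{A}\to\spec A$ would be a genuine detour: Zariski Leray for $\pi$ does not interact well with the sheaves $\mathcal{H}^{q}$, and ``localizing the Gersten resolution to the local rings of $\mathbb{A}^{i}_{A}$'' gives only the local exactness you already have, not the global vanishing of $\HO^{p}_{\Zar}$ that you need. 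Drop that paragraph and instead observe, in one line, that the proof of Proposition~\ref{AHV} goes through for $A$ with $\dim A\leq 1$; then your argument coincides with the paper's.
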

\begin{proof}\upshape
The exactness of (\ref{Ag}) except at the first
two terms follows directly from Lemma \ref{Zeh} and
Proposition \ref{AHV}.
We prove the exactness of (\ref{Ag}) at the first 
two terms.
We have a distinguished triangle
\begin{equation*}
\cdots\to
\tau_{\leq s-1}(\mathbb{Z}/m(n)_{\Zar})
\to
\mathbb{Z}/m(n)_{\Zar}
\to
\tau_{\geq s}(\mathbb{Z}/m(n)_{\Zar})
\to \cdots
\end{equation*}
and
\begin{equation*}
\HO^{t}_{\Zar}(\mathbb{A}^{i}_{A},
\tau_{\leq s-1}(\mathbb{Z}/m(n)))=0    
\end{equation*}
for $t\geq s$ by Proposition \ref{AHV}.
Hence we have
\begin{equation*}
\HO^{s}_{\Zar}(\mathbb{A}^{i}_{A}, \mathbb{Z}/m(n))
=\HO^{0}_{\Zar}(\mathbb{A}^{i}_{A}, \mathcal{H}^{s}(\mathbb{Z}/m(n)))
\end{equation*}
and so the sequence
\begin{equation*}
0\to  
\HO^{s}_{\Zar}(A, \mathbb{Z}/m(n))
\to
\HO^{s}_{\Zar}(
k(\mathbb{A}^{i}_{A}),
\mathbb{Z}/m(n)
)
\to
\bigoplus_{x\in (\mathbb{A}_{A}^{i})^{(1)}}
\HO^{s-1}_{\Zar}(\kappa(x), \mathbb{Z}/m(n-1)) \nonumber  
\end{equation*}
is exact by \cite[p.774, Theorem 1.2.5]{Ge}. This completes the proof.
\end{proof}
\begin{cor}\upshape
Let $A$ and $m$ be same as in
Corollary \ref{Afex}.
Then we have
\begin{equation*}
\HO^{j}_{\Zar}(
\mathbb{P}^{i}_{A}, 
R^{s}\epsilon_{*}\mathbb{Z}/m(n)
)
=
\HO_{\et}^{s-j}(
A,
\mathbb{Z}/m(n-j)
)
\end{equation*}
for $s\leq n$ and $j\leq i$. 
\end{cor}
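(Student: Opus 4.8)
The plan is to identify the group in question with the cohomology of a Cousin complex and then to induct on $i$ using the stratification $\mathbb{P}^{i}_{A}=\mathbb{A}^{i}_{A}\sqcup\mathbb{P}^{i-1}_{A}$, feeding Corollary~\ref{Afex} into the open stratum. Since $s\le n$, the Beilinson--Lichtenbaum isomorphism gives $R^{s}\epsilon_{*}\mathbb{Z}/m(n)=\mathcal{H}^{s}(\mathbb{Z}/m(n)_{\Zar})$, and by Lemma~\ref{Zeh}(i)---whose proof relies only on purity (\cite[p.774, Theorem 1.2.1]{Ge}) and the Beilinson--Lichtenbaum isomorphism, and hence applies verbatim when $\operatorname{char}(A)=p>0$---we have $\HO^{j}_{\Zar}(\mathbb{P}^{i}_{A},R^{s}\epsilon_{*}\mathbb{Z}/m(n))=\HO^{j}(\BO^{s-n,n}_{m}(\mathbb{P}^{i}_{A})^{\bullet})$. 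By cohomological purity (\cite[p.241, VI, Theorem 5.1]{M}) the $j$-th term of this complex is $\bigoplus_{x\in(\mathbb{P}^{i}_{A})^{(j)}}\HO^{s-j}_{\et}(\kappa(x),\mathbb{Z}/m(n-j))$, so $\BO^{s-n,n}_{m}(\mathbb{P}^{i}_{A})^{\bullet}$ is exactly the Gersten complex for the \'{e}tale motivic cohomology of $\mathbb{P}^{i}_{A}$.

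Next I would set up the localization sequence for this Cousin complex attached to the closed immersion $i_{m}\colon Z=\mathbb{P}^{i-1}_{A}\hookrightarrow\mathbb{P}^{i}_{A}$ with open complement $U=\mathbb{A}^{i}_{A}$. A point of $Z$ specializes only inside $Z$, so the summands indexed by $Z$ form a subcomplex; since $Z$ has codimension one, purity together with the compatibility of Gysin maps with residues (\cite[Part 1, \S1]{C-H-K}, \cite[p.540, Theorem 4.4.7]{SaP}) identifies it with $\BO^{s-n,n-1}_{m}(\mathbb{P}^{i-1}_{A})^{\bullet}[-1]$, while the quotient is $\BO^{s-n,n}_{m}(\mathbb{A}^{i}_{A})^{\bullet}$. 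Thus I obtain a short exact sequence of complexes
\begin{equation*}
0\to\BO^{s-n,n-1}_{m}(\mathbb{P}^{i-1}_{A})^{\bullet}[-1]\to\BO^{s-n,n}_{m}(\mathbb{P}^{i}_{A})^{\bullet}\to\BO^{s-n,n}_{m}(\mathbb{A}^{i}_{A})^{\bullet}\to 0,
\end{equation*}
whose long exact sequence drives the induction. The drop of the twist from $n$ to $n-1$ on the closed part is precisely the Gysin shift, and since $s-1\le n-1$ the complex $\BO^{s-n,n-1}_{m}(\mathbb{P}^{i-1}_{A})^{\bullet}$ still lies in the range of Lemma~\ref{Zeh}(i).

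I would then induct on $i$. By Corollary~\ref{Afex} the Gersten complex $\BO^{s-n,n}_{m}(\mathbb{A}^{i}_{A})^{\bullet}$ is a resolution, so its cohomology vanishes in positive degrees and equals $\HO^{s}_{\Zar}(A,\mathbb{Z}/m(n))$ in degree $0$; moreover $\HO^{s}_{\Zar}(A,\mathbb{Z}/m(n))=\HO^{s}_{\et}(A,\mathbb{Z}/m(n))$ for $s\le n$, because the triangle $\tau_{\le n}R\epsilon_{*}\mathbb{Z}/m(n)\to R\epsilon_{*}\mathbb{Z}/m(n)\to\tau_{\ge n+1}R\epsilon_{*}\mathbb{Z}/m(n)$ together with $\operatorname{dim}(A)\le 1$ forces $\HO^{a}_{\Zar}(A,\tau_{\ge n+1}R\epsilon_{*}\mathbb{Z}/m(n))=0$ for $a\le n$. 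Feeding this and the inductive hypothesis for $\mathbb{P}^{i-1}_{A}$ (parameters $s-1\le n-1$, $j-1\le i-1$) into the long exact sequence, for $j\ge 2$ the $\mathbb{A}^{i}_{A}$-terms vanish on both sides and I get directly $\HO^{j}(\BO^{s-n,n}_{m}(\mathbb{P}^{i}_{A})^{\bullet})\simeq\HO^{j-1}(\BO^{s-n,n-1}_{m}(\mathbb{P}^{i-1}_{A})^{\bullet})=\HO^{s-j}_{\et}(A,\mathbb{Z}/m(n-j))$, as wanted; the base case $i=0$ is the identity $\HO^{0}_{\Zar}(\spec(A),R^{s}\epsilon_{*}\mathbb{Z}/m(n))=\HO^{s}_{\et}(A,\mathbb{Z}/m(n))$ just recorded.

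The formal localization is routine; the one genuinely new input, and the main obstacle, lies in the degrees $j=0,1$, where the connecting homomorphism
\begin{equation*}
\partial_{0}\colon \HO^{s}_{\et}(A,\mathbb{Z}/m(n))=\HO^{0}(\BO^{s-n,n}_{m}(\mathbb{A}^{i}_{A})^{\bullet})\to\HO^{0}(\BO^{s-n,n-1}_{m}(\mathbb{P}^{i-1}_{A})^{\bullet})=\HO^{s-1}_{\et}(A,\mathbb{Z}/m(n-1))
\end{equation*}
intervenes, and I must show it vanishes. Unwinding the snake map, $\partial_{0}$ sends a class to its residue along the generic point $\eta_{Z}$ of the hyperplane at infinity $Z=\mathbb{P}^{i-1}_{A}$. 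By Corollary~\ref{Afex} every element of the source is a pullback $f^{*}\beta$ of some $\beta\in\HO^{s}_{\et}(A,\mathbb{Z}/m(n))$ along $f\colon\mathbb{P}^{i}_{A}\to\spec(A)$; such a class is globally defined on $\mathbb{P}^{i}_{A}$, hence extends to the discrete valuation ring $\mathcal{O}_{\mathbb{P}^{i}_{A},\eta_{Z}}$, so its residue is zero and $\partial_{0}=0$. The long exact sequence then gives $\HO^{0}(\BO^{s-n,n}_{m}(\mathbb{P}^{i}_{A})^{\bullet})=\HO^{s}_{\et}(A,\mathbb{Z}/m(n))$ and $\HO^{1}(\BO^{s-n,n}_{m}(\mathbb{P}^{i}_{A})^{\bullet})=\HO^{s-1}_{\et}(A,\mathbb{Z}/m(n-1))$, closing the cases $j=0,1$ and completing the induction, so that $\HO^{j}_{\Zar}(\mathbb{P}^{i}_{A},R^{s}\epsilon_{*}\mathbb{Z}/m(n))=\HO^{s-j}_{\et}(A,\mathbb{Z}/m(n-j))$ for all $0\le j\le i$.
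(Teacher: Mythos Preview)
Your proof is correct and follows essentially the same approach as the paper: both use the stratification $\mathbb{P}^{i}_{A}=\mathbb{A}^{i}_{A}\sqcup\mathbb{P}^{i-1}_{A}$ to obtain the short exact sequence of Cousin complexes, feed in Corollary~\ref{Afex} for the open stratum, and induct. The only difference is in the treatment of degrees $j=0,1$: the paper computes $\HO^{0}_{\Zar}(\mathbb{P}^{i'}_{A},R^{s'}\epsilon_{*}\mathbb{Z}/m(n'))$ directly from the affine open cover (each chart contributes $\HO^{s'}_{\et}(A,\mathbb{Z}/m(n'))$ by Corollary~\ref{Afex}, and these glue), which \emph{a posteriori} forces $\partial_{0}=0$; you instead argue $\partial_{0}=0$ first by observing that every class in $\HO^{0}(\BO^{s-n,n}_{m}(\mathbb{A}^{i}_{A})^{\bullet})$ is the restriction of a pullback $f^{*}\beta$ defined on all of $\mathbb{P}^{i}_{A}$, and then read off both $\HO^{0}$ and $\HO^{1}$ from the long exact sequence. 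These are logically equivalent ways to close the same loop, so your argument is a faithful (and more explicit) unpacking of the paper's terse one.
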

\begin{proof}\upshape
Since
\begin{equation*}
(\mathbb{P}_{A}^{i})^{(j)}
=(\mathbb{P}_{A}^{i-1})^{(j-1)}
\oplus
(\mathbb{A}_{A}^{i})^{(j)},
\end{equation*}
we have
\begin{equation*}
\HO^{j}_{\Zar}(
\mathbb{P}^{i}_{A}, 
R^{s}\epsilon_{*}
\mathbb{Z}/m(n)
)
=\HO_{\Zar}^{0}(
\mathbb{P}^{i-j}_{A},
R^{s-j}\epsilon_{*}
\mathbb{Z}/m(n-j)
)
\end{equation*}
by Corollary \ref{Afex}. Since $\mathbb{P}^{i}_{A}$
is covered by open sets
$\mathbb{A}^{i}_{A}$, we have
\begin{equation*}
\HO_{\Zar}^{0}(
\mathbb{P}^{i-j}_{A},
R^{s-j}\epsilon_{*}
\mathbb{Z}/m(n-j)
)
=
\HO^{s-j}_{\et}(
A, \mathbb{Z}/m(n-j)
)
\end{equation*}
by Corollary \ref{Afex}.
This completes the proof.
\end{proof}
\begin{prop}\upshape\label{A1e}
Let $k$ be a field of characteristic $p>0$.
Then the sequence
\begin{align*}
0
&\to
\HO^{1}_{\et}\left(
\mathbb{A}^{i}_{k},
\nu_{r}^{n}
\right)
\to
\HO^{1}_{\et}\left(
k(\mathbb{A}_{k}^{i}),
\nu_{r}^{n}
\right)
\to
\bigoplus_{
x\in (\mathbb{A}_{k}^{i})^{(1)}
}
\HO_{x}^{2}((\mathbb{A}_{k}^{i})_{\et}, \nu_{r}^{n})
\\
&\to 
\bigoplus_{
x\in (\mathbb{A}_{k}^{i})^{(2)}
}
\HO_{x}^{3}((\mathbb{A}_{k}^{i})_{\et}, \nu_{r}^{n})
\to\cdots 
\end{align*}
is exact for $r>0$.
\end{prop}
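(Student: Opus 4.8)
The plan is to identify the displayed sequence with an augmented Cousin complex and to reduce its exactness to the projective space computation of Corollary \ref{HBP1}. First I would apply the Geisser--Levine isomorphism $\mathbb{Z}/p^{r}(n)^{X}_{\et}\simeq\nu^{n}_{r}[-n]$ (\cite[p.787, \S 5, (12)]{Ge}), which gives $\HO^{q}_{x}((\mathbb{A}^{i}_{k})_{\et},\nu^{n}_{r})=\HO^{n+q}_{x}((\mathbb{A}^{i}_{k})_{\et},\mathbb{Z}/p^{r}(n))$. Under this identification the part of the sequence after $\HO^{1}_{\et}(\mathbb{A}^{i}_{k},\nu^{n}_{r})$ is exactly the row $E_{1}^{\bullet,1}$ of the coniveau spectral sequence of Theorem \ref{sp} for $\mathbb{Z}/p^{r}(n)_{\et}$, that is the Cousin complex $\BO^{1,n}_{p^{r}}(\mathbb{A}^{i}_{k})^{\bullet}$ of Definition \ref{DefBO}, augmented by $\HO^{1}_{\et}(\mathbb{A}^{i}_{k},\nu^{n}_{r})=\HO^{n+1}_{\et}(\mathbb{A}^{i}_{k},\mathbb{Z}/p^{r}(n))$. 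So I must show that this augmented complex is exact.

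Next I would produce a flasque resolution exactly as in Lemma \ref{Zeh}, but replacing Theorem \ref{main} by Shiho's Gersten conjecture for $\nu^{n}_{r}$ over fields of characteristic $p$ (\cite[p.600, Theorem 4.1]{Sh}): local exactness of the Cousin complex at each local ring $\mathcal{O}_{X,x}$ of a smooth $k$-algebra makes $\bigoplus_{x}(i_{x})_{*}\HO^{\bullet+1}_{x}(-,\nu^{n}_{r})$ a flasque resolution of $R^{n+1}\epsilon_{*}\mathbb{Z}/p^{r}(n)=R^{1}\epsilon_{*}\nu^{n}_{r}$, so that $\HO^{j}(\BO^{1,n}_{p^{r}}(X)^{\bullet})=\HO^{j}_{\Zar}(X,R^{n+1}\epsilon_{*}\mathbb{Z}/p^{r}(n))$ for every smooth $X$ over $k$. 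Applying this to $X=\mathbb{P}^{i}_{k}$ and invoking Corollary \ref{HBP1} gives $\HO^{j}(\BO^{1,n}_{p^{r}}(\mathbb{P}^{i}_{k})^{\bullet})=\HO^{n-j+1}_{\et}(k,\mathbb{Z}/p^{r}(n-j))$ for $0\leq j\leq i$, and similarly $\HO^{j}(\BO^{1,n-1}_{p^{r}}(\mathbb{P}^{i-1}_{k})^{\bullet})=\HO^{n-j}_{\et}(k,\mathbb{Z}/p^{r}(n-1-j))$ for $0\leq j\leq i-1$.

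To descend to the affine space I would use the stratification $\mathbb{A}^{i}_{k}=\mathbb{P}^{i}_{k}\setminus\mathbb{P}^{i-1}_{k}$. Splitting the Cousin complex of $\mathbb{P}^{i}_{k}$ by coniveau and applying the Gysin purity isomorphism for the codimension-one immersion $i_{i}:\mathbb{P}^{i-1}_{k}\to\mathbb{P}^{i}_{k}$ yields a short exact sequence of complexes $0\to\BO^{1,n-1}_{p^{r}}(\mathbb{P}^{i-1}_{k})^{\bullet}[-1]\to\BO^{1,n}_{p^{r}}(\mathbb{P}^{i}_{k})^{\bullet}\to\BO^{1,n}_{p^{r}}(\mathbb{A}^{i}_{k})^{\bullet}\to 0$. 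In the associated long exact sequence the map $\HO^{j-1}(\BO^{1,n-1}_{p^{r}}(\mathbb{P}^{i-1}_{k})^{\bullet})\to\HO^{j}(\BO^{1,n}_{p^{r}}(\mathbb{P}^{i}_{k})^{\bullet})$ induced by the inclusion of the subcomplex is the Gysin pushforward $(i_{i})_{*}$, which by Lemma \ref{GyC} and Corollary \ref{ProjC} is cup product with the hyperplane class $\xi$; under the identifications of the previous paragraph both groups are $\HO^{n-j+1}_{\et}(k,\mathbb{Z}/p^{r}(n-j))$ and this map is an isomorphism for $1\leq j\leq i$. Exactness of the long exact sequence then forces $\HO^{j}(\BO^{1,n}_{p^{r}}(\mathbb{A}^{i}_{k})^{\bullet})=0$ for $j\geq 1$ and $\HO^{0}(\BO^{1,n}_{p^{r}}(\mathbb{A}^{i}_{k})^{\bullet})=\HO^{n+1}_{\et}(k,\mathbb{Z}/p^{r}(n))$.

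It remains to settle exactness at the first two terms: injectivity of $\HO^{1}_{\et}(\mathbb{A}^{i}_{k},\nu^{n}_{r})\to\HO^{1}_{\et}(k(\mathbb{A}^{i}_{k}),\nu^{n}_{r})$ is the codimension-zero case of the flasque resolution (the sheaf $R^{1}\epsilon_{*}\nu^{n}_{r}$ injects into its generic stalk), and exactness at $\HO^{1}_{\et}(k(\mathbb{A}^{i}_{k}),\nu^{n}_{r})$ amounts to identifying $\HO^{1}_{\et}(\mathbb{A}^{i}_{k},\nu^{n}_{r})$ with $\HO^{0}(\BO^{1,n}_{p^{r}}(\mathbb{A}^{i}_{k})^{\bullet})=\Gamma(\mathbb{A}^{i}_{k},R^{1}\epsilon_{*}\nu^{n}_{r})$ through the edge map of the Leray spectral sequence, which reduces to the homotopy invariance $\HO^{n+1}_{\et}(\mathbb{A}^{i}_{k},\mathbb{Z}/p^{r}(n))=\HO^{n+1}_{\et}(k,\mathbb{Z}/p^{r}(n))$. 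I expect the main obstacle to be the verification that the inclusion of the subcomplex really is the Gysin map and that, under Corollary \ref{HBP1}, it coincides with the isomorphism given by multiplication by $\xi$ in the projective bundle decomposition; once this compatibility of purity isomorphisms is checked, the vanishing in positive coniveau degrees is immediate, and the first-two-terms exactness is the comparatively routine edge-map computation.
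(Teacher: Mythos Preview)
Your approach has a genuine error. The final step invokes ``homotopy invariance $\HO^{n+1}_{\et}(\mathbb{A}^{i}_{k},\mathbb{Z}/p^{r}(n))=\HO^{n+1}_{\et}(k,\mathbb{Z}/p^{r}(n))$'', but this is false in characteristic $p$: the \'etale sheaf $\nu^{n}_{r}$ does \emph{not} satisfy $\mathbb{A}^{1}$-invariance, as the paper itself shows in Proposition~\ref{infh} and the closing Remark. In fact $\HO^{0}(\BO^{1,n}_{p^{r}}(\mathbb{A}^{i}_{k}))=\Gamma(\mathbb{A}^{i}_{k},R^{1}\epsilon_{*}\nu^{n}_{r})=\HO^{1}_{\et}(\mathbb{A}^{i}_{k},\nu^{n}_{r})$ (the last equality uses the Leray spectral sequence together with Proposition~\ref{AHV}, i.e.\ homotopy invariance of \emph{Zariski} motivic cohomology), and this is strictly larger than $\HO^{1}_{\et}(k,\nu^{n}_{r})=\HO^{0}(\BO^{1,n}_{p^{r}}(\mathbb{P}^{i}_{k}))$ in general. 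Consequently your long-exact-sequence computation giving $\HO^{0}(\BO^{1,n}_{p^{r}}(\mathbb{A}^{i}_{k}))=\HO^{n+1}_{\et}(k,\mathbb{Z}/p^{r}(n))$ cannot be correct, which means the map $\HO^{j-1}(\BO^{1,n-1}_{p^{r}}(\mathbb{P}^{i-1}_{k}))\to\HO^{j}(\BO^{1,n}_{p^{r}}(\mathbb{P}^{i}_{k}))$ is \emph{not} an isomorphism for $j=1$. Your identification of this map with cup product by $\xi$ is the unjustified step: Lemma~\ref{GyC} and Corollary~\ref{ProjC} describe the Gysin map on total \'etale cohomology, not on the Zariski cohomology of $R^{n+1}\epsilon_{*}\mathbb{Z}/p^{r}(n)$, and the required compatibility under the identifications of Corollary~\ref{HBP1} is neither stated nor true.

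The paper's argument avoids projective space entirely. Because $\operatorname{cd}_{p}$ of a characteristic-$p$ scheme is at most $1$ (\cite[Expos\'e X, Th\'eor\`eme 5.1]{SGA4}), the Beilinson--Lichtenbaum triangle collapses to
\[
\cdots\to\mathbb{Z}/p^{r}(n)_{\Zar}\to R\epsilon_{*}\mathbb{Z}/p^{r}(n)_{\et}\to R^{n+1}\epsilon_{*}\mathbb{Z}/p^{r}(n)[-n-1]\to\cdots.
\]
Taking hypercohomology on $\mathbb{A}^{i}_{k}$ and using $\HO^{s}_{\Zar}(\mathbb{A}^{i}_{k},\mathbb{Z}/p^{r}(n))=0$ for $s>n$ (this is where \cite[Corollary 4.4]{Ge} together with $\mathbb{A}^{1}$-invariance of motivic cohomology, equivalently Proposition~\ref{AHV}, enters) and $\HO^{s}_{\et}(\mathbb{A}^{i}_{k},\mathbb{Z}/p^{r}(n))=0$ for $s>n+1$, one reads off directly that $\HO^{j}_{\Zar}(\mathbb{A}^{i}_{k},R^{n+1}\epsilon_{*}\mathbb{Z}/p^{r}(n))=0$ for $j>0$ and equals $\HO^{n+1}_{\et}(\mathbb{A}^{i}_{k},\mathbb{Z}/p^{r}(n))$ for $j=0$. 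Combined with the flasque-resolution identification you already set up, this is the proposition.
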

\begin{proof}\upshape
We have a distinguished triangle
\begin{equation*}
\cdots
\to \mathbb{Z}/p^{r}(n)_{\Zar}
\to R\epsilon_{*}\mathbb{Z}/p^{r}(n)_{\et}
\to R^{n+1}\epsilon_{*}\mathbb{Z}/p^{r}(n)_{\et}[-n-1]
\to \cdots
\end{equation*}
by \cite[p.774, Theorem 1.2.2]{Ge}. Hence we have
\begin{equation*}
\HO^{n+1}_{\et}(\mathbb{A}_{k}^{i}, \nu^{n}_{r})
=\HO^{0}\left(
\mathbb{A}_{k}^{i}, R^{n+1}\epsilon_{*}\mathbb{Z}/p^{r}(n)
\right)    
\end{equation*}
and
\begin{equation*}
\HO^{j}
\left(
\mathbb{A}_{k}^{i}, R^{n+1}\epsilon_{*}\mathbb{Z}/p^{r}(n)
\right)   
=0   
\end{equation*}
for $j>0$ by 
\cite[p.786, Corollary 4.4]{Ge}
and 
\cite[Expos\'{e} X, Th\'{e}or\`{e}me 5.1]{SGA4}.
\end{proof}

The following is a generalization of a result of S.Yuan (cf. 
\cite{Y}, \cite[p.153, IV, Exercise 2.20 (d)]{M}):
\begin{prop}\upshape\label{infh}
For a field $k$ of positive characteristic $p>0$, there is an exact sequence
\begin{equation}\label{A1h}
0\to 
\HO^{1}_{\et}(\mathbb{A}^{1}_{k}, \nu_{r}^{n})
\to
\HO^{1}_{\et}(\tilde{K}, \nu_{r}^{n})
\to
\HO^{1}_{\et}(k, \nu_{r}^{n-1})
\to 0
\end{equation}
where $\tilde{K}$ is the henselization of $k(\mathbb{P}^{1}_{k})$ at the point at
infinity.
\end{prop}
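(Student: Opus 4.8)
The plan is to realise the sequence as the splice of two localisation (Gysin) sequences sharing a common local-cohomology term. Write $j\colon \mathbb{A}^{1}_{k}\hookrightarrow\mathbb{P}^{1}_{k}$ for the open complement of the point at infinity $\infty$, whose residue field is $k$, and let $\mathcal{O}^{h}$ be the henselisation of $\mathbb{P}^{1}_{k}$ at $\infty$, so $\tilde{K}$ is the fraction field of $\mathcal{O}^{h}$ and $k$ its residue field. Using $\mathbb{Z}/p^{r}(n)_{\et}\simeq\nu_{r}^{n}[-n]$ over smooth $k$-schemes (\cite[p.787, \S 5, (12)]{Ge}) and the Gysin isomorphism at the codimension-one point $\infty$, one has the purity identification $\HO^{m}_{\infty}((\mathbb{P}^{1}_{k})_{\et},\nu_{r}^{n})\simeq\HO^{m-1}_{\et}(k,\nu_{r}^{n-1})$, and by excision this same group computes $\HO^{m}_{\{\infty\}}(\mathcal{O}^{h}_{\et},\nu_{r}^{n})$. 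First I would write down the localisation sequence of the pair $(\mathbb{P}^{1}_{k},\infty)$ and that of the henselian trait $\spec(\mathcal{O}^{h})$, and observe that the restriction maps $\HO^{m}_{\et}(\mathbb{P}^{1}_{k},\nu_{r}^{n})\to\HO^{m}_{\et}(\mathcal{O}^{h},\nu_{r}^{n})$ and $\HO^{m}_{\et}(\mathbb{A}^{1}_{k},\nu_{r}^{n})\to\HO^{m}_{\et}(\tilde{K},\nu_{r}^{n})$ turn these into a morphism of long exact sequences inducing the identity on the shared term.

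The two maps of the asserted sequence are then the restriction $\HO^{1}_{\et}(\mathbb{A}^{1}_{k},\nu_{r}^{n})\to\HO^{1}_{\et}(\tilde{K},\nu_{r}^{n})$ and the residue $\partial\colon\HO^{1}_{\et}(\tilde{K},\nu_{r}^{n})\to\HO^{1}_{\et}(k,\nu_{r}^{n-1})$ coming from the trait sequence. To see that this is a complex I would identify the composite with the residue at $\infty$ of a class in $\HO^{1}_{\et}(\mathbb{A}^{1}_{k},\nu_{r}^{n})$, viewed inside $\HO^{1}_{\et}(k(\mathbb{P}^{1}_{k}),\nu_{r}^{n})$, and argue that this residue vanishes: this is exactly the statement that the boundary $\HO^{1}_{\et}(\mathbb{A}^{1}_{k},\nu_{r}^{n})\to\HO^{2}_{\infty}((\mathbb{P}^{1}_{k})_{\et},\nu_{r}^{n})\simeq\HO^{1}_{\et}(k,\nu_{r}^{n-1})$ is zero. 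Indeed, by Lemma \ref{H2P1} and the projective-bundle description of Corollary \ref{HBP1} the pushforward $(i_{\infty})_{*}$ realises the isomorphism $\HO^{1}_{\et}(k,\nu_{r}^{n-1})\xrightarrow{\sim}\HO^{2}_{\et}(\mathbb{P}^{1}_{k},\nu_{r}^{n})$, so the map $\HO^{2}_{\infty}\to\HO^{2}_{\et}(\mathbb{P}^{1}_{k},\nu_{r}^{n})$ is injective, and since $\HO^{2}_{\et}(\mathbb{A}^{1}_{k},\nu_{r}^{n})=0$ the boundary must vanish.

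For exactness I would assemble the vanishing inputs and run a diagram chase. As $\tilde{K}$ is a field of characteristic $p$, one has $\HO^{m}_{\et}(\tilde{K},\nu_{r}^{n})=0$ for $m\geq 2$ (the $p$-cohomological dimension bound used in the proof of Lemma \ref{Vm2m}); and as $\mathcal{O}^{h}$ is a one-dimensional regular local ring over $k$, the Gersten resolution for $\nu_{r}^{n}$ (the henselian-local instance of \cite[p.600, Theorem 4.1]{Sh} and Proposition \ref{A1e}, obtained as a filtered colimit over \'{e}tale neighbourhoods) gives $\HO^{m}_{\et}(\mathcal{O}^{h},\nu_{r}^{n})=0$ for $m\geq 2$, whence $\partial$ is surjective. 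Feeding $\HO^{2}_{\et}(\mathbb{A}^{1}_{k},\nu_{r}^{n})=0$, the vanishing boundary above, and the purity identifications into the comparison of the two localisation sequences, the restriction maps reduce the entire problem to showing that $\HO^{1}_{\et}(\mathbb{P}^{1}_{k},\nu_{r}^{n})\to\HO^{1}_{\et}(\mathcal{O}^{h},\nu_{r}^{n})$ is an isomorphism onto $\ker(\partial)$; injectivity and surjectivity of the map in the Proposition then follow formally, since $j^{*}\colon\HO^{1}_{\et}(\mathbb{P}^{1}_{k},\nu_{r}^{n})\to\HO^{1}_{\et}(\mathbb{A}^{1}_{k},\nu_{r}^{n})$ is an isomorphism (by the vanishing of the residue at infinity, both sides are the kernel of the total residue inside $\HO^{1}_{\et}(k(\mathbb{P}^{1}_{k}),\nu_{r}^{n})$).

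The main obstacle is this last identification: that restriction to the henselisation at infinity carries the everywhere-unramified global group $\HO^{1}_{\et}(\mathbb{P}^{1}_{k},\nu_{r}^{n})$ isomorphically onto the locally-unramified group $\HO^{1}_{\et}(\mathcal{O}^{h},\nu_{r}^{n})=\ker(\partial)$. I would establish it by combining the $\mathbb{P}^{1}$-homotopy isomorphism $\HO^{1}_{\et}(\mathbb{P}^{1}_{k},\nu_{r}^{n})\simeq\HO^{1}_{\et}(k,\nu_{r}^{n})$ of Corollary \ref{HBP1} with henselian invariance $\HO^{1}_{\et}(\mathcal{O}^{h},\nu_{r}^{n})\simeq\HO^{1}_{\et}(k,\nu_{r}^{n})$ for the logarithmic de Rham--Witt sheaf over the henselian trait, and by checking that both isomorphisms are compatible with pullback along $\spec(k)\to\spec(\mathcal{O}^{h})$ and $\spec(k)\to\mathbb{P}^{1}_{k}$. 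The delicate point --- namely that the unramified part of $\HO^{1}_{\et}(\tilde{K},\nu_{r}^{n})$ is genuinely pulled back from the possibly imperfect residue field $k$, which is what makes $\HO^{1}_{\et}(\mathbb{A}^{1}_{k},\nu_{r}^{n})$ large in the spirit of Yuan's theorem --- is where the real work of the argument lies.
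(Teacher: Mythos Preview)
Your argument contains a genuine error at its core: the purity identification $\HO^{m}_{\infty}((\mathbb{P}^{1}_{k})_{\et},\nu_{r}^{n})\simeq\HO^{m-1}_{\et}(k,\nu_{r}^{n-1})$ is \emph{false} for $m=2$. The Gysin morphism $\nu_{r}^{n-1}[-1]\to Ri^{!}\nu_{r}^{n}$ exists (cf.\ \cite{Gr}), but it is only an isomorphism in the range corresponding to motivic-cohomology degrees $\leq n+1$; in the paper's notation this is the statement that $\HO^{t+i}_{x}(X_{\et},\mathbb{Z}/p^{r}(n))\simeq\HO^{t-i}_{\et}(\kappa(x),\mathbb{Z}/p^{r}(n-i))$ holds for $t\leq n$ only. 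The group $\HO^{2}_{\infty}((\mathbb{P}^{1}_{k})_{\et},\nu_{r}^{n})$ lives at $t=n+1$ and is genuinely larger: by excision and the exact sequence~(\ref{hep}) of the paper it equals $\HO^{1}_{\et}(\tilde{K},\nu_{r}^{n})/\HO^{1}_{\et}(k,\nu_{r}^{n})$, which in equal characteristic~$p$ carries all of the wild ramification at~$\infty$ and is typically infinite-dimensional over $\mathbb{F}_{p}$ even when $\HO^{1}_{\et}(k,\nu_{r}^{n-1})$ is small.

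Every consequence you draw from this purity then fails. The map $\HO^{2}_{\infty}\to\HO^{2}_{\et}(\mathbb{P}^{1}_{k},\nu_{r}^{n})$ is surjective but has large kernel, so the boundary $\HO^{1}_{\et}(\mathbb{A}^{1}_{k},\nu_{r}^{n})\to\HO^{2}_{\infty}$ is \emph{not} zero, and $j^{*}\colon\HO^{1}_{\et}(\mathbb{P}^{1}_{k},\nu_{r}^{n})\to\HO^{1}_{\et}(\mathbb{A}^{1}_{k},\nu_{r}^{n})$ is \emph{not} an isomorphism. Indeed, the latter claim would contradict the very content of the Proposition: by Corollary~\ref{HBP1} the source is just $\HO^{1}_{\et}(k,\nu_{r}^{n})$, whereas the whole point (generalising Yuan) is that $\HO^{1}_{\et}(\mathbb{A}^{1}_{k},\nu_{r}^{n})$ is strictly larger, e.g.\ $\operatorname{Br}(\mathbb{A}^{1}_{k})[p]\neq\operatorname{Br}(k)[p]$ for $n=1$. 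The paper avoids this trap entirely: it never attempts to compute $\HO^{2}_{\infty}$, but instead lets $C=\operatorname{Im}\bigl(\HO^{1}_{\et}(\mathbb{A}^{1}_{k})\to\HO^{2}_{\infty}\bigr)$, identifies the quotient $\HO^{2}_{\infty}/C$ with $\HO^{2}_{\et}(\mathbb{P}^{1}_{k},\nu_{r}^{n})$ via the localisation sequence, applies the snake lemma to the diagram~(\ref{ti}) using the section $\spec(k)\to\mathbb{P}^{1}_{k}$ at~$\infty$ to see that the leftmost vertical map is surjective, and finally invokes Lemma~\ref{H2P1} to recognise $\HO^{2}_{\et}(\mathbb{P}^{1}_{k},\nu_{r}^{n})$ as $\HO^{1}_{\et}(k,\nu_{r}^{n-1})$.
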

\begin{proof}
By Proposition \ref{A1e} and 
Corollary \ref{HBP1}, 
the sequence
\begin{equation*}
0\to 
\HO^{1}_{\et}(k, \nu_{r}^{n})
\to
\HO^{1}_{\et}(\mathbb{A}^{1}_{k}, \nu_{r}^{n})
\to
\HO^{2}_{\infty}\left(
(\mathbb{P}^{1}_{k})_{\et}, \nu_{r}^{n}
\right)
\end{equation*}
is exact. Moreover, the sequence
\begin{equation}\label{hep}
0\to 
\HO^{1}_{\et}(k, \nu_{r}^{n})
\to
\HO^{1}_{\et}(\tilde{K}, \nu_{r}^{n})
\to
\HO^{2}_{\infty}\left(
(\mathbb{P}^{1}_{k})_{\et}, \nu_{r}^{n}
\right)
\to 0
\end{equation}
is exact by \cite[Theorem 5.3]{Sak}, 
\cite[Expos\'{e} X, Th\'{e}or\`{e}me 5.1]{SGA4}
and \cite[p.93, III, Corollary 1.28]{M}. 
Hence the first map in the sequence (\ref{A1h}) 
is injective by the snake lemma.
Put
\begin{equation*}
C=  
\operatorname{Im}\Bigl(
\HO^{1}_{\et}(\mathbb{A}^{1}_{k}, \nu_{r}^{n})
\to
\HO^{2}_{\infty}\left(
(\mathbb{P}^{1}_{k})_{\et}, \nu_{r}^{n}
\right)
\Bigr).
\end{equation*}
Since we have
\begin{equation*}
\HO^{2}_{\et}(\mathbb{A}_{k}^{1}, \nu_{r}^{n})
=0
\end{equation*}
by \cite[Expos\'{e} X, Th\'{e}or\`{e}me 5.1]{SGA4}, 
the sequence
\begin{equation*}
0\to 
C \to
\HO^{2}_{\infty}\left(
(\mathbb{P}^{1}_{k})_{\et}, \nu_{r}^{n}
\right)
\to
\HO^{2}_{\et}\left(
\mathbb{P}^{1}_{k}, \nu_{r}^{n}
\right)
\to 0
\end{equation*}
is exact by the definition of $C$. 
Moreover, 
we have an isomorphism
\begin{equation*}
\HO^{2}_{\et}\left(
\mathbb{P}^{1}_{k}, \nu_{r}^{n}
\right)
\simeq
\HO^{1}_{\et}(k, \nu_{r}^{n-1})   
\end{equation*}
by \cite[p.14, Corollaire 2.1.15]{Gr}.
We have a commutative diagram
\begin{equation}\label{ti}
\xymatrix{
& 
& \HO^{1}_{\et}(\mathbb{P}_{k}^{1}, \nu_{r}^{n})
\ar[r]\ar[d]
& \HO^{1}_{\et}(\mathbb{A}_{k}^{1}, \nu_{r}^{n})
\ar[r]\ar[d]
& C \ar[r]\ar[d]
& 0 \\
& 0 \ar[r]
&
\HO^{1}_{\et}(k, \nu_{r}^{n})  
\ar[r]
&
\HO^{1}_{\et}(\tilde{K}, \nu_{r}^{n})  
\ar[r]
&
\HO^{2}_{\infty}\left(
(\mathbb{P}^{1}_{k})_{\et}, \nu_{r}^{n}
\right)\ar[r]
&
0
}    
\end{equation}
where the sequences are exact
by the definition of $C$ and (\ref{hep}). 
Since the composite of the natural map 
\begin{equation*}
\HO^{1}_{\et}(k, \nu_{r}^{n})
\to
\HO^{1}_{\et}(\mathbb{P}^{1}_{k}, \nu_{r}^{n})
\end{equation*}
and
the left map in the diagram (\ref{ti}) is the identity map,
the left map in the diagram (\ref{ti}) is surjective.
Hence the statement follows from the snake lemma. 
\end{proof}
\begin{rem}\upshape
As mentioned in \cite[p.178, Introduction]{SaR}, 
the $p$-adic \'{e}tale Tate twists do not satisfy homotopy
invariance. 
Indeed, we can show this fact as follows:
Let $A$ be a henselian discrete valuation ring of
mixed characteristic $(0, p)$, 
$k$ the residue field of $A$,
$K$ the field of fractions of $A$ and
$n\geq 1$ an integer. 
Let 
$j:\mathbb{A}^{1}_{K}\to\mathbb{A}^{1}_{A}$
be the open immersion.
Suppose that $k$ is a separably closed field
and $A$ contains $p$-th roots of unity. 

Assume that the $p$-adic \'{e}tale Tate twists satisfy homotopy invariance.
Then we have
\begin{equation*}
\HO^{1}_{\et}(
\mathbb{A}^{1}_{k}, \nu_{1}^{n})
=
\HO^{n+2}_{\et}(
\mathbb{A}^{1}_{A},
j_{!}\mu_{p})
=0
\end{equation*}
by Proposition \ref{compi}
and \cite[Expos\'{e} X, Th\'{e}or\`{e}me 5.1]{SGA4}.
On the other hand,
\begin{equation*}
\HO^{1}_{\et}(
\mathbb{A}^{1}_{k}, \nu_{1}^{n})
\neq 0
\end{equation*}
if $[k:k^{p}]\geq p^{n}$
by \cite[p.48, Th\'{e}or\`{e}me 1.2]{G-O} and Proposition \ref{infh}.
Hence the $p$-adic \'{e}tale Tate twists do not satisfy homotopy
invariance in general.
\end{rem}

%
%

%

%

%
\end{document}